\numberwithin{equation}{section}
\theoremstyle{plain}
\newtheorem{theorem}{Theorem}[section]				
\newtheorem{proposition}[theorem]{Proposition}		
\newtheorem{corollary}[theorem]{Corollary}
\newtheorem{lemma}[theorem]{Lemma}
\theoremstyle{definition}
\newtheorem{definition}[theorem]{Definition}
\DeclareMathOperator{\ad}{ad}
\newcommand{\lra}{\longrightarrow}
\begin{document}

\title[Liouville symplectic form on the cotangent bundle of a
loop group]{Applications of the Liouville symplectic form on the
cotangent bundle of a loop group}
	
\author[I. Biswas]{Indranil Biswas}

\address{Department of Mathematics, Shiv Nadar University, NH91, Tehsil
Dadri, Greater Noida, Uttar Pradesh 201314, India}

\email{indranil.biswas@snu.edu.in, indranil29@gmail.com}

\author[M. Inaba]{Michi-aki Inaba}

\address{Department of Mathematical and Physical Sciences, 
Nara Women's University, Kitauoya-nishimachi, Nara 630-8506, Japan}

\email{inaba@cc.nara-wu.ac.jp}

\author[A. Komyo]{Arata Komyo}

\address{Department of Material Science, Graduate School of Science, University of Hyogo,
2167 Shosha, Himeji, Hyogo 671-2280, Japan}

\email{akomyo@sci.u-hyogo.ac.jp}

\author[S. Mukhopadhyay]{Swarnava Mukhopadhyay}

\address{School of Mathematics, Tata Institute of Fundamental Research,
Homi Bhabha Road, Mumbai 400005, India}

\email{swarnava@math.tifr.res.in}

\author[M.-H. Saito]{Masa-Hiko Saito}

\address{Department of Data Science, Faculty of Business Administration, 
Kobe Gakuin University, Minatojima, Chuou-ku, Kobe, 650-8586, Japan}

\email{mhsaito@ba.kobegakuin.ac.jp}

\subjclass[2010]{Primary: 14H60, 32G34, 53D50; Secondary: 81T40, 14F08}

\begin{abstract}
Let $G$ be a semisimple, simply connected, affine algebraic group defined over $\mathbb C$. Consider the Liouville symplectic
structure on the total space $T^*G((t))$ of the cotangent bundle of the loop group $G((t))$, where $t$ is a formal parameter. We show
that the Liouville symplectic structure on $T^*G((t))$ induces the symplectic structures on the moduli stack
of framed principal Higgs $G$--bundles on a compact connected Riemann surface $X$ and also on the moduli spaces of framed
$G$--connections on $X$. Similar symplectic structures --- on the moduli stack of framed
principal Higgs $G$--bundles, with finite order framing, and also framed connections on $X$, with finite order
framing --- were constructed earlier by various authors. Our results show that they all have a common origin.
\end{abstract}

\maketitle

\tableofcontents

\section{Introduction}

Let $X$ be a compact connected Riemann surface, or equivalently, an irreducible smooth projective curve defined over $\mathbb C$.
It's canonical line bundle will be denoted by $K_X$. Fix an effective divisor $\mathbb D$ on $X$. Let $G$ be a semisimple, simply connected, affine
algebraic group defined over $\mathbb C$. Take a principal $G$--bundle $E_G$ on the curve $X$, which is same as a holomorphic
principal $G$--bundle on the Riemann surface $X$. A Higgs field on $E_G$ is a section
$$
\theta \, \in\, H^0(X,\, \text{ad}(E_G)\otimes K_X\otimes {\mathcal O}_X({\mathbb D})),
$$
where $\text{ad}(E_G)\, \longrightarrow\, X$ is the 
adjoint vector bundle of $E_G$. A Higgs bundle is a principal $G$--bundle on $X$
equipped with a Higgs field. A Higgs bundle $(E_G,\, \theta)$ is called semistable (respectively, stable) if for
every pair $(P,\, \chi)$, where $P\, \subsetneq\, G$ is a proper parabolic subgroup and $\chi$ is a strictly anti--dominant character
of $P$ with respect to some Borel subgroup of $G$ contained in $P$ (this means that the line bundle on $G/P$ associated to $\chi$
is ample), and for every reduction of structure group $E_P\, \subset\, E_G$ to the subgroup $P$ such that
$$
\theta \, \in\, H^0(X,\, \text{ad}(E_P)\otimes K_X\otimes {\mathcal O}_X({\mathbb D}))
\,\subset\, H^0(X,\, \text{ad}(E_G)\otimes K_X\otimes {\mathcal O}_X({\mathbb D})),
$$
the inequality
$$
\text{degree}(E_P(\chi)) \, \geq\, 0 \ \ \, \text{(respectively, }\, \text{degree}(E_P(\chi)) \, >\, 0\text{)}
$$
holds, where $E_P(\chi)\, \longrightarrow\, X$ is the line bundle associated to the principal $P$--bundle $E_P$ for the
character $\chi$ of $P$.

One can construct a moduli space $M_{Higgs}(G)$ that parametrizes the stable principal Higgs $G$--bundles on $X$. When $\mathbb D\,=\,0$, this
moduli space $M_{Higgs}(G)$ carries a natural symplectic structure \cite{Hi2}. In the general case of $\mathbb D$, the moduli space
$M_{Higgs}(G)$ has a natural Poisson structure which was constructed in \cite{Bot}, \cite{Ma}.

A framed principal Higgs $G$--bundle is a Higgs bundle $(E_G,\, \theta)$ as above equipped with an enhancement given by a trivialization of
the principal $G$--bundle $E_G$ over the divisor $\mathbb D$. A framed Higgs bundle is called stable (respectively, semistable) if the underlying Higgs
bundle is stable (respectively, semistable). Let $\widetilde{M}_{Higgs}(G)$ denote the moduli space of stable framed Higgs bundles on $X$.
This moduli space $\widetilde{M}_{Higgs}(G)$ has a natural symplectic structure \cite{BLP1}, \cite{BLP2}. 
The natural projection $\widetilde{M}_{Higgs}(G)\, \longrightarrow\, M_{Higgs}(G)$ --- that simply
forgets the framing --- is
in fact a Poisson map. This result actually extends to the more general context
of parabolic Higgs bundles \cite{BLPS}.

Let $M_{Conn}(G)$ denote the moduli space of principal $G$--bundles $E_G$ on $X$ 
equipped with a meromorphic connection $\mathcal D$ whose pole is contained in the divisor 
$\mathbb D$ and satisfying an appropriate stability condition as done for the case of $M_{Higgs}(G)$.
Similar to the case of meromorphic Higgs bundles, in the special situaton where
$\mathbb D\,=\,0$, this moduli space of connections has a natural symplectic structure \cite{AB}, \cite{Go}. 
When $\mathbb{D}$ is a reduced divisor and $G\,=\,\mathrm{GL}(r,\mathbb{C})$,
the moduli space $M_{Conn}(G)$ was constructed in \cite{Nit}. This moduli space
$M_{Conn}(G)$ has a natural Poisson structure. The moduli space of parabolic connections 
(which is a parabolic version of the moduli space $M_{Conn}(G)$) 
was constructed in \cite{IIS1}, \cite{Inaba-1}. It was shown in \cite{IIS1}, \cite{Inaba-1}
that this moduli space has a natural symplectic structure. For a general $\mathbb D$, the 
moduli space $M_{Conn}(G)$ has a Poisson structure \cite{Boa1}, \cite{Boa2}.
Moduli spaces of connections with multiple pole are related to wild character
varieties via the generalized Riemann--Hilbert correspondence \cite{Boa1}, \cite{Boa2}.
From the point of view of the generalized Riemann--Hilbert correspondence,
it is reasonable to consider the moduli space of connections with parabolic structure,
which is constructed --- together with a symplectic structure ---
in \cite{IS} when we have $G\,=\,\mathrm{GL}(r,\mathbb{C})$.

One can enhance the structure of meromorphic connections using the notion of framed connections
just as it is done in the case for framed principal Higgs bundles. A framed connection is a pair
$(E_G,\, {\mathcal D})\, \in\, M_{Conn}(G)$ together with a trivialization of the principal $G$--bundle
$E_G$ over the divisor $\mathbb D$. The moduli space of framed connections $\widetilde{M}_{Conn}(G)$
 has a natural symplectic structure\cite{BIKS1}, \cite{BIKS2}. Moreover, the natural projection to
$M_{Conn}(G)$ from the moduli space $\widetilde{M}_{Conn}(G)$ of framed connections,
that simply forgets the framing, is actually a Poisson map.
Parabolic connections in \cite{IIS1}, \cite{Inaba-1} are related to framed connections.
Indeed, the parabolic structure of a parabolic connection 
is understood in terms of framing in \cite{BIKS1}, \cite{BIKS2}.
The symplectic structure on the moduli space of parabolic connections can be obtained from the
symplectic structure on $\widetilde{M}_{Conn}(G)$ by an argument involving reduction.

The non--abelian Hodge correspondence identifies the moduli space of Higgs bundles with the moduli space of 
principal bundles with algebraic connections (same as holomorphic connections) \cite{Si}, \cite{Hi1}, 
\cite{Do}, \cite{Co}. However, this identification is not algebraic or holomorphic but just $C^\infty$. 
Hence a symplectic structure on one side of the non--abelian Hodge correspondence does not automatically 
give rise to symplectic structures on the other side.

The main goal of this paper is to show that all the above symplectic structures have a single common origin. In fact,
they all originate from the Liouville 
symplectic structure on the total space $T^*G((t))$ of the cotangent bundle of the
loop group $$LG \ :=\ G((t)),$$ where $t$ is a formal parameter. Here we can think of the loop group $LG$
as the $\mathbb{C}$--valued points of a stack parametrizing the principal $G$--bundles $E_G$ on a curve $X$ with a given
trivialization of $E_G$ on a formal disk $D_p$ around $p\,\in\, X$ as 
well as a trivialization of $E_G$ on the punctured curve $X\backslash {p}$. We note that $G((t))$ is a Fr\'echet Lie group.

Throughout the rest of the paper, for the simplicity of the exposition, we will assume that the divisor above $\mathbb D$
on $X$ is a singleton, meaning $\mathbb{D}\,=\,\{p\}$, where $p$ is a point on 
$X$. The case of a general effective divisor $\mathbb{D}$ follows directly using the same methods.

Fix a point $p\, \in\, X$. Let $$L_XG \ :=\ G(X\backslash \{p\})$$ be the space of all algebraic maps from
$X\backslash \{p\}$ to the group $G$. Then the double quotient
$$
G[[t]]\backslash LG/L_XG
$$
is identified with the space of all $\mathbb{C}$--valued points of the moduli stack of principal $G$--bundles over $X$. Note that this
double quotient is constructed in the category of stacks. On the other hand, the quotient
$LG/L_XG$ is identified with the space of $\mathbb{C}$--valued points of the moduli stack of principal $G$--bundles $E_G$ over $X$ equipped
with a trivialization of $E_G$ over the formal completion $D_p$ of $X$ along $p$.

Let ${\mathcal M}_{Higgs}(G)$ denote the moduli stack of principal Higgs $G$--bundles $(E_G,\, \theta)$ on $X$ equipped with a trivialization of
the principal $G$--bundle $E_G$ over the formal completion $D_p$; the Higgs field $\theta$ on $E_G$ is allowed to have a pole at $p$
of arbitrary order, but it does not have any pole on the complement $X\setminus\{p\}$.
The right--translation of action of $L_XG$ on $LG$ produces an action of $L_XG$ on the total space
$T^*LG$ of the cotangent bundle of $LG$. The above moduli stack ${\mathcal M}_{Higgs}(G)$ is a quotient, by the action of $L_X(G)$, of a
certain $L_X(G)$--invariant subbundle
\begin{equation}\label{a1}
{\mathcal W}\ \subset T^*LG
\end{equation}
(see \eqref{e17} for the definition of $\mathcal W$).

We prove the following (see Theorem \ref{thm:main1}):

\begin{theorem}\label{thi0}
Restrict the Liouville symplectic form on $T^*LG$ to $\mathcal W$ in \eqref{a1}. This restriction descends to a
$2$--form on the quotient space ${\mathcal M}_{Higgs}(G)\,=\, {\mathcal W}/L_XG$. The $2$--form on ${\mathcal M}_{Higgs}(G)$ obtained this
way is actually a symplectic form.
\end{theorem}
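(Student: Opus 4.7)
The plan is to recognize the statement as a Marsden--Weinstein symplectic reduction carried out in a Fr\'echet/ind-group setting. The right-translation action of $L_XG$ on $LG$ lifts canonically to a symplectic action on $(T^*LG,\,\omega_{\mathrm{Liou}})$ by cotangent lift, and every cotangent-lifted action is automatically Hamiltonian. After left-trivializing $T^*LG\,\cong\,LG\times\mathfrak{g}((t))^*$ and identifying $\mathfrak{g}((t))^*$ with $\mathfrak{g}^*((t))\,dt$ via the residue pairing $\langle\alpha,\xi\rangle\,=\,\mathrm{Res}_{t=0}\,\kappa(\alpha(t),\xi(t))$ (where $\kappa$ is the Killing form of $\mathfrak{g}$, which is non-degenerate because $G$ is semisimple), the moment map $\mu\colon T^*LG\,\to\,\mathrm{Lie}(L_XG)^*$ becomes simply the restriction of the cotangent covector along the Laurent-expansion inclusion $\mathrm{Lie}(L_XG)\,=\,\mathfrak{g}(X\setminus\{p\})\,\hookrightarrow\,\mathfrak{g}((t))$.

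The next step is to identify the invariant subbundle $\mathcal{W}$ of \eqref{a1} with $\mu^{-1}(0)$. By the residue theorem, an element $\alpha\in\mathfrak{g}^*((t))\,dt$ annihilates every $\xi\in\mathfrak{g}(X\setminus\{p\})$ if and only if it arises as the Laurent expansion at $p$ of a global rational section of $\text{ad}(E_G)\otimes K_X(\ast p)$, where $E_G$ is the principal bundle determined by the underlying point of $LG/L_XG$ together with its induced framing on $D_p$. Such a global section is precisely a meromorphic Higgs field on $E_G$ with possible pole only at $p$, so the zero locus $\mu^{-1}(0)$ is naturally identified with the space of framed Higgs data defining $\mathcal{M}_{Higgs}(G)$.

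With these identifications in place, the proof follows the standard reduction template. The pullback of $\omega_{\mathrm{Liou}}$ to $\mathcal{W}\,=\,\mu^{-1}(0)$ is closed, and the defining property of a moment map shows that its degeneracy kernel consists exactly of the vectors tangent to the $L_XG$-orbits: for $\xi\in\mathrm{Lie}(L_XG)$ with associated generator $\xi^{\sharp}$, the identity $\iota_{\xi^{\sharp}}\omega_{\mathrm{Liou}}\,=\,d\mu^{\xi}$ vanishes on vectors tangent to $\mu^{-1}(0)$. Consequently the restriction descends to a closed $2$-form on $\mathcal{W}/L_XG\,=\,\mathcal{M}_{Higgs}(G)$. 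Non-degeneracy is then checked at the level of tangent spaces, where deformations of a framed Higgs bundle $(E_G,\,\theta)$ are computed by a hypercohomology complex; the induced pairing matches the Serre-duality pairing via residues at $p$, and this pairing is perfect because $\mathfrak{g}[[t]]$ and the annihilator of $\mathfrak{g}(X\setminus\{p\})$ are transverse Lagrangians for the residue form on $\mathfrak{g}((t))$.

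The main technical obstacle is handling the infinite-dimensional and stacky aspects. One must use the correct topological dual in defining $T^*LG$ so that the residue pairing is genuinely perfect on the relevant completions, and one must ensure that zero is a regular value of $\mu$ over the locus of interest so that $\mathcal{W}$ inherits a smooth structure and the quotient is a reasonable stack. Both issues are handled by exhausting $LG$ by finite-type Schubert-type pieces --- the standard affine-Grassmannian description of principal bundles on $X$ --- and by performing the reduction in the category of stacks, where the action of the ind-group $L_XG$ is well behaved and invariant basic forms descend smoothly. Once these technicalities are addressed, the symplectic form on $\mathcal{M}_{Higgs}(G)$ is recognized as the Liouville-reduced form coming from $T^*LG$, which is the content of the theorem.
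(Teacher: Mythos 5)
Your overall strategy --- symplectic reduction of $T^*LG$ at the zero level of the moment map for the lifted $L_XG$--action --- is in substance the same as the paper's. The paper avoids moment--map language and instead defines $\mathcal{V}\,=\,\omega'((\mathcal{F}^{L_XG})^{\perp})$ using the Cartan--Killing metric on $LG$, then shows the Liouville one--form descends (Lemma \ref{prop:mainprop1}) and the resulting two--form is nondegenerate (Propositions \ref{prop1} and \ref{prop:mainprop}); this $\mathcal{V}$ is exactly your $\mu^{-1}(0)$ intersected with the image of $\omega'$, so the two reductions coincide. Your framing in Marsden--Weinstein terms is a legitimate repackaging, not a different proof.

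However, there is a genuine gap at the step you dispose of with the phrase ``by the residue theorem.'' You assert that a covector $\alpha\in\mathfrak{g}((t))\,dt$ annihilates every $\xi\in\mathfrak{g}(X\setminus\{p\})$ \emph{if and only if} it is the Laurent expansion of a global section of $\mathrm{ad}(E_G)\otimes K_X(\ast p)$. The residue theorem gives only the ``if'' direction (a global meromorphic one--form has vanishing total residue). The ``only if'' direction --- that anything annihilating $\mathrm{Lie}(L_XG)$ under the residue pairing actually extends to a global meromorphic section --- is the substantive content, and it is precisely the paper's Theorem \ref{thm:residueann}, proved via the long exact cohomology sequence of $0\to\mathrm{ad}(E_G)(-m\sum Q_i)\to\mathrm{ad}(E_G)(N\sum Q_i)\to\bigoplus\mathfrak{g}\otimes\mathbb{C}\xi^k\to 0$, Serre duality, cohomology vanishing for large twists, and a truncation argument. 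Without this you cannot identify $\mathcal{W}$ with $\mu^{-1}(0)$: if $\mathcal{W}$ were a proper subspace of the annihilator, the restricted form could have null directions beyond the $L_XG$--orbit directions and the reduced form would fail to be symplectic.

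Relatedly, your nondegeneracy argument leans on the finite--dimensional reduction template, where $\ker(\omega|_{\mu^{-1}(0)})\,=\,T(\mathcal{O})$ follows from the double--annihilator identity $((T\mathcal{O})^{\omega})^{\omega}\,=\,T\mathcal{O}$. In this infinite--dimensional setting that identity is not automatic; it is exactly the assertion of Theorem \ref{thm:residueann} that $H^0(X,\mathrm{ad}(E_G)(\ast p))$ and $H^0(X,\mathrm{ad}(E_G)\otimes K_X(\ast p))$ are annihilators \emph{of each other}, and both directions of that statement are consumed in the proof (one for the inclusion $\mathcal{W}\subseteq\mu^{-1}(0)$, the other for closing up the double annihilator). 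Your appeal to ``transverse Lagrangians'' and a hypercohomology pairing gestures at this but does not supply the argument. To complete the proposal you would need to prove the annihilator theorem, at which point your proof and the paper's become essentially identical.
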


Similar to Higgs bundles, we consider the following version of moduli space of connections. Let ${\mathcal M}_{Conn}(G)$ denote the moduli
stack of pairs $(E_G,\,\nabla)$, where $E_G$ principal $G$--bundles 
on $X$ and $\nabla$ is a meromorphic connection on $E_G$, such that the principal $G$--bundle $E_G$ is equipped with a trivialization of
over the formal completion $D_p$; the connection $\nabla$ is allowed to have a pole at a fixed point $p$ on $X$
of arbitrary order while the connection is regular on the complement $X\setminus \{p\}$. As
before, the right--translation of action of $L_XG$ on $LG$ produces an action of $L_XG$ on the total space
$T^*LG$ of the cotangent bundle of $LG$. The above moduli stack ${\mathcal M}_{Conn}(G)$ is a quotient, by the action of $L_X(G)$, of an
$L_X(G)$--invariant subbundle
\begin{equation}\label{a2}
{\mathcal U}\ \subset \ T^*LG
\end{equation}
(see \eqref{e31} for $\mathcal U$).

We prove the following (see Theorem \ref{thm:main2}):

\begin{theorem}\label{thi1}
Restrict the Liouville symplectic form on $T^*LG$ to $\mathcal U$ in \eqref{a2}. This restriction descends to a $2$--form on the
quotient space ${\mathcal M}_{Conn}(G)\,=\, {\mathcal U}/L_XG$. The $2$--form on ${\mathcal M}_{Conn}(G)$ obtained this
way is actually a symplectic form.
\end{theorem}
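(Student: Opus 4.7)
The plan is to parallel the strategy of Theorem \ref{thi0}, adapting the arguments to account for the fact that the space of meromorphic connections on a principal $G$--bundle is a torsor over the space of meromorphic Higgs fields rather than a vector space; accordingly $\mathcal U$ will be an affine subbundle of $T^*LG$ rather than a linear one.

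First I would set up the standard identifications. Using right translation, trivialize $T^*LG\,\cong\, LG\times L\mathfrak g^*$, and identify $L\mathfrak g^*$ with $\mathfrak g((t))\,dt$ via the residue pairing $\langle A,\,\omega\rangle\,=\,\mathrm{Res}_{t=0}\kappa(A,\,\omega)$ for a non-degenerate $\mathrm{Ad}$--invariant bilinear form $\kappa$ on $\mathfrak g$. In these coordinates both the tautological $1$--form $\theta$ on $T^*LG$ and the Liouville form $\Omega\,=\,d\theta$ admit explicit descriptions. From the definition of $\mathcal U$ in \eqref{e31}, together with Beauville--Laszlo gluing, a point of $\mathcal U$ encodes a principal $G$--bundle $E_G$ on $X$ with a formal trivialization at $p$ together with a meromorphic connection $\nabla$ on $E_G$ that is regular on $X\setminus\{p\}$; the right translation action of $L_XG$ implements the change of trivialization of $E_G$ on $X\setminus\{p\}$ together with the induced gauge transformation of $\nabla$, so that indeed $\mathcal U/L_XG\,=\,\mathcal M_{Conn}(G)$.

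Next I would show that $\Omega|_{\mathcal U}$ descends to a $2$--form $\widetilde\Omega$ on $\mathcal M_{Conn}(G)$. The lifted right action of $L_XG$ preserves the tautological $1$--form, hence also $\Omega$, so $L_XG$--invariance is automatic; what requires work is horizontality. For $A\in L_X\mathfrak g$ the identity $\iota_{X_A}\Omega\,=\,d\mu_A$ holds, where $\mu_A$ is the corresponding moment function. In the Higgs case $\mu_A$ vanishes identically on $\mathcal W$ by the residue theorem, yielding $d\mu_A|_{T\mathcal W}\,=\,0$. In the present connection setting $\mu_A$ need not vanish on $\mathcal U$, but a similar residue computation should show that on $\mathcal U$ the value of $\mu_A$ depends only on $A$ through its polar behavior at $p$ and not on the point of $\mathcal U$, so that $d\mu_A$ again vanishes along $T\mathcal U$. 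This verifies horizontality, and $\widetilde\Omega$ is well-defined; closedness is inherited from $d\Omega\,=\,0$.

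The remaining step, and the main obstacle, is non-degeneracy of $\widetilde\Omega$. For this I would identify the tangent space to $\mathcal M_{Conn}(G)$ at a point $(E_G,\,\nabla,\,\phi)$ with the appropriate hypercohomology of the de Rham complex $\mathrm{ad}(E_G)\xrightarrow{\nabla}\mathrm{ad}(E_G)\otimes K_X(*p)$ refined by the framing datum at $p$, and then verify that $\widetilde\Omega$ coincides, up to a non-zero scalar, with the natural Serre-duality pairing on this hypercohomology. Since the latter pairing is perfect, non-degeneracy will follow. The technical subtlety is matching the residue pairings coming from $\Omega$ with the Serre-duality pairing uniformly over the moduli stack; this requires a careful \v{C}ech-style computation relating the global hypercohomology pairing to a local residue at $p$ together with a boundary contribution from $X\setminus\{p\}$, and it is here that the torsor nature of $\mathcal U$ must be handled with care.
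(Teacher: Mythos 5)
Your overall strategy---identify $\mathcal U$ with the linear subbundle $\mathcal W$ via the base-point connection $\mathcal D_\alpha$ furnished by $\alpha\in LG$, then descend the Liouville form to the quotient---matches the paper's, but there is a genuine gap at the invariance step. You write that ``the lifted right action of $L_XG$ preserves the tautological $1$--form, hence also $\Omega$, so $L_XG$--invariance is automatic.'' The action of $L_XG$ that actually realizes $\mathcal M_{Conn}(G)$ as $\mathcal U/L_XG$ is \emph{not} the standard cotangent lift of translation: under the identification $\mathcal U\cong\mathcal W\subset T^*LG$ it becomes the affine gauge action $(\alpha,\theta)\cdot g=(\alpha g,\ g^{-1}\mathcal D_\alpha(g)+\operatorname{Ad}(g)(\theta))$ of \eqref{e32}, whose inhomogeneous term $g^{-1}\mathcal D_\alpha(g)$ destroys invariance of the tautological $1$--form. (If you instead use the genuine cotangent lift, the quotient you obtain is the Higgs moduli stack of Theorem \ref{thm:main1}, not $\mathcal M_{Conn}(G)$.) What is true, and what the paper proves in Lemma \ref{lem7}, is that this affine action still preserves the $2$--form $d\theta$; the reason is specific to curves, namely that every connection on a principal bundle over $X$ is automatically flat because $\bigwedge^2K_X=0$, which kills the curvature-type term that would otherwise obstruct invariance. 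Your proposal never invokes this fact, and without it the $L_XG$--invariance of $\Omega|_{\mathcal U}$ is unjustified. Relatedly, your horizontality step (``a similar residue computation should show\dots'') is asserted rather than proved: for the affine action the fundamental vector fields acquire an extra vertical component coming from $g^{-1}\mathcal D_\alpha(g)$, and one must actually check that the contraction of $\Omega$ with these still vanishes along $T\mathcal U$.

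On nondegeneracy you propose a genuinely different route (hypercohomology of the de Rham complex plus Serre duality) from the paper's, which instead shows, via the residue-pairing annihilator theorem (Theorem \ref{thm:residueann}), that $\mathcal W$, and hence $\mathcal U$, coincides with $\omega'\bigl((\mathcal F^{L_XG})^{\perp}\bigr)$, and then applies the linear-algebra statement of Proposition \ref{prop:mainprop}. Your route is plausible, but as written it is only a plan---you yourself flag the matching of the local residue pairing with the global Serre-duality pairing as a ``technical subtlety'' without carrying it out---so the proposal does not yet establish nondegeneracy either. The paper's annihilator theorem is precisely the ingredient that discharges this step, and some substitute for it would be needed to complete your argument.
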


A key ingredient in the proofs of Theorem \ref{thi0} and Theorem \ref{thi1} is Theorem \ref{thm:residueann}. We will now
describe Theorem \ref{thm:residueann}; this theorem is in fact of independent interests.

To describe Theorem \ref{thm:residueann}, fix finitely many distinct points $\{Q_1,\, \cdots, \, Q_n\}\, \subset\, X$. Fix
a formal parameter $\xi_i$ at each $Q_i$, $1\, \leq\, i\, \leq\, n$. Take a principal $G$--bundle $E_G$ on $X$. Its adjoint
bundle will be denoted by $\ad(E_G)$. For each $1\, \leq\, i\, \leq\, n$, fix a trivialization of
$E_G$ on the formal completion $\widehat{Q}_i$ of $X$ along $Q_i$, which, in turn, gives a trivialization of the adjoint
bundle $\ad(E_G)$ on $\widehat{Q}_i$. Using these trivializations, we have
\begin{eqnarray}
H^0(X,\, \ad(E_G)(\ast \sum_{i=1}^nQ_i)) \,:=\, \lim_{j\rightarrow \infty} H^0(X,\, V\otimes{\mathcal O}_X(j\sum_{i=1}^n Q_i))
\ \hookrightarrow\ \bigoplus_{i=1}^n \mathfrak{g}\otimes \mathbb{C}((\xi_i)),\nonumber\\
H^0(X,\, \ad(E_G)\otimes K_X(\ast \sum_{i=1}^nQ_i))\,:=\,\lim_{j\rightarrow \infty}
H^0(X,\, \ad(E_G)\otimes K_X(j \sum_{i=1}^nQ_i))\ \hookrightarrow\ 
\bigoplus_{i=1}^n \mathfrak{g}\otimes \mathbb{C}((\xi_i))d\xi_i, \nonumber
\end{eqnarray}
where $\mathfrak g$ is the Lie algebra of $G$. There is a natural nondegenerate pairing 
$$
{\mathcal R}\,\, :\,\, (\bigoplus_{i=1}^n \mathfrak{g}\otimes \mathbb{C}((\xi_i)))\otimes
(\bigoplus_{i=1}^n\mathfrak{g}\otimes \mathbb{C}((\xi_i))d\xi_i)
\ \longrightarrow\ \mathbb{C}
$$
defined by $$((X_i\otimes f_i(\xi_i)_{i=1})^n),\,((Y_i\otimes g_i(\xi_i)d\xi_i)_{i=1}^n)\ \longmapsto\
\sum_{i=1}^n (X_i,Y_i)\operatorname{Res}_{\xi_i=0}(f_i(\xi_i)g_i(\xi_i)d\xi_i)$$
We refer the reader to equation \eqref{e3}.

Theorem \ref{thm:residueann}, which generalizes \cite[Theorem 1.22]{Uenobook}, says the following:

\begin{theorem}\label{thmi}
The subspace $$H^0(X,\, \ad(E_G)(\ast \sum_{i=1}^nQ_i))\ \subset\ \bigoplus_{i=1}^n \mathfrak{g}\otimes
\mathbb{C}((\xi_i))$$ and the subspace $$H^0(X, \, \ad(E_G)\otimes K_X(\ast \sum_{i=1}^nQ_i))\ \subset\
\bigoplus_{i=1}^n \mathfrak{g}\otimes \mathbb{C}((\xi_i))d\xi_i$$ are the annihilators of each other
under the residue pairing $\mathcal R$. 
\end{theorem}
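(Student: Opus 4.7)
The plan is to prove the mutual annihilator claim by two separate inclusions, exploiting the Mayer--Vietoris cover of $X$ by $X\setminus D$ and the formal neighborhood $\widehat{D}=\bigsqcup_i\widehat{Q}_i$, where $D:=\sum_{i=1}^n Q_i$. Throughout I would abbreviate $V:=\ad(E_G)$ and invoke the $\Ad$-invariant symmetric form on $\mathfrak{g}$ (the Killing form, available since $G$ is semisimple) to obtain a perfect pairing of bundles $V\otimes V\to\mathcal{O}_X$, whence an isomorphism $V\cong V^{\vee}$. This identification is what lets Serre duality on $V$-twists be read off in terms of $V\otimes K_X$-twists in the key step below.

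The first inclusion --- that $H^0(X,V(\ast D))$ sits inside the annihilator of $H^0(X,V\otimes K_X(\ast D))$ and conversely --- is the residue theorem. Indeed, the bundle pairing extends to $V\otimes(V\otimes K_X)\to K_X$; for $s\in H^0(X,V(\ast D))$ and $\omega\in H^0(X,V\otimes K_X(\ast D))$, the image $\langle s,\omega\rangle\in H^0(X,K_X(\ast D))$ is a global meromorphic $1$-form with poles only along $D$, so $\sum_{p\in X}\operatorname{Res}_p\langle s,\omega\rangle=0$, which is precisely $\mathcal{R}(s,\omega)=0$.

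For the reverse inclusion, I would fix $v\in\bigoplus_i\mathfrak{g}((\xi_i))$ annihilating all of $H^0(X,V\otimes K_X(\ast D))$, pick $l\geq 0$ with $v\in\bigoplus_i\xi_i^{-l}\mathfrak{g}[[\xi_i]]$, and for every $k\geq 0$ apply Mayer--Vietoris to the sheaf $V(-kD)$:
\begin{equation*}
0\to H^0(V(-kD))\to H^0(X\setminus D,V)\oplus\bigoplus_i \xi_i^k\mathfrak{g}[[\xi_i]]\to \bigoplus_i\mathfrak{g}((\xi_i))\xrightarrow{\delta_k} H^1(V(-kD))\to 0.
\end{equation*}
Serre duality combined with $V\cong V^{\vee}$ gives $H^1(V(-kD))^{\ast}\cong H^0(V\otimes K_X(kD))$, and a direct computation on \v{C}ech representatives (the scalar case being \cite[Thm.~1.22]{Uenobook}) identifies this pairing with the restriction of $\mathcal{R}$. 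The hypothesis therefore forces $\delta_k(v)=0$ for every $k\geq 0$, i.e.\ $v\in H^0(X\setminus D,V)+\bigoplus_i\xi_i^k\mathfrak{g}[[\xi_i]]$ for all $k$.

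To close the argument, observe that in any decomposition $v=\bar s_k-g_k$ arising above, the polar part of $v$ pins down the polar part of $\bar s_k$, so each $s_k$ can be chosen inside the finite-dimensional subspace $H^0(X,V(lD))$. Their Laurent expansions satisfy $v\equiv \bar s_k\pmod{\bigoplus_i\xi_i^k\mathfrak{g}[[\xi_i]]}$, and so converge to $v$ in the $\xi$-adic topology on $\bigoplus_i\xi_i^{-l}\mathfrak{g}[[\xi_i]]$. Since a finite-dimensional subspace is closed in a separated topology, $v$ is itself the Laurent expansion of some section in $H^0(X,V(lD))\subset H^0(X,V(\ast D))$, as required. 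The main obstacle I anticipate is the explicit \v{C}ech-level identification of the Serre-duality pairing with $\mathcal{R}$ for the twisted sheaf $V(-kD)$; once this is in place, the rest reduces to standard filtration and limit arguments, with the invariant form on $\mathfrak{g}$ handling the transition from the line-bundle case of \cite{Uenobook} to the adjoint bundle $\ad(E_G)$.
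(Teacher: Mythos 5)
Your proposal is correct, and it rests on the same two pillars as the paper's own argument --- the residue theorem for the easy inclusion, and Serre duality combined with $\ad(E_G)\cong\ad(E_G)^*$, identified at the level of a connecting homomorphism with the residue pairing $\mathcal R$, for the hard one --- but the bookkeeping is genuinely different. The paper works with the short exact sequence $0\to\ad(E_G)(-m\sum_iQ_i)\to\ad(E_G)(N\sum_iQ_i)\to\bigoplus_i\bigoplus_{k=-N}^{m-1}\mathfrak g\otimes\mathbb C\xi_i^k\to0$, whose finite--dimensional quotient forces a truncation of the Laurent tails at level $m$; its Lemma \ref{lem:imp1} realizes each truncation as a global section $\bm{\alpha}^{(m)}\in H^0(X,\ad(E_G)(N\sum_iQ_i))$ with $N$ fixed, and the leftover tail is killed by a contradiction: if it had a nonzero leading coefficient $b_s$ at some $Q_k$, one constructs $\omega'\in H^0(X,\ad(E_G)\otimes K_X((s+1)Q_k))$ with deepest polar coefficient pairing nontrivially against $b_s$, violating the annihilation hypothesis. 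You instead use the adelic Mayer--Vietoris sequence for the cover by $X\setminus\sum_iQ_i$ and the formal discs (exact because $X\setminus\sum_iQ_i$ is affine), which lets you test the full, untruncated Laurent vector $v$ against $H^0(X,\ad(E_G)\otimes K_X(k\sum_iQ_i))$ at every level $k$ simultaneously, and you replace the contradiction step by the observation that the approximating sections all lie in the fixed finite--dimensional space $H^0(X,\ad(E_G)(l\sum_iQ_i))$, which is closed in the separated $\xi$--adic topology; that closedness argument is sound. Both routes leave the same computation implicit --- the \v{C}ech--level identification of the Serre--duality pairing with $\mathcal R$, which you correctly flag as the real content and which the paper likewise asserts without detail in \eqref{f4}. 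Your version buys a cleaner limiting step with no auxiliary form to construct; the paper's version stays within ordinary long exact sequences of coherent sheaves and never invokes the adelic description of cohomology. One small remark: like the paper, you only spell out one of the two hard inclusions; the other follows by the symmetric argument with $\ad(E_G)\otimes K_X$ in place of $\ad(E_G)$, and deserves a sentence.
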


Theorem \ref{thi0} also holds when then order of the pole at $\sum_{i=1}^n Q_i$ of the Higgs field is bounded by a positive
integer $m$ and the order of the infinitesimal neighborhood of $\sum_{i=1}^n Q_i$ on which $E_G$ is trivialized is $m-1$;
see Theorem \ref{th-j-f}.

Similar results are proved for the moduli stacks of framed connections; see Theorem \ref{th-j-f} and
Theorem \ref{thm:main3}. The proofs of these theorems follow along the same line as in the
case of Theorem \ref{thm:main1} and Theorem \ref{thm:main2}. A reason for this is that these stacks can be expressed as
sub--quotients of the cotangent bundle of the loop group $G((t))$. 

\section{Annihilators and the residue pairing}\label{se1}

In this section, we prove some general results about annihilators of adjoint bundle--valued forms on a smooth
complex curve under the residue pairing.

Let $X$ be an irreducible smooth complex projective curve or, equivalently, a compact connected Riemann
surface. The genus of $X$ will be denoted by $g$. The canonical line bundle of the curve $X$ will be
denoted by $K_X$.

Let $G$ be an affine algebraic semisimple group defined over $\mathbb C$; the Lie algebra of $G$ will be denoted by
$\mathfrak g$. Let $E_G$ be a principal $G$--bundle on the curve
$X$. The adjoint vector bundle for $E_G$,
which is the vector bundle on $X$ associated to the principal $G$--bundle $E_G$ for the adjoint action
of $G$ on its Lie algebra $\mathfrak g$, will be denoted by $\ad(E_G)$.

Fix finitely many distinct point $Q_1,\,\cdots,\, Q_n$ on $X$. For notational
convenience, for a vector bundle $V$ on $X$ and any integer $m$, the vector
bundle $$V\otimes {\mathcal O}_X(m\sum_{i=1}^n Q_i)\,=\, V\otimes ({\mathcal O}_X(\sum_{i=1}^n Q_i)^{\otimes m})
\, \longrightarrow\, X$$ will be denoted by $V(m\sum_{i=1}^n Q_i)$. The direct limit
$$
\lim_{j\rightarrow \infty} H^0(X,\, V(j\sum_{i=1}^n Q_i)),
$$
constructed using the natural inclusion maps $$H^0(X,\, V(j\sum_{i=1}^n Q_i))\, \hookrightarrow\, H^0(X,\, V((j+m)\sum_{i=1}^n Q_i)),$$
for $m\, \geq\, 0$, will be denoted by $H^0(X,\, V(\ast \sum_{i=1}^n Q_i))$.

For each $1\, \leq\, i\, \leq\, n$,
let $\xi_i$ be a choice of formal parameter at the point $Q_i$.
For each $1\, \leq\, i\, \leq\, n$, fix a trivialization of
$E_G$ on the formal completion $\widehat{Q}_i$ of $X$ along $Q_i$.
 So the restriction of
$\ad(E_G)$ to $\widehat{Q}_i$ is identified with the trivial Lie algebra bundle over $\widehat{Q}_i$ with
fiber $\mathfrak g$. Using these trivializations of $\ad(E_G)\big\vert_{\widehat{Q}_i}$, the
Laurent expansions via the formal parameters give us the following inclusion maps: 
\begin{eqnarray}\label{eqn:alurent}
\bigoplus_{i=1}^n{\iota}_i\,\,:&\,\,H^0(X,\, \ad(E_G)(\ast \sum_{i=1}^nQ_i))\ \,\hookrightarrow\ \, \bigoplus_{i=1}^n
\mathfrak{g}\otimes \mathbb{C}((\xi_i)),\label{e1}\\
\bigoplus_{i=1}^n{\eta}_i\,\,:&\,\, H^0(X,\, \ad(E_G)\otimes K_X(\ast \sum_{i=1}^nQ_i))\ \,\hookrightarrow\ \,
\bigoplus_{i=1}^n \mathfrak{g}\otimes \mathbb{C}((\xi_i))d\xi_i. \label{e2}
\end{eqnarray}

Let $(-\, ,\,-)$ denote the normalized Cartan--Killing form on the semisimple Lie algebra
$\mathfrak{g}$. This form induces the following residue pairing
\begin{equation}\label{e3}
{\mathcal R}\,\, :\,\, (\bigoplus_{i=1}^n \mathfrak{g}\otimes \mathbb{C}((\xi_i)))\otimes
(\bigoplus_{i=1}^n\mathfrak{g}\otimes \mathbb{C}((\xi_i))d\xi_i)
\ \longrightarrow\ \mathbb{C}
\end{equation}
$$
((X_i\otimes f_i(\xi_i)_{i=1})^n),\,((Y_i\otimes g_i(\xi_i)d\xi_i)_{i=1}^n)\ \longmapsto\
\sum_{i=1}^n (X_i,Y_i)\operatorname{Res}_{\xi_i=0}(f_i(\xi_i)g_i(\xi_i)d\xi_i).
$$
Note that $\operatorname{Res}_{\xi_i=0}(f_i(\xi_i)g_i(\xi_i)d\xi_i)$ is well--defined because the coefficients
of $\xi^k_i$ in the expansions of $f_i(\xi_i)$ and $g_i(\xi_i)$ vanish for all sufficiently negative $k$. Moreover the pairing $\mathcal{R}$ is nondegenerate.

The following theorem is a generalization of Theorem 1.22 in \cite{Uenobook}.

\begin{theorem}\label{thm:residueann}
The subspace $$H^0(X,\, \ad(E_G)(\ast \sum_{i=1}^nQ_i))$$ in \eqref{e1} and the subspace $$H^0(X, \,
\ad(E_G)\otimes K_X(\ast \sum_{i=1}^nQ_i))$$ in \eqref{e2} are the annihilators of each other
under the residue pairing $\mathcal R$ in \eqref{e3}. 
\end{theorem}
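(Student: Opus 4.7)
The plan is to prove each of the two inclusions separately, following the pattern used by Ueno for the trivial bundle and extending it via the self--duality of $\ad(E_G)$ provided by the Cartan--Killing form. Set $D = \sum_{i=1}^n Q_i$, and write $\iota = \bigoplus_i \iota_i$ and $\eta = \bigoplus_i \eta_i$ for the Laurent expansion maps of \eqref{e1}--\eqref{e2}.

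First I would verify that $\mathcal R$ vanishes on the product of the two subspaces. Given $\phi \in H^0(X,\ad(E_G)(\ast D))$ and $\psi \in H^0(X,\ad(E_G)\otimes K_X(\ast D))$, the $G$--invariance of the Cartan--Killing form makes $(\phi,\psi)$ a globally defined meromorphic $1$--form on $X$ with poles confined to $\{Q_1,\ldots,Q_n\}$. Applying the classical residue theorem, and using that the chosen trivializations at each $\widehat Q_i$ identify the Cartan--Killing form on $\ad(E_G)|_{\widehat Q_i}$ with the standard form on $\mathfrak g$, each local residue matches the corresponding summand in $\mathcal R$; hence $\mathcal R(\iota(\phi),\eta(\psi)) = \sum_i \operatorname{Res}_{Q_i}(\phi,\psi) = 0$.

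For the converse, I would invoke the Mayer--Vietoris (local--global) sequence on $X$ for the cover consisting of $X\setminus D$ together with the formal disks $\widehat Q_i$, applied to the sheaf $\ad(E_G)(-jD)$ for each $j\geq 0$, producing
\begin{equation*}
0 \to H^0(X,\ad(E_G)(-jD)) \to H^0(X,\ad(E_G)(\ast D))\oplus\bigoplus_i\mathfrak g\,\xi_i^{j}\mathbb C[[\xi_i]] \to \bigoplus_i\mathfrak g\otimes\mathbb C((\xi_i)) \to H^1(X,\ad(E_G)(-jD)) \to 0.
\end{equation*}
Serre duality, together with the isomorphism $\ad(E_G)\cong\ad(E_G)^\vee$ induced by the Cartan--Killing form, identifies $H^1(X,\ad(E_G)(-jD))^*$ with $H^0(X,\ad(E_G)\otimes K_X(jD))$; a \v{C}ech--cocycle computation shows that the duality pairing is precisely the one induced by $\mathcal R$. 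It will follow that $\iota(H^0(X,\ad(E_G)(\ast D))) + \bigoplus_i\mathfrak g\,\xi_i^{j}\mathbb C[[\xi_i]]$ coincides with the $\mathcal R$--annihilator of $\eta(H^0(X,\ad(E_G)\otimes K_X(jD)))$ inside $\bigoplus_i\mathfrak g\otimes\mathbb C((\xi_i))$.

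To finish I would take the limit $j \to \infty$. If $\varphi \in \bigoplus_i\mathfrak g\otimes\mathbb C((\xi_i))$ annihilates all of $\eta(H^0(X,\ad(E_G)\otimes K_X(\ast D)))$, then for every $j$ one obtains a decomposition $\varphi = \iota(s_j)+f_j$ with $s_j \in H^0(X,\ad(E_G)(\ast D))$ and $f_j \in \bigoplus_i\mathfrak g\,\xi_i^{j}\mathbb C[[\xi_i]]$. Taking successive differences, $s_j - s_{j+1} \in H^0(X,\ad(E_G)(-jD))$, which vanishes once $j$ is large enough that the twist has all Harder--Narasimhan slopes negative; hence $s_j$ stabilizes to a global section $s$ with $\iota(s)-\varphi \in \bigcap_j \bigoplus_i\mathfrak g\,\xi_i^{j}\mathbb C[[\xi_i]] = 0$, giving $\varphi = \iota(s)$. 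The symmetric argument, obtained by swapping the roles of $\ad(E_G)$ and $\ad(E_G)\otimes K_X$ in the Mayer--Vietoris sequence, yields the other inclusion. The step I expect to require the most care is the identification of the Serre duality pairing with $\mathcal R$ at the \v{C}ech level, where signs and conventions must be carefully matched; the stabilization of the sequence $(s_j)$ is routine once one knows that $H^0$ of sufficiently negative twists vanishes.
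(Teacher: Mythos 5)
Your argument is correct, and it rests on the same pillars as the paper's proof: Serre duality combined with the self--duality $\ad(E_G)\,\cong\,\ad(E_G)^*$ coming from the Cartan--Killing form, the compatibility of the Serre pairing with $\mathcal R$, and cohomology vanishing for large twists. What differs is how you pass to the limit. Writing $D\,=\,\sum_{i=1}^n Q_i$, the paper works with the finite--rank quotient in the short exact sequence \eqref{e4}, proves Lemma \ref{lem:imp1} (a truncated Laurent tail annihilating $H^0(X,\,\ad(E_G)\otimes K_X(mD))$ is the tail of a global section), truncates the given adelic element $\bm{\alpha}$, and then kills the leftover $\bm{\beta}$ by explicitly producing a section $\omega'$ of $\ad(E_G)\otimes K_X((s+1)Q_k)$ whose leading polar coefficient pairs nontrivially with that of $\bm{\beta}$. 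You instead invoke the four--term local--global (adelic) sequence for $\ad(E_G)(-jD)$ and replace that final contradiction argument by a stabilization: $s_j-s_{j+1}\,\in\,H^0(X,\,\ad(E_G)(-jD))\,=\,0$ for $j\,\gg\,0$, and $\bigcap_j \xi_i^j\mathbb{C}[[\xi_i]]\,=\,0$. This is cleaner, at the cost of taking the exactness of the adelic sequence as known (the paper in effect re--derives the relevant piece of it from \eqref{e4} together with the two vanishing statements in \eqref{f2}, so nothing essential is hidden). Both proofs leave the same step at the same level of detail, namely the \v{C}ech--level identification of the Serre duality pairing with $\mathcal R$ --- the paper asserts it in \eqref{f4} ``by comparing the constructions'', and you rightly flag it as the delicate point. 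You also make explicit the easy inclusion (vanishing of $\mathcal R$ on the product of the two subspaces via the residue theorem) and the symmetric second direction of the annihilator statement, both of which the paper treats only implicitly.
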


\begin{proof}
For any pair of positive integers $m$ and $N$,
consider the following short exact sequence of coherent sheaves on $X$: 
\begin{equation}\label{e4}
0\,\longrightarrow\, \ad(E_G)(-m \sum_{i=1}^n Q_i)\,\longrightarrow\, \ad(E_G)(N \sum_{i=1}^n Q_i)
\,\longrightarrow \,\bigoplus_{i=1}^n \bigoplus_{k= -N}^{m-1}\mathfrak{g}\otimes
\mathbb{C}\cdot\xi_i^k \,\longrightarrow\, 0;
\end{equation}
we have used the chosen trivializations of $\ad(E_G)$ over the formal completions $\widehat{Q}_i$, $1\, \leq\, i\, \leq\, n$, to
identify the quotient sheaf in \eqref{e4} with $\bigoplus_{i=1}^n \bigoplus_{k=-N}^{m-1}\mathfrak{g}\otimes
\mathbb{C}\xi_i^k$.
It can be shown that for $N$ sufficiently large,
\begin{equation}\label{j1}
H^1(X,\, \ad(E_G)(N \sum_{i=1}^n Q_i))\ =\ 0.
\end{equation}
For example, if we take any $N$ such that $Nn + \mu_{\rm min}(\ad (E_G))\, >\, 2(g-1)$, where
$\mu_{\rm min}(\ad (E_G))$ is the smallest one among the slopes of the successive quotients for the
Harder--Narasimhan filtration of the vector bundle $\ad(E_G)$, and $g\,=\, \text{genus}(X)$, then using Serre duality we have
$$
H^1(X,\, \ad(E_G)(N \sum_{i=1}^n Q_i))\,=\,
H^0(X,\, \ad(E_G)^*(-N \sum_{i=1}^n Q_i)\otimes K_X)^*\,=\,0,
$$
because the given condition that $Nn + \mu_{\rm min}(\ad (E_G))\, >\, 2(g-1)$ implies that we have
$$
\mu_{\rm max}(\ad(E_G)^*(-N \sum_{i=1}^n Q_i)\otimes K_X)\,=\, \mu_{\rm max}(\ad(E_G)^*) - Nn -N + 2(g-1)
$$
$$
\,=\, - \mu_{\rm min}(\ad(E_G)) - Nn -N + 2(g-1) \, <\, 0,$$ where
$\mu_{\rm max}(\ad (E_G)^*)$ is the largest one among the slopes of the successive quotients for the
Harder--Narasimhan filtration of the dual vector bundle $\ad(E_G)^*$. Here we are using the observation that any locally free coherent sheaf
on $X$, whose $\mu_{\rm max}$ is negative, does not admit any nonzero section; note that this observation follows immediately
from the fact that $\mu({\mathcal O}_X)\,=\, 0$. (See \cite{HL} for the construction and the properties of the
Harder--Narasimhan filtration.) Hence \eqref{j1} holds.

We note that the Cartan--Killing form on $\mathfrak g$ produces a fiberwise nondegenerate
pairing $\ad(E_G)\otimes \ad(E_G)\, \longrightarrow\, {\mathcal O}_X$, and hence we have an isomorphism
\begin{equation}\label{f1}
\ad(E_G)\ =\ \ad(E_G)^*.
\end{equation}
This implies that $\mu_{\rm max}(\ad (E_G))\,=\,- \mu_{\rm min}(\ad (E_G))$.

Next we observe that $H^0(X,\, \ad(E_G)(-m \sum_{i=1}^n Q_i))\,=\, 0$ for all $m$ sufficiently large.
Indeed, if $m$ is such that $\mu_{\rm max}(\ad (E_G))\,< \, mn$, then we have
$H^0(X,\, \ad(E_G)(-m \sum_{i=1}^n Q_i))\,=\, 0$ because
$$
\mu_{\rm max}(\ad (E_G)(-m \sum_{i=1}^n Q_i))\,= \,
\mu_{\rm max}(\ad (E_G)) - mn\,< \, 0.
$$

So take $N$ and $m$ to be sufficiently large such that
\begin{equation}\label{f2}
H^1(X,\, \ad(E_G)(N \sum_{i=1}^n Q_i))\,=\, 0\,=\, H^0(X,\, \ad(E_G)(-m \sum_{i=1}^n Q_i)).
\end{equation}

Consider the following long exact sequence of cohomologies corresponding to the short exact
sequence of sheaves in \eqref{e4}:
\begin{equation}
	\adjustbox{scale=.9,center}{
	\begin{tikzcd}
0 \ar[r]& H^0(X,\, \ad(E_G)(-m \sum_{i=1}^n Q_i)) \arrow[d, phantom, ""{coordinate, name=Z}]\ar[r]& 
H^0(X,\, \ad(E_G)(N \sum_{i=1}^n Q_i))
\ar[r,"\gamma"]& \bigoplus_{i=1}^n \bigoplus_{k= -N}^{m-1}\mathfrak{g}\otimes
\mathbb{C}\cdot\xi_i^k \arrow[dll,
"\delta"',
rounded corners,
to path={ -- ([xshift=2ex]\tikztostart.east)
	|- (Z) [near end]\tikztonodes
	-| ([xshift=-2ex]\tikztotarget.west)
	-- (\tikztotarget)}] \\
& H^1(X,\, \ad(E_G)(-m \sum_{i=1}^n Q_i)) \ar[r]& 
H^1(X,\, \ad(E_G)(N \sum_{i=1}^n Q_i)) \ar[r]& 0 .
\end{tikzcd}}
\end{equation}
Using \eqref{f2}, this reduces to the following short exact sequence:
\begin{equation}\label{e5}
\begin{tikzcd}
{} \arrow[r]
& 	H^0(X,\, \ad(E_G)(N \sum_{i=1}^n Q_i))\arrow[r,"\gamma"]
\arrow[d, phantom, ""{coordinate, name=Z}]
& \bigoplus_{i=1}^n \bigoplus_{k=-N}^{m-1}\mathfrak{g}\otimes \mathbb{C}\xi_i^k \arrow[dl,
"\delta"',
rounded corners,
to path={ -- ([xshift=2ex]\tikztostart.east)
|- (Z) [near end]\tikztonodes
-| ([xshift=-2ex]\tikztotarget.west)
-- (\tikztotarget)}] \\
& 	H^1(X,\, \ad(E_G)(-m \sum_{i=1}Q_i)) \arrow[r]
& 0.
\end{tikzcd}
\end{equation}

Next we observe that Serre duality, combined with the isomorphism given in \eqref{f1}, produces a perfect pairing 
\begin{equation}\label{f3}
B\,:\, H^1(X,\, \ad(E_G)(-m \sum_{i=1}^nQ_i))\otimes H^0(X,\, \ad(E_G)\otimes K_X(m \sum_{i=1}^nQ_i))\,
\longrightarrow\, \mathbb{C}.
\end{equation}
So this pairing $B$ identifies the cohomology
$H^1(X,\, \ad(E_G)(-m \sum_{i=1}^nQ_i))$ with the dual of $H^0(X, \ad(E_G)\otimes K_X(m \sum_{i=1}^nQ_i))$. 

Take $g(\xi_i)\,\in\, \bigoplus_{k=-N}^{m-1}\mathfrak{g}\otimes \mathbb{C}\xi_i^k$ for every
$1\, \leq\, i\, \leq\, n$. Consider
$$\delta((g(x_i)_{i=1}^n) \, \in\, H^1(X,\, \ad(E_G)(-m \sum_{i=1}Q_i)),$$ where $\delta$ is the homomorphism
in \eqref{e5}. One can show that
\begin{equation}\label{f4}
B(\delta((g(x_i))_{i=1}^n),\,\tau)\ =\ \mathcal{R}((g(x_i))_{i=1}^n,\, (\eta_i(\tau)_{i=1}^n))
\end{equation}
for all $\tau \,\in\, H^0(X,\, \ad(E_G)\otimes K_X(m \sum_{i=1}^nQ_i))$, where $\eta_i$ are the
homomorphisms in \eqref{e2} and $\mathcal R$ is the pairing in \eqref{e3}. Indeed, \eqref{f4}
follows immediately by comparing the constructions of $B$ and $\mathcal R$.

Since $B$ in \eqref{f3} is a perfect pairing, it follows immediately that we have $\delta((g(x_i))_{i=1}^n)
\,=\,0$ if and only if $$B(\delta((g(x_i))_{i=1}^n),\,\tau)\,=\,0$$ for all $\tau\,
\in \,H^0(X, \,\ad(E_G)\otimes K_X(m \sum_{i=1}^nQ_i))$. In view of \eqref{f4}, from this we
conclude that $\delta((g(x_i))_{i=1}^n)\,=\,0$ if and only if
$$\mathcal{R}((g(x_i))_{i=1}^n,\, ((\eta_i(\tau)_{i=1}^n))\,=\,0$$ for every $\tau \,\in\, H^0(X,\, 
\ad(E_G)\otimes K_X(m \sum_{i=1}^nQ_i))$.

Now by the exactness of equation \eqref{e5}, we have $\ker{\delta}\,=\,\operatorname{im}{\gamma}$. 
Therefore, we have proved the following lemma:
 
\begin{lemma}\label{lem:imp1}
An element $\omega\, \in\, \bigotimes_{j=1}^n \bigoplus_{k=-N}^{m-1}\mathfrak{g}\otimes
\mathbb{C}\xi_j^k$ lies in the subspace $$\gamma(H^0(X,\, \ad(E_G)(N\sum_{j=1}^nQ_j)))
\, \subset\, \bigoplus_{j=1}^n \bigoplus_{k=-N}^{m-1}\mathfrak{g}\otimes \mathbb{C}\xi_j^k$$
(see \eqref{e5} for $\gamma$) if and only if
$$
{\mathcal R}(\omega,\, \tau)\,\,=\,\, 0
$$
for every $\tau \,\in\, H^0 (C,\, \ad(E_G)\otimes K_X(m\sum_{j=1}^nQ_j))$,
where $\mathcal R$ is the residue pairing in \eqref{e3}; here $H^0 (C,\, \ad(E_G)\otimes
K_X(m\sum_{j=1}^nQ_j))$ is considered as a subspace of
$\bigoplus_{i=1}^n \mathfrak{g}\otimes \mathbb{C}((\xi_i))d\xi_i$ using the maps
$(\eta_i)_{i=1}^n$ in \eqref{e2}.
\end{lemma}

Lemma \ref{lem:imp1} will be used in completing the proof of Theorem \ref{thm:residueann}.

Suppose that an element $$\bm{\alpha}\,=\, (\bm \alpha_1,\, \cdots,\, \bm \alpha_n)
 \,\in\, \bigoplus_{i=1}^n \mathfrak{g}\otimes \mathbb{C}((\xi_i))$$ is 
annihilated by the subspace $H^0(X,\, \ad(E_G)\otimes K_X(\ast \sum_{i=1}^nQ_i))$ for the pairing $\mathcal R$
in \eqref{e3}; as before, $H^0(X,\, \ad(E_G)\otimes K_X(\ast \sum_{i=1}^nQ_i))$ is considered as a subspace
of $\bigoplus_{i=1}^n \mathfrak{g}\otimes \mathbb{C}((\xi_i))d\xi_i$ using the maps
$(\eta_i)_{i=1}^n$ in \eqref{e2}. We can write
\begin{equation}\label{f5}
\bm \alpha_i\ =\ \sum_{k=-N_i}^{\infty} a^{(i)}_k \xi_i^k,
\end{equation}
where $a_k^{(i)}\,\in\, \mathfrak{g}$.
Fix a positive integer $N'$ sufficiently large such that we have
\begin{equation}\label{f6}
H^1(X,\, \ad(E_G)((N'+j)\sum_{i=1}^nQ_i))\ = \ 0
\end{equation}
for all $j\, \geq\, 0$; it was observed earlier that it is possible to choose such an integer $N'$ (see \eqref{j1}). 

Now define
\begin{equation}\label{g1}
N\ :=\ \max\{N_1,\, \cdots, \, N_n;\, N'\},
\end{equation}
where $N_i$ are as in \eqref{f5} and $N'$ is the integer in \eqref{f6}. 

Recall the above condition on $\bm{\alpha}$ that it is annihilated by $H^0(X,\, \ad(E_G)\otimes
K_X(\ast \sum_{i=1}^nQ_i))$ for the pairing $\mathcal R$. For any
\begin{equation}\label{f10}
\omega_0\, \in\, H^0(X,\,\ad(E_G)\otimes K_X(\ast \sum_{i=1}^nQ_i)),
\end{equation}
write, using \eqref{e2},
\begin{equation}\label{f9}
\omega_0 \ =\ \bigoplus_{i=1}^n \sum_{j=-M_i}^{+\infty} X_{i,j}\otimes
\xi_i^j\cdot d\xi_i,
\end{equation}
where $X_{i,j}\, \in\, \mathfrak{g}$. Then we have
\begin{equation}\label{f7}
{\mathcal R}(\bm{\alpha},\, \omega_0)\,=\, \sum_{i=1}^n\sum_{k=-N_i}^{M_i-1} (a^{(i)}_k,\, X_{i, -(k+1)}). 
\end{equation}

For any fixed $m$ big enough, and $1\, \leq\, i\,\leq\, n$, we define a truncation $\bm{\alpha}_{i,m}$ of $\bm{\alpha_i}$ as follows:
\begin{equation}\label{g4}
\bm{\alpha}_{i,m}\ :=\ \sum_{k=-N_i}^{m-1} a^{(i)}_k \xi_i^k
\end{equation}
(see \eqref{f5}). Now consider $(\bm{\alpha}_{i,m})_{i=1}^n$. Note that for 
\begin{equation}\label{f8}
m\,> \,\max\{M_1,\,\cdots,\, M_n\},
\end{equation}
the section $\omega_0$ in \eqref{f10} lies in the following subspace
$$
\omega_0\ \in\ H^0(X,\ad(E_G)\otimes K_X(m\sum_{i=1}^nQ_i))\, \subset\,
H^0(X,\,\ad(E_G)\otimes K_X(\ast \sum_{i=1}^nQ_i)).
$$

Assume that $m$ satisfies the inequality in \eqref{f8}. It can be shown that
\begin{equation}\label{f11}
{\mathcal R}(\bm{\alpha},\, \omega_0)\ =\ {\mathcal R}(\bm{\alpha}_{i,m},\, \omega_0).
\end{equation}
To see this, first note that $\bm \alpha_i - \bm{\alpha}_{i,m}$ has a zero at each $Q_i$, $1\, \leq\, i\, \leq\, n$, of order at least
$m$. From the inequality in \eqref{f8} and the expression in \eqref{f9} we conclude that the pairing
$(\bm \alpha_i - \bm{\alpha}_{i,m}, \, \omega_0)\, \in\, \mathbb{C}((\xi_i))$ does note have any pole at $Q_i$.
This immediately implies that \eqref{f11} holds.

Since $\bm{\alpha}$ is annihilated by $H^0(X,\, \ad(E_G)\otimes
K_X(\ast \sum_{i=1}^nQ_i))$ for the pairing $\mathcal R$, from \eqref{f11} it follows that
$(\bm{\alpha}_{i,m})_{i=1}^n$ is also annihilated by $H^0(X,\,\ad(E_G)\otimes K_X(m\sum_{i=1}^nQ_i))$
for the pairing $\mathcal R$.

As $(\bm{\alpha}_{i,m})_{i=1}^n$ is annihilated by $H^0(X,\,\ad(E_G)\otimes K_X(m\sum_{i=1}^nQ_i))$
for the pairing $\mathcal R$, we observe that Lemma \ref{lem:imp1} implies that there is a global section
\begin{equation}\label{g2}
\bm{\alpha}^{(m)}\ \in\ H^0(X,\,\ad(E_G)(N\sum_{i=1}^nQ_i))
\end{equation}
(see \eqref{g1} for $N$) whose Laurent expansion at each $Q_i$ gives
$\bm{\alpha}_{i,m}$. Now further $\bm{\alpha}^{(m)}$ is also an element of the space
$H^0(X,\,\ad(E_G)(\ast \sum_{i=1}^nQ_i))$. So for any
$$
\widetilde{\omega}\ \in\ H^0(X,\,\ad(E_G)\otimes K_X(\ast \sum_{i=1}^nQ_i)),
$$
the pairing $(\bm{\alpha}^{(m)},\, \widetilde{\omega})$ is a meromorphic $1$--form on $X$.
Consequently, the total residue of the form $(\bm{\alpha}^{(m)},\, \widetilde{\omega})$ is zero.
Therefore, $\bm{\alpha}^{(m)}$ in \eqref{g2} is annihilated by any element of 
$H^0(X,\,\ad(E_G)\otimes K_X(\ast \sum_{i=1}^nQ_i))$ under the residue pairing. 

Hence we conclude that $(\bm{\alpha}_{i,m})_{i=1}^n$ is annihilated by
the entire $H^0(X,\,\ad(E_G)\otimes K_X(\ast \sum_{i=1}^nQ_i))$ under the pairing $\mathcal R$.

Now for each $i\,\in\,\{1,\,\cdots,\, n\}$, define
\begin{equation}\label{e02}
\bm{\beta}_i\ :=\ \bm{\alpha}_{i}-\bm{\alpha}^{(m)}.
\end{equation}
Observe that 
for all $i\,\in\, \{1,\,\cdots,\, n\}$, this $\bm{\beta}_i$ has a zero at $Q_i$ of order at least $m$
(see \eqref{f5} and \eqref{g4}).

To complete the proof of the theorem, it suffices to show that
\begin{equation}\label{g5}
\bm{\beta}_{i}\ =\ 0
\end{equation}
for every $1\, \leq\, i\, \leq\, n$.

Suppose for some $k$, we have
\begin{equation}\label{g6}
\bm{\beta}_k\ \not=\, 0.
\end{equation}
Thus we can find $s\,\geq\,m$ such that
\begin{equation}\label{e01}
\bm{\beta}_k\ =\ b_s\xi_k^{s}+b_{s+1}\xi^{s+1}_k+\dots ,
\end{equation}
with $b_s\, \not=\, 0$.

Recall that $\bm{\alpha}$ and $(\bm{\alpha}_{i,m})_{i=1}^n$ are both actually annihilated by
the entire $H^0(X,\,\ad(E_G)\otimes K_X(\ast \sum_{i=1}^nQ_i))$. Hence from \eqref{e02} it
follows that
\begin{equation}\label{e03}
{\mathcal R}((\bm{\beta}_i)_{i=1}^n,\, \omega_0) \ = \ 0
\end{equation}
for all $\omega_0\, \in\, H^0(X,\,\ad(E_G)\otimes K_X(\ast \sum_{i=1}^nQ_i))$.

Choose $m$ such that
$$
m + \mu_{\rm min}(\ad(E_G)) \ > \ 0.
$$
Since $s\, \geq\, m$ (see \eqref{e01}), this implies that
\begin{equation}\label{g7}
s + \mu_{\rm min}(\ad(E_G)) \ > \ 0.
\end{equation}
Consider the following short exact sequence of coherent sheaves on $X$:

$$
	\begin{tikzcd}
		0 \arrow[r]
		& 	\ad(E_G)\otimes K_X\otimes {\mathcal O}_X(s Q_k) \arrow[r,]
		\arrow[d, phantom, ""{coordinate, name=Z}]
		& \ad(E_G)\otimes K_X\otimes {\mathcal O}_X ((s+1)Q_k) \arrow[dl, 
		rounded corners,
		to path={ -- ([xshift=2ex]\tikztostart.east)
			|- (Z) [near end]\tikztonodes
			-| ([xshift=-2ex]\tikztotarget.west)
			-- (\tikztotarget)}] \\
		& 	\ad(E_G)_{Q_k}\otimes (K_X\otimes {\mathcal O}_X ((s+1)Q_k))_{Q_k} \arrow[r]
		& 0,
	\end{tikzcd}
$$
where $k$ is as in \eqref{e01} and $\ad(E_G)_{Q_k}\otimes (K_X\otimes {\mathcal O}_X ((s+1)Q_k))$
is the fiber of $\ad(E_G)_{Q_k}\otimes (K_X\otimes {\mathcal O}_X ((s+1)Q_k))$ over the point $Q_k$.
It gives an exact sequence of cohomologies
\begin{equation}\label{g8}
\begin{tikzcd}
	{} \arrow[r]
	& H^0(X,\ad(E_G)\otimes K_X\otimes {\mathcal O}_X((s+1) Q_k)) \arrow[r,"\rho"]
	\arrow[d, phantom, ""{coordinate, name=Z}]
	& \ad(E_G)\otimes K_X\otimes {\mathcal O}_X ((s+1)Q_k)_{Q_k} \arrow[dl,"\delta"',
	rounded corners,
	to path={ -- ([xshift=2ex]\tikztostart.east)
		|- (Z) [near end]\tikztonodes
		-| ([xshift=-2ex]\tikztotarget.west)
		-- (\tikztotarget)}] \\
	& 	H^1(X,\ad(E_G)_{Q_k}\otimes (K_X\otimes {\mathcal O}_X ((s+1)Q_k))) \arrow[r]
	& {}.
\end{tikzcd}
\end{equation}

By Serre duality,
$$
H^1(X,\, \ad(E_G)\otimes K_X\otimes {\mathcal O}_X(s Q_k))\,=\,
H^0(X,\, \ad(E_G)^*\otimes {\mathcal O}_X(-s Q_k))^*.
$$
We have
$$
\mu_{\rm max}(\ad(E_G)^*\otimes {\mathcal O}_X(-s Q_k))\,=\, - s - \mu_{\rm min}(\ad(E_G))\, <\, 0
$$
using \eqref{g7}. This implies that we have $H^0(X,\, \ad(E_G)^*\otimes {\mathcal O}_X(-s Q_k))\,=\, 0$, and hence
it follows that $$H^1(X,\, \ad(E_G)\otimes K_X\otimes {\mathcal O}_X(s Q_k))\,=\,0.$$ Consequently, the
homomorphism $\rho$ in \eqref{g8} is surjective.

Since $\rho$ in \eqref{g8} is surjective, there is a section
$$\omega' \ \in\ H^0(X,\, \ad(E_G)\otimes K_X\otimes {\mathcal O}_X ((s+1)Q_k))
\ \subset \ H^0(X,\, \ad(E_G)\otimes K_X((s+1)\sum_{i=1}^n Q_i))
$$
with the following property: Let $\omega'_{s+1}$ be the coefficient of $\xi^{-(s+1)}_k$ in the Laurent expansion of
$\omega'$ around the point $Q_k$ (see \eqref{g6} for $k$); then
\begin{equation}\label{g9}
(b_s,\, \omega'_{s+1}) \ \neq\ 0,
\end{equation}
where $b_s$ is as in \eqref{e01} (recall that $b_s\, \not=\, 0$).

Since $\omega'$, as a meromorphic section of $\ad(E_G)\otimes K_X$, has pole only at $Q_k$, and
$(\bm{\beta}_i)_{i=1}^n$ has no pole at any $Q_i$, we conclude that
$$
{\mathcal R}((\bm{\beta}_i)_{i=1}^n,\, \omega') \ =\ (b_s,\, \omega'_{s+1}) \ \neq\ 0
$$
(see \eqref{g9}). But this contradicts \eqref{e03}. Therefore, we conclude that
\eqref{g5} holds for every $1\, \leq\, i\, \leq\, n$. This completes the proof of the theorem.
\end{proof}

\section{Symplectic Structures}

\subsection{Induced symplectic form}

Let $V_1$ be a complex vector space, not necessarily finite dimensional. Let
$$A\,:\, V_1\otimes V_1\, \longrightarrow\, \mathbb{C}$$ be an alternating bilinear form. The
form gives
an element $$\omega \,\in\, \bigwedge\nolimits^2 V_1^*.$$ This element $\omega$
of $\bigwedge\nolimits^2 V_1^*$ induces a linear
map $$\widetilde{\omega}\,:\, V_1 \,\longrightarrow\, V_1^* .$$
Note that there are two possible choices of $\omega'$: any $v\, \in\, V_1$ is sent to the
map $w\, \longmapsto\, A(v,\, w)$ or to the map $w\, \longmapsto\, A(w,\, v)$. These two
homomorphisms differ only by a sign. We say that the form $A$
is {\em symplectic} if the homomorphism $\widetilde{\omega}$ is injective, in which case
$\widetilde{\omega}$ is also called symplectic.

Let $A$ be a symplectic structure on $V_1$.
Let $V_2$ be a linear subspace of $V_1$. We have the following sequence of linear maps:
\begin{equation}\label{eqn:ob1}
0\,\lra\, V_2 \,\lra\, V_1 \, \stackrel{\widetilde{\omega}}{\longrightarrow}\, V_1^* \,\lra
\, V_2^* \,\lra\, 0.
\end{equation}
Let $$f \, :\, V_2\,\longrightarrow \,V_2^*$$ be the composition maps in \eqref{eqn:ob1}; let
\begin{equation}\label{22}
K\, :=\, \operatorname{Ker}(f)\, \subset\, V_2
\end{equation}
be the kernel of $f$. Hence $f$ induces an injective homomorphism
\begin{equation}\label{24}
{\phi}\,:\, V_2/K\, \hookrightarrow\, V_2^* .
\end{equation}

We address the question that asks whether the homomorphism $\phi$
in \eqref{24} induces a symplectic form on $V_2/K$.

Consider the short exact sequence 
\begin{equation}\label{21}
0 \,\lra\, (V_2/K)^* \,\stackrel{\alpha}{\longrightarrow}\, V_2^* \,\stackrel{\alpha'}{\lra}\, K^* \,\lra\, 0,
\end{equation}
where $\alpha$ is the dual of the quotient map $V_2\, \longrightarrow\, V_2/K$ and the surjective homomorphism
$V_2^* \,\lra\, K^*$ is the dual of the inclusion map in \eqref{22}.
It is straightforward to see that we have an induced symplectic structure on $V_2/K$ if 
\begin{equation}\label{23}
\operatorname{Im}(\phi)\ \subseteq\ \operatorname{Im}(\alpha),
\end{equation}
where $\phi$ and $\alpha$ are the homomorphisms in \eqref{24} and \eqref{21} respectively.
Indeed, if \eqref{23} holds, then $\phi$ factors through a homomorphism
$$\widetilde{\phi}\,:\, V_2/K \,\hookrightarrow\, (V_2/K)^*.$$
In other words, $\widetilde{\phi}$ is uniquely determined by the following condition:
$$
\alpha \circ\widetilde{\phi}\ =\ \phi.
$$
This homomorphism $\widetilde{\phi}$ is anti--symmetric because $\widetilde{\omega}$ in \eqref{eqn:ob1}
is so.

To prove that \eqref{23} holds, take any $v\, \in\, V_2$; its image in $V_2/K$ will be denoted by
$\widehat{v}$. We have
$$
\phi(\widehat{v})(w) \,=\, A(v,\, w)
$$
for all $w\, \in\, V_2$. Restrict $\phi(\widehat{v})$ to $K$. For $w\,\in\, K$, we have
$$
\phi(\widehat{v})(w) \,=\, A(v,\, w)\,=\, - A(w,\, v)\,=\, - f(w)(v) \,=\, 0
$$
(see \eqref{22}). This implies that \eqref{23} holds.

Therefore, we have proved the following:

\begin{proposition}\label{prop1}
Let $A$ be a symplectic form on a vector space $V_1$. Let $V_2$ be a subspace, and let $K\,=\,
\operatorname{Ker}(f)$ be the kernel of the homomorphism $f$ (constructed as in \eqref{22}). Then
$A$ induces a symplectic structure $\overline{A}$ on the quotient vector space $V_2/K$.
\end{proposition}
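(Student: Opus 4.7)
The plan is to produce the symplectic form on $V_2/K$ directly by descending the form $A$, and to use the construction $f:V_2\to V_2^*$ together with its kernel $K$ purely as the bookkeeping device that guarantees the descent is both well--defined and nondegenerate. Concretely, I would set $\overline{A}(\hat{v},\hat{w}):=A(v,w)$, where $\hat{v}$ denotes the image in the quotient of $v\in V_2$, and then verify in turn that $\overline{A}$ is well--defined, antisymmetric, and nondegenerate.

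Well--definedness uses the very definition of $K$: if $v\in K$ then $f(v)=0\in V_2^*$, i.e.\ $A(v,w)=0$ for all $w\in V_2$; by antisymmetry of $A$ the analogous vanishing holds when the second argument lies in $K$. Antisymmetry of $\overline{A}$ is then immediate from that of $A$. For nondegeneracy, I want to show that the induced homomorphism $\widetilde{\phi}:V_2/K\to (V_2/K)^*$, defined by $\widetilde{\phi}(\hat{v})(\hat{w})=\overline{A}(\hat{v},\hat{w})$, is injective. This is precisely the content of producing a factorization $\alpha\circ\widetilde{\phi}=\phi$ through the inclusion $\alpha:(V_2/K)^*\hookrightarrow V_2^*$ in \eqref{21}, combined with the tautological injectivity of $\phi$ in \eqref{24} (the source $V_2/K$ is the quotient by the kernel of $f$).

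The key step --- the one which actually uses the antisymmetry of $A$ and that I view as the only non--formal ingredient --- is verifying the inclusion $\operatorname{Im}(\phi)\subseteq\operatorname{Im}(\alpha)$ of \eqref{23}. Since $\operatorname{Im}(\alpha)$ consists of exactly those linear functionals on $V_2$ that annihilate $K$, this reduces to checking that $\phi(\hat{v})=A(v,-)$ vanishes on $K$; for $w\in K$ I would compute
\[
\phi(\hat{v})(w)\,=\,A(v,w)\,=\,-A(w,v)\,=\,-f(w)(v)\,=\,0,
\]
using $w\in\operatorname{Ker}(f)$ in the last equality. Once this inclusion is established, $\widetilde{\phi}$ exists by the injectivity of $\alpha$, is itself injective because $\phi$ is, and is antisymmetric because $A$ is. This yields the required symplectic form $\overline{A}$ on $V_2/K$ and completes the proof.
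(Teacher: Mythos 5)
Your proposal is correct and follows essentially the same route as the paper: the heart of both arguments is the verification of the inclusion $\operatorname{Im}(\phi)\subseteq\operatorname{Im}(\alpha)$ via the computation $\phi(\widehat{v})(w)=A(v,w)=-A(w,v)=-f(w)(v)=0$ for $w\in K$, followed by the factorization $\alpha\circ\widetilde{\phi}=\phi$ and the observation that injectivity of $\phi$ forces injectivity of $\widetilde{\phi}$. The only cosmetic difference is that you phrase the construction as descending $A$ directly to $\overline{A}$ and then reading off $\widetilde{\phi}$, whereas the paper works with the maps throughout; the content is the same.
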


Note that we can describe the above subspace $K$ as
$${}^LK\,\,=\,\, \{ v \,\in\, V_2\,\,\big\vert\,\, A(v,\,w)\,=\,0 \ \ \forall \ \, w\,\in\, V_2\}.$$
Since the form $A$ is bilinear and anti--symmetric, ${}^LK$ is a vector subspace of $V_2$ and it
coincides with the subspace
$${}^RK\,\,=\,\, \{ v \,\in\, V_2\,\,\big\vert\,\, A(w,\,v)\,=\,0 \ \ \forall \ \, w\,\in\, V_2\}.$$

\subsection{Canonical symplectic structures}\label{se2.2}

Let $M$ be a smooth manifold. Consider the cotangent bundle
\begin{equation}\label{2a}
p\, :\, T^*M\, \longrightarrow\, M.
\end{equation}
We recall that there is a natural $1$--form $\theta$ on $T^*M$ which is constructed as follows:
For any $x\, \in\, M$ and any $w\, \in\, (T_x M)^*$, we have $\theta (v)\,=\, w(dp(v))$ for all
$v\, \in\, T_w (T^*M)$, where $dp\,:\, T(T^*M) \, \longrightarrow\, TM$ is the differential of
the projection $p$ in \eqref{2a}. This form $\theta$ is known as the {\em Liouville one--form}. Then
\begin{equation}\label{e0}
\omega_{T^*M} \ = \ d\theta
\end{equation}
is a symplectic form on the total space of $T^*M$, i.e., it is a nondegenerate
closed $2$--form on $T^*M$.

If $M$ is a complex manifold, then $\theta$ and $\omega_{T^*M}$ are holomorphic forms.
If $M$ is a smooth variety, then both $\theta$ and $\omega_{T^*M}$ are algebraic forms.

\subsection{Liouville form on groups}\label{sec:Liouvilleforgroups}

Let $\mathscr{G}$ be a Fr\'echet Lie group not--necessarily finite dimensional. Its
Lie algebra will be denoted by $\mathfrak{s}$. Then
the cotangent bundle $T^*\mathscr{G}$ is trivial and it is identified with the vector
bundle $\mathscr{G}\times \mathfrak{s}^*\, \longrightarrow\, \mathscr{G}$.
The tangent bundle of $T^*\mathscr{G}$ has the following description
$$
T(T^*\mathscr{G})\ =\ 
(T^*\mathscr{G})\times (\mathfrak{s}\times \mathfrak{s}^*)\ =\
(\mathscr{G}\times \mathfrak{s}^*)\times (\mathfrak{s}\times \mathfrak{s}^*).$$

Take any $z \,=\, (g,\, w)\, \in\, \mathscr{G}\times \mathfrak{s}^*\,=\,
T^*\mathscr{G}$. Consider $T_z (T^*\mathscr{G})\,=\, \mathfrak{s}\times \mathfrak{s}^*$.
Consider the canonical $1$--form
$$\omega\ :\ T(T^*\mathscr{G})\ \longrightarrow\ {\mathbb C}$$
on $T^*\mathscr{G}$. Its restriction to $T_z (T^*\mathscr{G})\,=\,
\mathfrak{s}\times \mathfrak{s}^*$ has the following description:
$$
\omega(z) (u,\, v)\ =\ w(u) \ \in\ {\mathbb C},
$$
where $u\, \in \mathfrak{s}$ and $v\, \in\, \mathfrak{s}^*$.

Next we will describe the Liouville symplectic form $d\omega$ on $T^*\mathscr{G}$.

As before, take $z \,=\, (g,\, w)\, \in\, T^*\mathscr{G}$. For $k\,=\, 1,\, 2$,
take tangent vectors at $z$
$$
\gamma_k\ = \ (u_k,\, v_k)\ \in\ T_z (T^*\mathscr{G})\ =\
\mathfrak{s}\times \mathfrak{s}^*,
$$
so $u_k\, \in\, \mathfrak{s}$ and $v_k\, \in\, \mathfrak{s}^*$. Now we have
$$
d\omega (z)(\gamma_1,\, \gamma_2)\ =\ v_2(u_1) - v_1(u_2)\ \in\ {\mathbb C}.
$$

\subsection{Symplectic structures on the quotient}

Let $M$ be smooth manifold equipped with a symmetric bilinear form $\omega$ which is
nondegenerate. Here nondegeneracy means the following: Let
\begin{equation}\label{e7}
\omega'\, :\, TM \,\hookrightarrow\, T^*M
\end{equation}
be the homomorphism produced by this nondegenerate symmetric bilinear form $\omega$; it should be
clarified that nondegeneracy of $\omega$ means that the homomorphism $\omega'$ is fiber--wise injective.
Note that this condition implies that $\omega'$ is an isomorphism if $M$ is a finite dimensional manifold.

Assume that a Lie group $\mathscr{G}$ acts on $M$ such that the above
nondegenerate symmetric bilinear form $\omega$ on
$M$ is preserved by the action of $\mathscr{G}$. The Lie algebra of $\mathscr{G}$ will be
denoted by ${\mathfrak g}_0$. The action of $\mathscr{G}$ on $M$ produces a homomorphism
\begin{equation}\label{e6}
\phi\, :\, M\times {\mathfrak g}_0\, \longrightarrow\, TM
\end{equation}
from the trivial vector bundle $M\times {\mathfrak g}_0\, \longrightarrow\, M$ on $M$
with fiber ${\mathfrak g}_0$.

The action of $\mathscr{G}$ on $M$ induces an action of $\mathscr{G}$ on the tangent
bundle $TM$, and as well as on the cotangent bundle $T^*M$. Now 
consider the subsheaf $\mathcal{F}^{\mathscr{G}}$ of $TM$ given by the $\mathscr{G}$ orbits; its fiber
at a point $m\, \in\, M$ is $T_m(\mathscr{G}\cdot m)$. More precisely, $\mathcal{F}^{\mathscr{G}}$ is the
image of the homomorphism $\phi$ in \eqref{e6}, so we have
\begin{equation}\label{e20}
\mathcal{F}^{\mathscr{G}}\ =\ {\rm image}(\phi) \ \subset\ TM.
 \end{equation}
The annihilator of $\mathcal{F}^{\mathscr{G}}$ for the
nondegenerate symmetric bilinear form $\omega$ on $M$ will be denoted by
$(\mathcal{F}^{\mathscr{G}})^{\perp}$. (Since
$\omega$ is symmetric, $\mathcal{F}^{\mathscr{G}}$ does not depend on the two
choices available to define the annihilator.)

Let
\begin{equation}\label{e7a}
\mathcal{V}\ :=\ \omega'((\mathcal{F}^{\mathscr{G}})^{\perp})\ \subset \ T^*M
\end{equation}
be the image of the annihilator $(\mathcal{F}^{\mathscr{G}})^{\perp}$ under the
homomorphism $\omega'$ in \eqref{e7}, where $\mathcal{F}^{\mathscr{G}}$ is defined
in \eqref{e20}. Then $\mathcal{V}$ is a subsheaf of the cotangent 
bundle ${T^*}M$ of a manifold $M$. Note that $\mathcal{V}$ plays the role of the
annihilator of $\mathcal{F}^{\mathscr{G}}$ under the pairing $\omega$.

Consider the action of $\mathscr{G}$ on $T^*M$ induced by the action of $\mathscr{G}$ on $M$.
The following lemma shows that the subspace $\mathcal{V}$ in \eqref{e7a} is preserved by this
action of $\mathscr{G}$.

\begin{lemma}\label{lem1}
The action of $\mathscr{G}$ on $T^*M$ induced by the action of $\mathscr{G}$ on $M$,
preserves $\mathcal{V}$ constructed in \eqref{e7a}.
\end{lemma}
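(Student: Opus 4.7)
The plan is to chase $\mathscr{G}$-equivariance through each of the three ingredients entering the definition of $\mathcal{V}$: the orbit distribution $\mathcal{F}^{\mathscr{G}}$, its $\omega$-annihilator $(\mathcal{F}^{\mathscr{G}})^{\perp}$, and the bundle map $\omega'$. Each of these is canonically constructed from data that $\mathscr{G}$ already preserves, so invariance should propagate automatically; the only real work is to check that the conventions are consistent.

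First, I would verify that $\mathcal{F}^{\mathscr{G}}$ is itself $\mathscr{G}$-invariant. For any $g\in\mathscr{G}$ and $m\in M$, left-translation by $g$ sends the orbit $\mathscr{G}\cdot m$ diffeomorphically onto $\mathscr{G}\cdot(gm)$; differentiating at $m$ gives $g_{*}T_m(\mathscr{G}\cdot m)=T_{gm}(\mathscr{G}\cdot gm)$. Equivalently, the map $\phi$ in \eqref{e6} is $\mathscr{G}$-equivariant with respect to the natural action on the base, and therefore its image $\mathcal{F}^{\mathscr{G}}\subset TM$ is stable under the induced action of $\mathscr{G}$ on $TM$.

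Second, the hypothesis that $\mathscr{G}$ preserves $\omega$ says precisely that $\omega'\colon TM\to T^{*}M$ is $\mathscr{G}$-equivariant. Combined with the previous step, this yields the $\mathscr{G}$-invariance of $(\mathcal{F}^{\mathscr{G}})^{\perp}$: if $v\in(\mathcal{F}^{\mathscr{G}})^{\perp}_m$ and $u\in\mathcal{F}^{\mathscr{G}}_{gm}$, write $u=g_{*}w$ for some $w\in\mathcal{F}^{\mathscr{G}}_m$ using the previous paragraph; then $\omega(g_{*}v,u)=\omega(g_{*}v,g_{*}w)=\omega(v,w)=0$, so $g_{*}v\in(\mathcal{F}^{\mathscr{G}})^{\perp}_{gm}$. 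Applying the equivariant map $\omega'$ to this invariant subsheaf gives the invariant subsheaf $\mathcal{V}$.

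I do not expect a genuine obstacle; the statement is formal. The one point I would want to pin down carefully is the convention: the $\mathscr{G}$-action on $T^{*}M$ under discussion must be the one induced from the action on $M$ via pullback by $g^{-1}$, which is exactly the action that makes $\omega'$ equivariant. Once this compatibility is recorded, the three equivariance statements above combine to give the lemma immediately.
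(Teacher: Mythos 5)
Your proposal is correct and follows essentially the same three-step argument as the paper: equivariance of $\phi$ gives invariance of $\mathcal{F}^{\mathscr{G}}$, invariance of $\omega$ then gives invariance of $(\mathcal{F}^{\mathscr{G}})^{\perp}$, and equivariance of $\omega'$ transports this to $\mathcal{V}$. The extra details you supply (the explicit computation $\omega(g_{*}v,g_{*}w)=\omega(v,w)=0$ and the remark on the convention for the induced action on $T^{*}M$) are consistent with, and slightly more explicit than, the paper's proof.
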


\begin{proof}
Consider the action of $\mathscr{G}$ on $TM$ induced by the action of $\mathscr{G}$ on $M$.
The homomorphism $\phi$ in \eqref{e6} is evidently $\mathscr{G}$--equivariant for the adjoint
action of $\mathscr{G}$ on its Lie algebra ${\mathfrak g}_0$ and the above actions of
$\mathscr{G}$ on $M$ and $TM$. This immediately implies that the
action of $\mathscr{G}$ on $TM$ preserves the subsheaf
$\mathcal{F}^{\mathscr{G}}\, \subset\, TM$ in \eqref{e20}.

Recall the given condition that the action of $\mathscr{G}$ on $M$ preserves the nondegenerate
symmetric bilinear form
$\omega$. Since the action of $\mathscr{G}$ on $TM$ preserves $\mathcal{F}^{\mathscr{G}}$,
this implies that the action of $\mathscr{G}$ on $TM$ also
preserves $(\mathcal{F}^{\mathscr{G}})^{\perp}$.

Consider the action of $\mathscr{G}$ on $T^*M$ induced by the action of $\mathscr{G}$ on $M$.
The homomorphism $\omega'$ in \eqref{e7} is $\mathscr{G}$--equivariant because the
nondegenerate symmetric bilinear form
$\omega$ is preserved by the action of $\mathscr{G}$ on $M$. Since
the action of $\mathscr{G}$ preserves $(\mathcal{F}^{\mathscr{G}})^{\perp}$, and
$\omega'$ is $\mathscr{G}$--equivariant, we conclude that the
action of $\mathscr{G}$ on $T^*M$ preserves $\omega'((\mathcal{F}^{\mathscr{G}})^{\perp})
\,=\, \mathcal{V}$.
\end{proof}

Consider the action of $\mathscr{G}$ on $\mathcal{V}$ obtained in Lemma \ref{lem1}.
Since the natural map
\begin{equation}\label{0p}
p\, :\, T^*M\, \longrightarrow\, M
\end{equation}
is $\mathscr{G}$--equivariant, the natural projection $\mathcal{V} \, \longrightarrow\, M$ is
also $\mathscr{G}$--equivariant. Consequently, we have
\begin{equation}\label{e8}
\mathcal{V}_{\mathscr{G}}\ :=\ \mathcal{V}/\mathscr{G} \ \longrightarrow\ M/\mathscr{G}.
\end{equation}

Now consider the Liouville one--form $\theta_M$ on $T^*M$ defined in Section \ref{se2.2}.
The following lemma says that the action of $\mathscr{G}$ on $T^*M$ preserves
$\theta_M$.

\begin{lemma}\label{lem2}
The action of $\mathscr{G}$ on $T^*M$, induced by the action of $\mathscr{G}$ on $M$,
preserves the Liouville one--form $\theta_M$ on $T^*M$ defined in Section \ref{se2.2}.
\end{lemma}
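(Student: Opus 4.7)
The plan is to reduce Lemma \ref{lem2} to the classical naturality of the Liouville one--form: any diffeomorphism of $M$ lifts canonically to a diffeomorphism of $T^*M$ that preserves $\theta_M$. Applying this with each $g\in\mathscr{G}$ acting on $M$ then immediately gives the lemma.

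First I would make the lift explicit. For $g\in\mathscr{G}$, let $L_g\colon M\to M$ denote the diffeomorphism given by the action. The induced action on $T^*M$ is, by definition, the bundle map $\widetilde{L}_g\colon T^*M\to T^*M$ given by
$$
\widetilde{L}_g(x,w)\ =\ \bigl(L_g(x),\ ((dL_g)_x^{-1})^{*}w\bigr),
$$
where $(dL_g)_x^{-1}\colon T_{L_g(x)}M\to T_xM$ is the inverse of the differential and $({-})^{*}$ denotes its transpose. This formula is forced on us: the inverse differential is needed so that covectors are pushed forward contravariantly, and in particular it yields $p\circ\widetilde{L}_g=L_g\circ p$, with $p$ as in \eqref{2a}. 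It suffices to prove $\widetilde{L}_g^{\,*}\theta_M=\theta_M$ for every $g\in\mathscr{G}$.

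Next I would verify this identity by unwinding the definition of $\theta_M$ recalled in Section \ref{se2.2}. Fix $(x,w)\in T^*M$, set $(y,u):=\widetilde{L}_g(x,w)$, and take $v\in T_{(x,w)}(T^*M)$. Then
$$
(\widetilde{L}_g^{\,*}\theta_M)(x,w)(v)\ =\ \theta_M(y,u)\bigl(d\widetilde{L}_g(v)\bigr)\ =\ u\bigl(d(p\circ\widetilde{L}_g)(v)\bigr)\ =\ u\bigl(dL_g(dp(v))\bigr),
$$
and since $u=((dL_g)_x^{-1})^{*}w$, composing gives $u\circ dL_g=w$ on $T_xM$, so the last expression equals $w(dp(v))=\theta_M(x,w)(v)$. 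This is purely formal and works verbatim for the tangent vector description of $T^*\mathscr{G}$ recalled in Section \ref{sec:Liouvilleforgroups}.

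There is no genuine obstacle here: the statement is the standard naturality of the tautological one--form, and the only thing requiring care is choosing the correct lift to $T^*M$ (with $(dL_g)^{-1}$ rather than $dL_g$) so that $\widetilde{L}_g$ actually maps $T^*M$ to itself and covers $L_g$. In the Fr\'echet setting considered in the paper, smoothness of the $\mathscr{G}$--action on $M$ is built into the hypotheses, so $\widetilde{L}_g$ is a smooth bundle map and the pointwise computation above is all that is required.
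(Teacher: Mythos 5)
Your proof is correct and is exactly the ``straight--forward computation'' that the paper's own proof invokes without writing out: you use the canonical lift $\widetilde{L}_g(x,w)=(L_g(x),((dL_g)_x^{-1})^{*}w)$, the relation $p\circ\widetilde{L}_g=L_g\circ p$, and the identity $u\circ dL_g=w$ to conclude $\widetilde{L}_g^{\,*}\theta_M=\theta_M$. This is the standard naturality argument for the tautological one--form and matches the paper's (implicit) approach.
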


\begin{proof}
This a straight--forward computation using the definition of the Liouville one--form $\theta_M$.
\end{proof}

Restrict the Liouville one--form $\theta_M$ to $\mathcal{V}$ (defined in \eqref{e7a}). Let
\begin{equation}\label{e9}
\theta'_M \ \in \ H^0(\mathcal{V},\, T^*\mathcal{V})
\end{equation}
be the restriction of $\theta_M$ to $\mathcal{V}$.

Our aim is to find sufficient conditions ensuring the following: 
\begin{enumerate}
\item The $1$--form $\theta'_M$ on $\mathcal{V}$ (see \eqref{e9}) descends to a
$1$--form $\vartheta$ on the quotient $\mathcal{V}_{\mathscr{G}}$ in \eqref{e8}.

\item The exterior derivative $d\vartheta$ on $\mathcal{V}_{\mathscr{G}}$ is nondegenerate. 
\end{enumerate}

The following lemma ensures the first one of the above two requirements actually holds.

\begin{lemma}\label{prop:mainprop1}
The one--form $\theta'_M$ on $\mathcal{V}$ (defined in \eqref{e9}) descends to
the quotient $\mathcal{V}_{\mathscr{G}}$ in \eqref{e8}.
\end{lemma}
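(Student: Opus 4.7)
\textbf{Proof plan for Lemma \ref{prop:mainprop1}.}

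The plan is to verify the two standard conditions under which a differential form on a manifold with a group action descends to the quotient: (i) $\mathscr{G}$--invariance of $\theta'_M$, and (ii) vanishing of $\theta'_M$ on all vertical vectors, i.e.\ those tangent to the $\mathscr{G}$--orbits in $\mathcal{V}$. Condition (i) is immediate: Lemma \ref{lem2} asserts that the Liouville form $\theta_M$ on $T^*M$ is $\mathscr{G}$--invariant, Lemma \ref{lem1} shows that $\mathcal{V}$ is $\mathscr{G}$--stable, and so the restriction $\theta'_M$ is $\mathscr{G}$--invariant as a form on $\mathcal{V}$. The substantive content therefore lies in (ii).

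For (ii), I would fix a point $z \,=\, (m,\, \xi)\, \in\, \mathcal{V}$. By the definition of $\mathcal{V}$ in \eqref{e7a}, we have $\xi \,=\, \omega'(v)$ for some $v \,\in\, (\mathcal{F}^{\mathscr{G}})^{\perp}_m$, which by definition of $\omega'$ in \eqref{e7} means
$$
\xi(u) \ = \ \omega(v,\, u) \qquad \text{for every } u\,\in\, T_m M.
$$
The infinitesimal $\mathscr{G}$--action on $T^*M$ produces, for each $Y\, \in\, \mathfrak{g}_0$, a vertical tangent vector $\widehat{\phi}(z,\, Y)\, \in\, T_z(T^*M)$, and because $\mathcal{V}$ is $\mathscr{G}$--stable this vector actually lies in $T_z \mathcal{V}$. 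Since the projection $p$ of \eqref{0p} is $\mathscr{G}$--equivariant, its differential carries $\widehat{\phi}(z,\, Y)$ to $\phi(m,\, Y)\, \in\, \mathcal{F}^{\mathscr{G}}_m$, where $\phi$ is the homomorphism in \eqref{e6}.

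Unwinding the definition of the Liouville one--form from Section \ref{se2.2}, I would then compute
$$
\theta'_M(z)\bigl(\widehat{\phi}(z,\, Y)\bigr) \ = \ \theta_M(z)\bigl(\widehat{\phi}(z,\, Y)\bigr) \ = \ \xi\bigl(dp(\widehat{\phi}(z,\, Y))\bigr) \ = \ \xi(\phi(m,\, Y)) \ = \ \omega(v,\, \phi(m,\, Y)).
$$
Since $\phi(m,\, Y)\, \in\, \mathcal{F}^{\mathscr{G}}_m$ and $v\, \in\, (\mathcal{F}^{\mathscr{G}})^{\perp}_m$ by the very construction of $\mathcal{V}$, this last pairing vanishes. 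Combined with (i), this yields a well--defined one--form $\vartheta$ on $\mathcal{V}_{\mathscr{G}}$ whose pullback along the quotient map $\mathcal{V}\, \longrightarrow\, \mathcal{V}_{\mathscr{G}}$ recovers $\theta'_M$.

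I do not expect a genuine obstacle: the lemma is a bookkeeping check that the annihilator condition baked into the definition \eqref{e7a} of $\mathcal{V}$ is precisely the condition needed to kill the Liouville form on the $\mathscr{G}$--orbit directions. The only point worth flagging is that one must use $\mathscr{G}$--stability of $\mathcal{V}$ (Lemma \ref{lem1}) in two places --- to identify vertical vectors of the induced $\mathscr{G}$--action on $T^*M$ at points of $\mathcal{V}$ with tangent vectors of $\mathcal{V}$, and to make sense of the descended form on the quotient $\mathcal{V}_{\mathscr{G}}$.
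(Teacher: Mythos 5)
Your proposal is correct and follows essentially the same route as the paper: invariance of $\theta'_M$ from Lemmas \ref{lem1} and \ref{lem2}, plus the computation that $\theta_M$ annihilates the orbit directions because $dp$ sends the vertical vector $\widehat{\phi}(z,\,Y)$ to $\phi(m,\,Y)\,\in\,\mathcal{F}^{\mathscr{G}}_m$ while the covector $\xi\,=\,\omega'(v)$ pairs trivially with $\mathcal{F}^{\mathscr{G}}_m$ since $v\,\in\,(\mathcal{F}^{\mathscr{G}})^{\perp}_m$. This is exactly the paper's argument (your $\widehat{\phi}$ and $dp\circ\widehat{\phi}$ are the paper's $\widetilde{\eta}$ and $\eta$, and your final display is the paper's equations \eqref{e14} and \eqref{e11}).
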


\begin{proof}
As before, ${\mathfrak g}_0$ denotes the Lie algebra of $\mathscr{G}$. The action of
$\mathscr{G}$ on $T^*M$, induced by the action of $\mathscr{G}$ on $M$, produces a homomorphism
\begin{equation}\label{e12}
\widetilde{\eta}\, :\, (T^*M)\times {\mathfrak g}_0\, \longrightarrow\, T(T^*M).
\end{equation}
Consider the projection $p$ in \eqref{0p}. Let
\begin{equation}\label{e13}
dp\, :\, T(T^*M) \, \longrightarrow\, TM
\end{equation}
be the differential of $p$. Let
\begin{equation}\label{e10}
\eta\,:=\, (dp)\circ \widetilde{\eta}\, :\, (T^*M)\times {\mathfrak g}_0\, \longrightarrow\, TM
\end{equation}
be the composition of maps, where $\widetilde{\eta}$ is constructed in \eqref{e12}.

{}From Lemma \ref{lem1} we know that the action of $\mathscr{G}$ on $T^*M$ preserves
$\mathcal{V}$. Also, Lemma \ref{lem2} says that $\mathscr{G}$ preserves
$\theta_M$. Consequently, it suffices to prove the following statement:

For any $x\, \in\, M$, $u\, \in\, \mathcal{V}_x\, \subset\, T^*_xM$ and $v\, \in\,
{\mathfrak g}_0$,
\begin{equation}\label{e11}
u (\eta (u,\, v))\ =\ 0,
\end{equation}
where $\eta$ is the map in \eqref{e10}. (Note that $\eta (u,\, v)\, \in\, T_xM$
and $u (\eta (u,\, v))\, \in\, {\mathbb C}$ because $u\, \in\, T^*_xM$.)

To prove \eqref{e11}, 
first note that the maps $\phi$ (in \eqref{e6}) and $\eta$ (in \eqref{e10})
are related as follows: For any $y\, \in\, M$, $u'\, \in\, T^*_yM$ and
$v'\, \in\, {\mathfrak g}_0$,
\begin{equation}\label{e14}
\eta(u',\, v')\ = \ \phi(p(u'),\, v'),
\end{equation}
where $p$ is the projection in \eqref{0p}. Clearly, we have $\phi(p(u'),\, v')\, \in\,
\mathcal{F}^{\mathscr{G}}_y$, where $\mathcal{F}^{\mathscr{G}}$ is defined in \eqref{e20}.
Therefore, for any
$$t\, \in\, ((\mathcal{F}^{\mathscr{G}})^{\perp})_y \, \subset\, T_yM,$$
we have $\omega(y)(t,\, \phi(p(u'),\, v'))\,=\, 0$, where $\omega$ is the
nondegenerate symmetric bilinear form on $M$. This implies that
$$
\theta_M(\omega'(t)) (\phi(p(u'),\, v'))\ =\ \omega'(t)(\phi(p(u'),\, v'))
\ =\ 0.
$$
So \eqref{e14} gives that $\theta_M(\omega'(t))(\eta(u',\, v'))\,=\, 0$.
But $\theta_M(\omega'(t))(\eta(u',\, v'))\,=\, \omega'(t)(\eta(u',\, v'))$,
and hence we conclude that $\omega'(t)(\eta(u',\, v')) \, =\, 0$. From this
it follows immediately that \eqref{e11} holds. This completes the proof of the lemma.
\end{proof}

Let
\begin{equation}\label{ev}
\vartheta \in H^0(\mathcal{V}_{\mathscr{G}}, T^*\mathcal{V}_{\mathscr{G}})
\end{equation}
denote the unique $1$--form on the quotient $\mathcal{V}_{\mathscr{G}}$ in \eqref{e8}
whose pullback to $\mathcal{V}$ is $\theta'_M$; the existence of $\vartheta$ is
ensured by Lemma \ref{prop:mainprop1}. Consequently, the pullback of the $2$--form
$d\vartheta$ to $\mathcal V$ coincides with $d\theta'_M$.

\begin{proposition}\label{prop:mainprop}
The exterior derivative $d\vartheta$ (see \eqref{ev} for $\vartheta$) is a
nondegenerate $2$--form on $\mathcal{V}_{\mathscr{G}}$. In particular the total
space of $\mathcal{V}_{\mathscr{G}} \,\longrightarrow\, M/\mathscr{G}$ inherits
a symplectic structure from the symplectic structure $d\theta_M$ on $T^*M$. 
\end{proposition}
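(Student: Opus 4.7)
The plan is to verify the nondegeneracy of $d\vartheta$ fiberwise by applying Proposition \ref{prop1} at each point. Fix $z \in \mathcal{V}$, set $x \,=\, p(z)$, and take $V_1 \,=\, T_z(T^*M)$ equipped with the Liouville symplectic form $A \,=\, d\theta_M|_z$ (see \eqref{e0}) and $V_2 \,=\, T_z\mathcal{V}$. By construction, $\vartheta$ pulls back to $\theta'_M$ along the quotient map $\mathcal{V} \to \mathcal{V}_{\mathscr{G}}$, hence $d\vartheta$ pulls back to $d\theta_M|_\mathcal{V}$. Under the natural identification $T_{[z]}\mathcal{V}_{\mathscr{G}} \,\cong\, T_z\mathcal{V}/T_z(\mathscr{G}\cdot z)$, the value $d\vartheta|_{[z]}$ agrees with the alternating form on $V_2/T_z(\mathscr{G}\cdot z)$ induced by $A|_{V_2}$. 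Consequently, it suffices to identify the null subspace
\[
K \ =\ \{v \in V_2 \,\big\vert\, A(v,\,w)\,=\,0 \ \ \forall\, w \in V_2\}
\]
of Proposition \ref{prop1} with the orbital subspace $T_z(\mathscr{G}\cdot z)$.

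For the inclusion $T_z(\mathscr{G}\cdot z) \subset K$, Lemma \ref{lem1} already gives $T_z(\mathscr{G}\cdot z) \subset T_z\mathcal{V}$, so only symplectic orthogonality remains. Let $\mu \colon T^*M \to \mathfrak{g}_0^*$ be the cotangent-lift ``moment map'' $\mu(u)(v) \,=\, u(\phi(p(u),v))$, and write $\mu_v$ for its $v$-component. The computation in the proof of Lemma \ref{prop:mainprop1} (exactly the identity \eqref{e11}) shows that $\mu$ vanishes on $\mathcal{V}$. Since $\theta_M$ is $\mathscr{G}$-invariant (Lemma \ref{lem2}), Cartan's formula yields $\iota_{\widetilde{\eta}(-,v)}\, d\theta_M \,=\, -d\mu_v$, where $\widetilde{\eta}$ is as in \eqref{e12}. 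Because $\mu_v$ vanishes on $\mathcal{V}$ its differential annihilates $T_z\mathcal{V}$, so $d\theta_M(\widetilde{\eta}(z,v),\,Y)\,=\,0$ for every $Y \in T_z\mathcal{V}$, proving the inclusion.

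The reverse inclusion $K \subset T_z(\mathscr{G}\cdot z)$ is the main obstacle, since in the infinite-dimensional setting one cannot simply invoke a double-symplectic-complement identity. I would argue directly, using the canonical splitting $T_z(T^*M) \,=\, T_xM \oplus T^*_xM$ and the explicit formula $d\theta_M((X_0,U),(Y_0,V)) \,=\, V(X_0) - U(Y_0)$ recorded in Section \ref{sec:Liouvilleforgroups} (and valid in the general case). Writing $X \in K$ as $(X_0,U)$ and combining the vanishing of $d\theta_M(X,(Y_0,V))$ for all admissible test vectors with the description of $T_z\mathcal{V}$ coming from $\mathcal{V} \,=\, \omega'((\mathcal{F}^{\mathscr{G}})^{\perp})$, one concludes first that $X_0 \in \mathcal{F}^{\mathscr{G}}_x$, say $X_0 \,=\, \phi(x,v)$ for some $v \in \mathfrak{g}_0$, and then that $U$ is precisely the vertical component of the $v$-infinitesimal action on the covector $u$. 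Thus $X \,=\, \widetilde{\eta}(z,v) \in T_z(\mathscr{G}\cdot z)$.

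Once $K \,=\, T_z(\mathscr{G}\cdot z)$ is established, Proposition \ref{prop1} produces a nondegenerate alternating form on $V_2/K \,\cong\, T_{[z]}\mathcal{V}_{\mathscr{G}}$, which by the pullback compatibility above coincides with $d\vartheta|_{[z]}$. As $z$ was arbitrary, $d\vartheta$ is pointwise nondegenerate on $\mathcal{V}_{\mathscr{G}}$, and it is closed automatically since it is exact. Hence $d\vartheta$ is a symplectic form, and the total space $\mathcal{V}_{\mathscr{G}}$ inherits its symplectic structure from the Liouville form $d\theta_M$ on $T^*M$, completing the proof.
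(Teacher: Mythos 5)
Your proposal follows the same route as the paper: both reduce to Proposition \ref{prop1} applied to $V_1 \,=\, T_z(T^*M)$ and $V_2 \,=\, T_z\mathcal{V}$, and both rest on identifying the null space $K$ of the restricted Liouville form with the orbit directions $\varphi(z,\,\mathfrak{g}_0)$. Where the paper simply asserts this identification (``it is straightforward to check that $\operatorname{Ker}(\rho)$ coincides with $\varphi(q,\,\mathfrak{g}_0)$''), you supply an actual argument for the inclusion $T_z(\mathscr{G}\cdot z)\,\subset\, K$ via the cotangent--lift moment map, the $\mathscr{G}$--invariance of $\theta_M$ and Cartan's formula; that argument is correct and is more than the paper records.

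The caveat --- which applies equally to the paper's own proof --- concerns the reverse inclusion $K\,\subset\, T_z(\mathscr{G}\cdot z)$, which you rightly call the main obstacle and which is where all the content sits. Your sketch of it does, in the end, invoke exactly the double--annihilator identity you warned against: pairing $X\,=\,(X_0,\,U)\,\in\, K$ against the vertical test vectors $(0,\,V)$ with $V\,\in\,\mathcal{V}_x\,=\,\omega'\bigl((\mathcal{F}^{\mathscr{G}})^{\perp}_x\bigr)$ yields only $X_0\,\in\,\bigl((\mathcal{F}^{\mathscr{G}})^{\perp}_x\bigr)^{\perp}$, and passing from there to $X_0\,\in\,\mathcal{F}^{\mathscr{G}}_x$ \emph{is} the double--complement statement, which can fail for a subspace of an infinite--dimensional space carrying only a weakly nondegenerate pairing. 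That identity is not among the hypotheses of the abstract proposition; in the intended applications ($M\,=\,LG$, $\mathscr{G}\,=\,L_XG$) it is precisely what Theorem \ref{thm:residueann} supplies. So your proof is as complete as the paper's, takes the same approach, and has the merit of isolating correctly the one step that genuinely needs the curve--theoretic input rather than formal linear algebra.
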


\begin{proof}
In view of Proposition \ref{prop1}, the nondegeneracy of $d\vartheta$ is in fact a
consequence of the nondegeneracy of $d\theta_M$. To see this, recall that $\mathcal{V}$
is preserved by the action of $\mathscr{G}$ on $T^*M$ (see Lemma \ref{lem1}). Let
\begin{equation}\label{h0}
\varphi\, :\, \mathcal{V}\times {\mathfrak g}_0\, \longrightarrow\, T\mathcal{V}
\end{equation}
be the homomorphism given by the action of $\mathscr{G}$ on $\mathcal{V}$, where ${\mathfrak g}_0$,
as before, is the Lie algebra of $\mathscr{G}$.

Take a point
$$
q\, \in\, \omega'((\mathcal{F}^{\mathscr{G}})^{\perp})\,=\, \mathcal{V}
$$
(see \eqref{e7a}). The image of $q$ in $\mathcal{V}/\mathscr{G}\,=\, \mathcal{V}_{\mathscr{G}}$
will be denoted by $\overline{q}$. It can be shown that the tangent space $T_{\overline{q}}
\mathcal{V}_{\mathscr{G}}$ has a natural identification
\begin{equation}\label{h1}
T_{\overline{q}} \mathcal{V}_{\mathscr{G}}\ = \ (T_q\mathcal{V})/(\varphi(q,\, {\mathfrak g}_0)),
\end{equation}
where $\varphi$ is the map in \eqref{h0}. Indeed, \eqref{h1} is an immediate consequence
of the properties of a quotient space.

The Liouville symplectic form $d\theta_M$ on $T^*M$ (see \eqref{e0}) produces a
a fiber--wise injective homomorphism
\begin{equation}\label{h2}
\beta\, :\, T(T^*M)\, \hookrightarrow\, T^*(T^*M)
\end{equation}
(see \eqref{e7}).

Using the inclusion map $\mathcal{V}\, \hookrightarrow \, T^*M$ in \eqref{e7a}, we have
the maps 
\begin{equation}\label{eqn:fund2}
T_q \mathcal{V}\, \hookrightarrow\, T_q(T^*M)\, \xrightarrow{\,\,\,\beta(q)\,\,\,}\,
T^*_q(T^*M) \, \twoheadrightarrow\, T^*_q \mathcal{V},
\end{equation}
where $\beta$ is the homomorphism in \eqref{h2}. Let
$$
\rho\, :\, T_q \mathcal{V}\, \longrightarrow\, T^*_q \mathcal{V}
$$
be the composition of maps in \eqref{eqn:fund2}. It is straightforward to check that
$\text{kernel}(\rho)$ coincides with $\varphi(q,\, {\mathfrak g}_0)$. From \eqref{h1}
we know that the quotient $(T_q \mathcal{V})/(\varphi(q,\, {\mathfrak g}_0))$
coincides with $T_{\overline{q}} \mathcal{V}_{\mathscr{G}}$.
Consequently, using Proposition \ref{prop1} we conclude that
$d\vartheta$ is nondegenerate. Hence $d\vartheta$ is a symplectic form on $\mathcal{V}_{\mathscr{G}}$.
\end{proof}

\section{Symplectic structure on moduli of Higgs bundles with framings}

Let $G$ be a semisimple and simply connected affine algebraic group defined over $\mathbb{C}$. 
The Lie algebra of $G$ will be denoted by $\mathfrak{g}$. Let $X$ be an irreducible
smooth complex projective curve equipped
with a marked point $p \,\in\, X$. We first recall the uniformization of the stack of 
principal $G$--bundles on $X$.

\subsection{Uniformization of principal $G$--bundles}

Let $t$ be a formal parameter considered as a holomorphic coordinate at the point $p\,\in\, X$. By 
Harder's theorem, any principal $G$--bundle on $X \backslash \{p\}$ is trivial \cite{Ha}.
Hence by the uniformization theorem \cite{Faltings:94, KNR:94, BeauvilleLaszlo:94}, the following
is obtained:

The $\mathbb{C}$--valued points of the moduli stack of principal $G$--bundles on
the projective curve $X$ can be described as the double quotient
\begin{equation}
\label{eqn:uniform}
\operatorname{Bun}_G(X)\ =\ G[[t]]\backslash G((t)) / G(X\backslash \{p\}),
\end{equation}
where $G(X\backslash \{p\})$ is the space of algebraic maps from $X\backslash \{p\}$ to the 
group $G$. For notational conveniences, the loop group $G((t))$ is also denoted by 
$LG$, while the group $G[[t]]$ of positive loops is denoted by $L^+G$ and the
subgroup $G(X\backslash \{p\})$ is denoted by $L_XG$. So, \eqref{eqn:uniform} can be re--written as
$$
\operatorname{Bun}_G(X)\ =\ L^+G\backslash LG / L_XG.
$$

We can consider the elements of the loop group $LG$ as the $\mathbb{C}$--valued points of the 
moduli stack of principal $G$--bundles on $X$ equipped with a chosen trivialization on the complement
$X\backslash \{p\}$ and a chosen trivialization on the formal disc $D_{p}$ around the point 
$\{p\}$. Similarly the ind--variety $L^+G \backslash LG$ parametrizes principal $G$--bundle on 
$X$ equipped with a chosen trivialization on the complement $X\backslash \{p\}$.

Now consider the cotangent bundle $T^*LG$ of the loop group $LG$.
The cotangent bundle $T^*LG$ is trivial, in fact, it is identified with the
trivial vector bundle
\begin{equation}\label{e15}
S\ :=\ LG\times \left( \mathfrak{g}\otimes K_{D_{p}}((t))\right);
\end{equation}
here we have 
identified $\mathfrak{g}$ with its dual $\mathfrak{g}^*$
using the normalized Cartan--Killing form $(-,\, -)$ on $\mathfrak g$. The term $K_{D_{p}}$ in
\eqref{e15}
is the canonical bundle of a formal neighborhood $D_p$ of $p\, \in\, X$. The group $LG$ acts on the cotangent 
bundle $S$ of $LG$.

The loop group $LG$ carries a natural nondegenerate symmetric bilinear form. It is defined as follows:
\begin{equation}\label{e16}
\langle A\otimes t^m,\, B\otimes t^n\rangle \ =\ \delta_{n+m,0}\left(A,\, B\right),
\end{equation}
where $A,\, B\, \in\, {\mathfrak g}$, and $\left(- ,\, -\right)$ is the normalized Cartan--Killing
form on $\mathfrak{g}$, while
$\delta_{i,j}\,=\, 0$ if $i\, \not=\, j$ and it is $1$ if $i\,=\, j$ (see \cite{PS}).
Note that \eqref{e16} defines a bilinear form $\langle -, \, -\rangle$ on the vector space
${\mathfrak g}((t))$ using bilinearity. The bilinear form on ${\mathfrak g}((t))$ is
evidently symmetric. It is also nondegenerate, meaning the homomorphism
\begin{equation}\label{j2}
{\mathfrak g}((t)) \ \longrightarrow\ {\mathfrak g}((t))^*
\end{equation}
given by the pairing is injective. To see the injectivity of the homomorphism in \eqref{j2},
for any $A\otimes t^m$, where $A\, \in\, {\mathfrak g}$, take any $B\, \in\, {\mathfrak g}$
such that $(A,\, B)\, \not=\, 0$. Now, clearly we have
$\langle A\otimes t^m,\, B\otimes t^{-m}\rangle \, \not=\, 0$, and hence the bilinear
form on ${\mathfrak g}((t))$ is nondegenerate.

So \eqref{e16} defines a nondegenerate symmetric bilinear form on the tangent space of $LG$ at the identity
element of $LG$. Now extend this to a nondegenerate symmetric bilinear form on $LG$ using left--translations.

This nondegenerate symmetric bilinear form on $LG$ given by \eqref{e16} will be denoted by
$\omega$. As in \eqref{e7}, let
\begin{equation}\label{e21}
\omega'\ :\ T(LG) \ \hookrightarrow\ T^*(LG)
\end{equation}
be the homomorphism given by $\omega$; note that $\omega'$ is fiber--wise injective because
\begin{itemize}
\item $\omega'$ injective on the fiber $T_e(LG)$ over the identity element of $LG$, and 

\item $\omega'$ is $LG$--equivariant.
\end{itemize}

{}From the point of view of Proposition \ref{prop:mainprop}, the role of $M$ in 
that proposition will be played by $LG$ while the role of
$\mathcal G$ will be played by $L_XG$. In the rest of this section we will consider various 
subbundles of $T^*LG$ to which Proposition \ref{prop:mainprop} can be applied.

\subsection{Principal bundles with meromorphic Higgs fields}\label{sec:arbitraryframing}

Let $E_G$ be a principal $G$--bundle on $X$. The adjoint bundle for $E_G$ will be denoted by
$\text{ad}(E_G)$; we recall that $\text{ad}(E_G)$ is
the Lie algebra bundle on $X$ associated to the principal $G$--bundle $E_G$ for the
adjoint action of $G$ on its Lie algebra $\mathfrak g$. A \textit{meromorphic Higgs field} on
$E_G$ is a meromorphic section of the direct limit
\begin{equation}\label{vp}
\varphi\, \in\, H^0(X,\, \text{ad}(E_G)\otimes K_X(\ast p)) \,=\,
\lim_{i\rightarrow \infty} H^0(X,\, \text{ad}(E_G)\otimes K_X{\mathcal O}_X(i p)),
\end{equation}
where $K_X$, as before, is the canonical line bundle of $X$;
the above direct limit is constructed using the natural inclusion maps
$\text{ad}(E_G)\otimes{\mathcal O}_X(j p)\, \hookrightarrow\, \text{ad}(E_G)\otimes{\mathcal O}_X((j+k) p)$
where $k\, \geq\, 0$.

A \textit{meromorphic principal Higgs} $G$--\textit{bundle} is a principal $G$--bundle
on $X$ equipped with a meromorphic Higgs field.

We will construct a subbundle
\begin{equation}\label{e17}
\mathcal{W} \ \subset \ T^*LG
\end{equation}
of the cotangent bundle $T^*LG$. To describe the fibers of $\mathcal{W}$ point--wise, take any 
point $\alpha\,\in\, LG$. The point $\alpha$ gives a principal $G$--bundle $E_G$ on $X$ with a 
given trivialization of $E_G$ on $X\backslash \{p\}$ and a given trivialization of $E_G$ on 
formal completion $D_{p}$. Now consider the adjoint vector bundle $\ad(E_G)$ on $X$. Note that the
elements of $\mathfrak{g}\otimes K_{D_{p}}((t))$ can be considered as sections of $(\ad(E_G)\otimes 
K_X)_{D_{p}}$ with pole of arbitrary order at $p$. The fiber $\mathcal{W}_\alpha$ consists of all 
elements of $\mathfrak{g}\otimes K_{D_{p}}((t))$ that extend to a section of $\ad(E_G)\otimes 
K_X$ over the complement $X\setminus \{p\}$. So the only point of $X$ where such a section can 
have pole is $p$; the order, at $p$, of the pole of this meromorphic section can be arbitrary.

Recall that $L_XG\,\hookrightarrow\, LG$, and consequently $L_XG$ acts on $LG$ via left--translations.
This action of $L_XG$ on $LG$ induces actions of $L_XG$ on both $TLG$ and $T^*LG$.

\begin{lemma}\label{lem2a}
The action of $L_XG$ on $T^*LG$, induced by the action of $L_XG$ on $LG$, preserve the
subbundle $\mathcal{W}$ in \eqref{e17}.
\end{lemma}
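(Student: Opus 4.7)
The plan is to exploit the fact that the fiber $\mathcal{W}_\alpha$ depends on $\alpha$ only through the principal $G$--bundle $E_G^\alpha$ presented by $\alpha$ together with its trivialization over $D_p$, and that left-translation by $h \in L_XG$ modifies $\alpha$ in a way that produces an isomorphic bundle whose trivialization over $D_p$ is unchanged.

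First I would make the geometric picture explicit. The point $\alpha \in LG$ encodes the bundle $E_G^\alpha$ built by gluing the trivial bundles on $X \setminus \{p\}$ and $D_p$ along the transition $\alpha \in G(D_p^*)$. Since $h \in L_XG$ is regular on all of $X \setminus \{p\}$, the passage from $\alpha$ to $h\alpha$ can be realized on the underlying geometry by keeping the $D_p$-trivialization of $E_G^\alpha$ fixed and modifying only the trivialization on $X\setminus\{p\}$ through pointwise multiplication by $h^{-1}$. This yields a canonical isomorphism $\psi\colon E_G^\alpha \xrightarrow{\sim} E_G^{h\alpha}$ which is the identity in the trivialization over $D_p$; hence the induced map $\ad(\psi)\colon \ad(E_G^\alpha)\to \ad(E_G^{h\alpha})$ acts as the identity on Laurent expansions at $p$ taken in the $D_p$-trivialization.

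Next I would combine this with the definition of the fiber. By construction, $\mathcal{W}_\alpha$ consists of those elements of $\mathfrak{g}\otimes K_{D_p}((t))$ that arise as the Laurent expansion at $p$, in the $D_p$-trivialization, of some section $\Phi \in H^0(X,\ad(E_G^\alpha)\otimes K_X(\ast p))$. The trivialization of $T^*LG$ used in \eqref{e15} is left-invariant (as in Section \ref{sec:Liouvilleforgroups}), so left-translation by $h$ acts on $T^*LG$ as $(\alpha,\phi)\mapsto (h\alpha,\phi)$, trivially on the fiber coordinate. Given $\phi \in \mathcal{W}_\alpha$ arising from $\Phi$, the section $\ad(\psi)(\Phi) \in H^0(X,\ad(E_G^{h\alpha})\otimes K_X(\ast p))$ has Laurent expansion at $p$ equal to $\phi$, so $\phi \in \mathcal{W}_{h\alpha}$, establishing the invariance of $\mathcal{W}$.

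The main point requiring care is the geometric bookkeeping in the first step: identifying the direction of the isomorphism $\psi$, and verifying that with the conventions of \eqref{e15} the $L_XG$-action on the fiber of $T^*LG$ matches transport of sections through $\psi$. Once this compatibility is pinned down, the lemma reduces to the purely geometric observation that $h$ is regular on $X\setminus\{p\}$, so conjugation by $h$ bijectively carries sections of $\ad(E_G^\alpha)\otimes K_X$ over $X\setminus\{p\}$ to sections of $\ad(E_G^{h\alpha})\otimes K_X$ over $X\setminus\{p\}$.
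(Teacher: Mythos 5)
Your proof is correct and follows essentially the same route as the paper's: the action of $L_XG$ fixes the underlying bundle and its trivialization over $D_p$, changing only the trivialization over $X\setminus\{p\}$, so the subspace of $\mathfrak{g}\otimes K_{D_p}((t))$ cut out by the extension condition is unchanged. The only caveat is the left-versus-right translation convention (the paper's double quotient $G[[t]]\backslash LG/L_XG$ has $L_XG$ acting by $\beta\mapsto\beta\alpha$, and its proof uses exactly that), but you flag this bookkeeping explicitly and it does not affect the substance of the argument.
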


\begin{proof}
We recall that an element $\alpha$ of $LG$ gives a principal $G$--bundle $E_G$ on $X$ with given 
trivializations of $E_G$ over $X\backslash \{p\}$ and the formal
completion $D_{p}$. It is easy to see that we have
$\alpha \,\in\, L_XG\, \subset\, LG$ if and only if the trivialization of $E_G$ over $D_{p}$ extends to a
trivialization of $E_G$ over entire $X$. So we get two trivializations
of $E_G\big\vert_{X\backslash \{p\}}$: One given directly by $\alpha$ and the other obtained by
extending, to entire $X$, the trivialization of $E_G\big\vert_{D_p}$ given by $\alpha$.
These two trivializations of $E_G\big\vert_{X\backslash \{p\}}$ differ by an automorphism 
of $E_G\big\vert_{X\backslash \{p\}}$.

Now take any $\beta\, \in\, LG$. it gives principal $G$--bundle $F_G$ on $X$ with given
trivializations of $F_G$ over $X\backslash \{p\}$ and $D_{p}$. As before, take any
$\alpha \,\in\, L_XG$. Then the element $\beta\alpha\, \in\, LG$ gives the same principal
$G$--bundle $F_G$ on $X$ (the principal $G$--bundle given by $\beta$), and the trivialization
of $F_G$ over $D_{p}$ for $\beta\alpha$ remains unchanged (it coincides with the one given by
$\beta$). But the trivialization of $F_G$ over $X\backslash \{p\}$ for
$\beta\alpha$ changes by the automorphism of the trivial principal $G$--bundle given by $\alpha$.

Take $\beta\, \in\, LG$ as above. Then the fiber $\mathcal{W}_\beta$ of $\mathcal{W}$ (see 
\eqref{e17}) over $\beta$ is canonically identified with the space of all meromorphic Higgs 
fields on the principal $G$--bundle $F_G$ given by $\beta$.

{}From the above descriptions of $\mathcal{W}_\beta$, and the action of $L_XG$ on $LG$, it follows
immediately that the action of $L_XG$ on $T^*LG$, induced by the action of $L_XG$ on $LG$,
preserve the subbundle $\mathcal{W}$ in \eqref{e17}.
\end{proof}

As mentioned before, from the point of view of Proposition \ref{prop:mainprop} the role of
$\mathscr{G}$ will be played 
by $L_XG$. Consider the quotient $\mathcal{W}/{L_XG}$ which is a subbundle of 
the quotient $\left(T^*LG\right)/{L_XG}$. Note that
$$
\mathcal{W}/{L_XG} \ \subset\ \left(T^*LG\right)/{L_XG}
$$
are vector bundles on the space $LG/L_XG$. It may be mentioned that $\mathcal{W}/{L_XG}$ is
precisely the moduli stack of principal $G$--bundles $E_G$ on $X$, equipped with
\begin{itemize}
\item an arbitrary order framing of $E_G$ at $p$ (meaning a trivialization of $E_G$ over
$D_{p}$), and

\item a meromorphic Higgs field on $E_G$.
\end{itemize}

We are in a position to prove the following theorem.
 
\begin{theorem}\label{thm:main1}
Consider the moduli stack $\mathcal{W}/{L_XG}$ parametrizing principal $G$--bundles on
$X$ equipped with an arbitrary order framing at $p$ 
and a meromorphic Higgs field. It inherits a canonical symplectic 
structure constructed using the Liouville symplectic structure on $T^*LG$.
\end{theorem}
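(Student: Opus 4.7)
The plan is to apply Proposition \ref{prop:mainprop} with $M = LG$ and $\mathscr{G} = L_XG$ acting on $LG$ by right multiplication, taking $\omega$ to be the nondegenerate symmetric bilinear form on $LG$ obtained by left--translating the pairing \eqref{e16} on $\mathfrak{g}((t)) = T_e(LG)$. Three hypotheses of Proposition \ref{prop:mainprop} must be verified: that the $L_XG$ action on $LG$ preserves $\omega$; that the subbundle $\mathcal{W}\subset T^*LG$ is $L_XG$--invariant; and, most importantly, that $\mathcal{W}$ coincides with the subbundle $\omega'((\mathcal{F}^{L_XG})^{\perp})$ of Proposition \ref{prop:mainprop}, where $\omega'$ is the homomorphism \eqref{e21} and $\mathcal{F}^{L_XG}$ is the orbit distribution on $LG$.

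The first hypothesis follows from bi--invariance of $\omega$: the Cartan--Killing form on $\mathfrak{g}$ is $\operatorname{Ad}$--invariant, so a pointwise computation shows \eqref{e16} is $\operatorname{Ad}(LG)$--invariant on $\mathfrak{g}((t))$. Combined with the defining left--invariance of $\omega$, this makes $\omega$ bi--invariant, and in particular invariant under the right action of $L_XG\subset LG$. The second hypothesis is exactly Lemma \ref{lem2a}.

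The crux of the proof is the third assertion, and this is where Theorem \ref{thm:residueann} is used. Fix $\alpha\in LG$, with associated principal $G$--bundle $E_G$ on $X$ together with its two trivializations (over $X\setminus\{p\}$ and over $D_p$) encoded by $\alpha$. Under a consistent choice of trivializations I would identify $T_\alpha LG\cong \mathfrak{g}((t))$ and $T^*_\alpha LG\cong \mathfrak{g}\otimes K_{D_p}((t))$ so that the tangent space at $\alpha$ to the $L_XG$--orbit corresponds, under \eqref{e1}, to the subspace $H^0(X,\ad(E_G)(\ast p))\subset \mathfrak{g}((t))$, while by its very construction the fiber $\mathcal{W}_\alpha$ corresponds, under \eqref{e2}, to $H^0(X,\ad(E_G)\otimes K_X(\ast p))\subset \mathfrak{g}\otimes K_{D_p}((t))$. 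Under these identifications the homomorphism $\omega'$ of \eqref{e21} is precisely the identification of $\mathfrak{g}((t))^*$ with $\mathfrak{g}\otimes K_{D_p}((t))$ furnished by the residue pairing $\mathcal{R}$ of \eqref{e3} (up to the customary index--shift built into \eqref{e16}). Applying Theorem \ref{thm:residueann} with $n=1$ and $Q_1=p$, these two subspaces are mutual annihilators under $\mathcal{R}$, and hence $\omega'((\mathcal{F}^{L_XG})_\alpha^{\perp}) = \mathcal{W}_\alpha$ at every $\alpha$, so $\mathcal{W}=\omega'((\mathcal{F}^{L_XG})^{\perp})$ as subbundles of $T^*LG$.

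With all three hypotheses verified, Proposition \ref{prop:mainprop} immediately gives the result: the restriction of the Liouville symplectic form $d\theta_{LG}$ on $T^*LG$ to $\mathcal{W}$ descends to a nondegenerate closed $2$--form on $\mathcal{W}/L_XG$, which is the desired canonical symplectic structure. The main technical obstacle is the bookkeeping in the identification step: one must use the same trivialization of $\ad(E_G)|_{D_p}$ when embedding both the orbit tangent space and the Higgs--field fiber into Laurent expansions, so that the residue pairing coming from \eqref{e16} matches the residue pairing $\mathcal{R}$ of Theorem \ref{thm:residueann}; the two trivializations of $E_G$ attached to $\alpha$ play different roles, so compatibility between the left--invariant trivialization of $T^*LG$ and the framing at $p$ needs care.
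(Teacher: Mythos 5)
Your proposal is correct and follows essentially the same route as the paper: both reduce the theorem to Proposition \ref{prop:mainprop} with $M=LG$ and $\mathscr{G}=L_XG$, identify the orbit distribution fiber at $\alpha$ with $H^0(X,\ad(E_G)(\ast p))\subset\mathfrak{g}((t))$ via Laurent expansion in the framing at $p$, and invoke Theorem \ref{thm:residueann} (with $n=1$, $Q_1=p$) to conclude that $\omega'((\mathcal{F}^{L_XG})^{\perp})=\mathcal{W}$. Your explicit check that $\omega$ is preserved by the $L_XG$--action via $\operatorname{Ad}$--invariance of the Cartan--Killing form is a hypothesis the paper leaves implicit, but it does not change the argument.
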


\begin{proof}
We need to put ourselves in the set--up of Proposition \ref{prop:mainprop}
in order to apply it. As in \eqref{e20}, construct the subsheaf
\begin{equation}\label{e23}
\mathcal{F}^{{L_XG}}\ \subset \ T(LG)
\end{equation}
using the left--translation action of the subgroup $L_XG$ on $LG$. Let $$(\mathcal{F}^{{L_XG}})^\perp
\, \subset \, T(LG)$$ be the annihilator of $\mathcal{F}^{{L_XG}}$ in
\eqref{e23} for the nondegenerate symmetric bilinear form 
$\omega$ on $LG$ (see \eqref{e21} and \eqref{e16}). So we have
\begin{equation}\label{e22}
{\mathcal V} \ :=\ \omega'(\mathcal{F}^{{L_XG}})\ \subset \ T^*(LG),
\end{equation}
where $\omega'$ is the homomorphism in \eqref{e21}.

To prove the theorem it is enough to show that the vector subbundle
$\mathcal{V}\, \subset\, T^*LG$ (see \eqref{e22}) coincides with the subbundle
$\mathcal{W} \ \subset \ T^*LG$ in \eqref{e17}.

As before, $t$ is a formal parameter considered as a holomorphic coordinate
at the point $p\,\in\, X$.
Take any element $\alpha \,\in\, LG$. The fiber $T_\alpha (LG)$ of the tangent bundle
$T(LG)$ over the point $\alpha$ is identified with ${\mathfrak g}((t))$. Indeed, this
follows immediately from the fact that the Lie algebra of the loop group $LG$ is
${\mathfrak g}((t))$.

Let $E_G$ denote the principal $G$--bundle on $X$ given by $\alpha\, \in\, LG$.
Recall that $\alpha$ gives a trivialization of the principal $G$--bundle $E_G$ over
the formal disc $D_p$ around the point $p\, \in\, X$. This trivialization of
the principal $G$--bundle $E_G$ over $D_p$ produces a trivialization of
the adjoint vector bundle $\ad(E_G)$ over $D_p$. More precisely, the restriction
of $\ad(E_G)$ to $D_p$ is identified with the trivial Lie algebra bundle $D_p
\times{\mathfrak g}\,\longrightarrow\,D_p$ over $D_p$ with fiber $\mathfrak g$. 

Using this trivialization of $\ad(E_G)$ over $D_p$,
for any $\sigma \, \in\, H^0(X,\, \ad(E_G)\otimes\mathcal{O}_X(j p))$, where $j\, \geq\,1$,
by taking the Laurent expansion of $\sigma$ around $p$ we get an element of
${\mathfrak g}((t))\,=\,T_\alpha (LG)$. Therefore, we have an injective homomorphism
\begin{equation}\label{e24}
H^0(X,\, \ad(E_G)\otimes\mathcal{O}_X(\ast p))\,:=\,
\lim_{j\rightarrow \infty} H^0(X,\, \ad(E_G)\otimes\mathcal{O}_X(j p))
\, \hookrightarrow\, {\mathfrak g}((t))\,=\,T_\alpha (LG).
\end{equation}
It should be mentioned that the above homomorphism $H^0(X,\, \ad(E_G)\otimes\mathcal{O}_X(\ast p))\,
\hookrightarrow\, {\mathfrak g}((t))$ is Lie algebra structure preserving.

The fiber $(\mathcal{F}^{L_XG})_\alpha$ of $\mathcal{F}^{L_XG}$ (see \eqref{e23})
is the Lie algebra $H^0(X,\, \ad(E_G)\otimes\mathcal{O}_X(\ast p))$; it should be
clarified that $H^0(X,\, \ad(E_G)\otimes\mathcal{O}_X(\ast p))$ is considered as a
subspace of $T_\alpha (LG) \,=\, {\mathfrak g}((t))$ using the homomorphism in \eqref{e24}.
That $(\mathcal{F}^{L_XG})_\alpha \,=\, H^0(X,\, \ad(E_G)\otimes\mathcal{O}_X(\ast p))$
is a straightforward consequence of the fact that the Lie algebra of $L_XG$ is
$$\mathfrak{g}\otimes H^0(X, \mathcal{O}_X(\ast p))\ =\ \mathfrak{g}\otimes (\lim_{j\rightarrow \infty} 
H^0(X,\, \mathcal{O}_X(j p))).$$

Now from Theorem \ref{thm:residueann} it follows immediately that $$(H^0(X, \ad(E_G)(\ast p)))^{\perp}
\ =\ (H^0(X, \ad(E_G)\otimes K_X(\ast p))).$$ This proves the assertion that
the vector subbundle
$\mathcal{V}\, \subset\, T^*LG$ (see \eqref{e22}) actually coincides with the subbundle
$\mathcal{W} \ \subset \ T^*LG$ in \eqref{e17}. As observed before, this statement completes
the proof of the theorem.
\end{proof}

\subsection{Higgs fields with pole of bounded order}\label{se-hf}

Fix a positive integer $k$. We will consider Higgs fields with pole at $p$ of order
at most $k$. Let $E_G$ be a principal $G$--bundle on $X$. A \textit{Higgs field on} $E_G$
\textit{with a pole of order $k$} is an element of $H^0(X,\, \text{ad}(E_G)\otimes K_X
\otimes {\mathcal O}_X(kp))$, where $\text{ad}(E_G)$ is the adjoint bundle for $E_G$.

Before we proceed further we recall the notion of framing of a principal $G$ bundle at a point $p\in X$. 
\begin{definition}\label{def:kframing}
A {\em $k$--th order framing } of a principal $G$ bundle $E_G$ at a point $p$ is a choice of a trivialization of ${E_{G}}_{|(k+1)p}$.
\end{definition}
We now consider the moduli stack of principal $G$--bundles on $X$ with a
$k$--th order framing and a Higgs field with pole of order 
$k$. Following the strategy of Section \ref{sec:arbitraryframing}, a symplectic
structure on it will be constructed.

Consider $\mathcal W$ constructed in \eqref{e17}.
Let $\mathcal{W}^{k}$ be the subbundle of $\mathcal{W}$
which is described fiber--wise as follows: Take any $\alpha\,\in\, LG$. Denote by $E_G$
the principal $G$--bundle on $X$ given by $\alpha$.
Recall that the fiber $\mathcal{W}_{\alpha}$ of $\mathcal{W}$ over $\alpha$
consists of all elements of $\mathfrak{g}\otimes K_{D_{p}}((t))$ that 
extend to a meromorphic section of $\ad(E_G)\otimes K_X$ with allowed pole only at
the fixed point $p$ (so they are holomorphic on the complement $X\setminus \{p\}$).
Similarly define $\mathcal{W}^{k}_{\alpha}$ be the subspace of 
$\mathcal{W}_{\alpha}$ whose elements have pole of order at most $k$ at $p$.

Let $G^{k}[[t]]$ be the subgroup of the group of positive loops $G[[t]]$ consisting of 
elements of the form $e+ \sum_{j=0}^{\infty}g_jt^{k+j}$, where $e$ is the identity
element of $G$. 
Note that the group $G^{k}[[t]]\times L_X G$ acts on the loop group $LG$ and hence it also acts
on $T^*LG$.

We have the following analog of Lemma \ref{lem2a}.

\begin{lemma}\label{lem4}
The action of the group $G^{k}[[t]]\times L_XG$ on $T^*LG$ preserves the
above subbundle $\mathcal{W}^{k}$, thus producing a vector bundle
$\mathcal{W}^{k}_{G^{k}[[t]]\times L_X G}$ on $G^{k}[[t]]\backslash LG/L_XG$.
\end{lemma}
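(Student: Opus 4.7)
The plan is to decompose the statement into three parts: that $L_XG$ preserves $\mathcal{W}^k$, that $G^k[[t]]$ preserves $\mathcal{W}^k$, and that the two commuting actions together yield a vector bundle on the double quotient $G^k[[t]]\backslash LG/L_XG$. The first part will essentially follow by restriction from Lemma \ref{lem2a}, while the second is the genuinely new ingredient. The main technical point will be controlling the gauge change that $G^k[[t]]$ induces on Higgs fields, and showing that the congruence condition $g \equiv e \pmod{t^k}$ is precisely what preserves the filtration by order of pole.

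First I would observe that the two actions commute: elements of $L_XG$ act on $LG$ by right translation (as is implicit in the proof of Lemma \ref{lem2a}, where $\beta$ is moved to $\beta\alpha$), while elements of $G^k[[t]] \subset L^+G$ act by left translation, and the induced cotangent lifts therefore commute as well. For the $L_XG$ part, I would use the fiberwise description in the proof of Lemma \ref{lem2a}: an element of $\mathcal{W}_\beta$ corresponds intrinsically to a meromorphic Higgs field on the principal $G$--bundle $F_G$ given by $\beta$, and the right action of $\alpha \in L_XG$ does not change $F_G$ nor the trivialization on $D_p$ used to take the Laurent expansion. Hence the order of pole at $p$ is intrinsic, so the subspace $\mathcal{W}^k \subset \mathcal{W}$ cut out by the condition ``pole of order at most $k$ at $p$'' is preserved by $L_XG$.

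The substantial part is the $G^k[[t]]$--action. Here I would work explicitly in the trivialization \eqref{e15}. An element $g \in G^k[[t]] \subset G[[t]]$ does not change the underlying bundle $F_G$ corresponding to $\beta \in LG$, but it alters the trivialization of $F_G$ over $D_p$ by the gauge element $g$ (while leaving the trivialization over $X \setminus \{p\}$ untouched). Under such a change of trivialization, a Higgs field, when expressed in $\mathfrak{g} \otimes K_{D_p}((t))$, transforms by the adjoint action $\varphi \mapsto \operatorname{Ad}(g^{-1})\varphi$. Writing $g = e + \sum_{j\geq 0} g_j t^{k+j}$ with $g_j \in \mathfrak{g}$ (up to the usual exponential identification), the condition $g \equiv e \pmod{t^k}$ yields $\operatorname{Ad}(g^{-1}) \equiv \operatorname{id} \pmod{t^k}$ as an automorphism of $\mathfrak{g}((t))$. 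Thus for any $\varphi = \sum_{n \geq -k} a_n t^n\, dt$, the commutator terms $[g_j t^{k+j}, a_n t^n]$ all have order $\geq k+j+n \geq 0$, so $\operatorname{Ad}(g^{-1})\varphi$ again has pole of order at most $k$ at $p$. This shows $\mathcal{W}^k$ is $G^k[[t]]$--stable.

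Finally, since $\mathcal{W}^k$ is stable under the commuting actions of $G^k[[t]]$ and $L_XG$, and both actions are linear on fibers and cover the respective translation actions on $LG$, the quotient $\mathcal{W}^k/(G^k[[t]] \times L_XG)$ inherits the structure of a vector bundle over the double quotient $G^k[[t]]\backslash LG / L_XG$. The main obstacle I anticipate is the bookkeeping for the $G^k[[t]]$--action: one must verify that the cotangent lift to $T^*LG$, when written in the left trivialization \eqref{e15}, really is given by the expected gauge transformation $\operatorname{Ad}(g^{-1})$ on the $\mathfrak{g}\otimes K_{D_p}((t))$--factor, so that the pole--order estimate above genuinely applies to the induced action.
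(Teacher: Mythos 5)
Your proof is correct, but it establishes the invariance by a different mechanism than the paper does. You verify directly that the lift of $g\in G^{k}[[t]]$ acts on the fibre $\mathfrak{g}\otimes K_{D_p}((t))$ by the gauge transformation $\operatorname{Ad}(g^{-1})$, and that the congruence $g\equiv e\ (\mathrm{mod}\ t^{k})$ forces $\operatorname{Ad}(g^{-1})\equiv \mathrm{id}\ (\mathrm{mod}\ t^{k})$, so the condition ``pole of order at most $k$'' is preserved; the $L_XG$--part and the descent to the double quotient are handled exactly as in Lemma \ref{lem2a}. The paper instead computes, via the residue pairing \eqref{e3}, that the annihilator of $\operatorname{Lie}(G^{k}[[t]])=t^{k}\mathfrak{g}[[t]]$ is $t^{-k}\mathfrak{g}[[t]]dt$; in other words it identifies the extra condition cutting $\mathcal{W}^{k}$ out of $\mathcal{W}$ as the annihilator of the orbit directions of the factor $G^{k}[[t]]$, so that (together with Theorem \ref{thm:residueann}) $\mathcal{W}^{k}$ becomes $\omega'\bigl((\mathcal{F}^{G^{k}[[t]]\times L_XG})^{\perp}\bigr)$, and invariance follows from the general Lemma \ref{lem1} combined with the argument of Lemma \ref{lem2a}. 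Your route is more elementary and self--contained for the invariance statement itself; the paper's route buys more, because exhibiting $\mathcal{W}^{k}$ as the annihilator of the full orbit distribution is precisely what is needed to apply Proposition \ref{prop:mainprop} and obtain the symplectic structure in Theorem \ref{th-j-f}, an identification your argument would still owe. The bookkeeping point you flag at the end is real but harmless: in the trivialization of \eqref{e15} in which the fibre is read through the $D_p$--trivialization of $\operatorname{ad}(E_G)$, right translation by $L_XG$ acts trivially on the fibre (so pole order is manifestly preserved there) and left translation by $g\in G[[t]]$ acts by $\operatorname{Ad}(g^{\pm 1})$, and either sign of the exponent satisfies your order estimate.
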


\begin{proof}
The Lie algebra of $G^{k}[[t]]$ is the subspace
$$
t^k\cdot {\mathfrak g}[[t]] \ \subset\ \ \in\ {\mathfrak g}[[t]].
$$
Consider the residue pairing ${\mathcal R}$ in \eqref{e3}. It can be shown that
the annihilator of the above subspace $t^k\cdot {\mathfrak g}[[t]]$ for $\mathcal R$ is
$t^{-k} {\mathfrak g}[[t]]dt$. Indeed, clearly,
$$
{\mathcal R}(t^k\cdot {\mathfrak g}[[t]],\, t^{-k} {\mathfrak g}[[t]]dt)\ =\ 0
$$
because for any $v\, \in\, t^k\cdot {\mathfrak g}[[t]]$ and $w\, \in\,
t^{-k} {\mathfrak g}[[t]]dt$, their tensor product $v\otimes w$ does not have a pole.
So the annihilator of $t^k\cdot {\mathfrak g}[[t]]$ contains $t^{-k} {\mathfrak g}[[t]]dt$.

To prove that the annihilator of $t^k\cdot {\mathfrak g}[[t]]$ is contained
in $t^{-k} {\mathfrak g}[[t]]dt$, take any
$$
w\ \in\ {\mathfrak g}((t))dt\, \setminus \, t^{-k} {\mathfrak g}[[t]]dt
$$
lying in the complement. Let $t^{-k-\ell}\beta$ be the first nonzero term of $w$; so
$\ell \, \geq\, 1$ and $\beta\, \in\, {\mathfrak g}$. Take any $\beta'\, \in\,
{\mathfrak g}$ such that
$$
(\beta\, ,\,\beta') \ \not= \ 0,
$$
where $(-\, ,\,-)$ is the normalized Cartan--Killing form on $\mathfrak g$. Now note
that
$$
{\mathcal R}(t^{k+\ell-1}\beta',\, w)\ =\ (\beta\, ,\,\beta') \ \not= \ 0.
$$
Consequently, the annihilator of $t^k\cdot {\mathfrak g}[[t]]$ is contained
in $t^{-k} {\mathfrak g}[[t]]dt$.

In view of the above observation, the lemma follows by using the argument in
the proof of Lemma \ref{lem2a}.
\end{proof}

Lemma \ref{lem4} allows us to apply Proposition \ref{prop:mainprop}, and we get 
the following theorem.

\begin{theorem}\label{th-j-f}
Consider the moduli stack parametrizing the principal $G$--bundles $E_G$ on $X$ with $k$--th order 
framing of $E_G$ at $p$ and a meromorphic Higgs field on $E_G$ with a pole of order at most
$k$ at $p$. This moduli stack has
a canonical symplectic structure coming from the Liouville symplectic structure on $T^*LG$.
\end{theorem}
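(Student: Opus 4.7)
The plan is to apply Proposition~\ref{prop:mainprop} with $M = LG$ equipped with the left--invariant nondegenerate symmetric bilinear form $\omega$ of \eqref{e16}, and with $\mathscr{G} = G^k[[t]] \times L_XG$ acting on $LG$ by left--translations; since both factors act by left--translation, they automatically preserve the left--invariant form $\omega$. First I would identify the moduli stack of the theorem with the quotient $\mathcal{W}^k/(G^k[[t]] \times L_XG)$: dividing the data on $LG$ by $L_XG$ leaves a principal $G$--bundle together with its formal trivialization at $p$, and further dividing by $G^k[[t]]$ replaces the full formal trivialization by one modulo $t^k$, which amounts to a $k$--th order framing; the meromorphic Higgs field with pole of order at most $k$ is precisely the data encoded by the fiber $\mathcal{W}^k_\alpha$ defined in Section~\ref{se-hf}.

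The crux is to identify $\mathcal{V} := \omega'((\mathcal{F}^{\mathscr{G}})^{\perp})$ with $\mathcal{W}^k$ fiberwise. Over a point $\alpha \in LG$ corresponding to $(E_G, \text{trivializations})$, the infinitesimal orbit $(\mathcal{F}^{\mathscr{G}})_\alpha \subset T_\alpha LG = \mathfrak{g}((t))$ is the sum of the two Lie subalgebras
\[
t^k \mathfrak{g}[[t]] \ + \ H^0(X,\, \ad(E_G)(\ast p)),
\]
coming from the $G^k[[t]]$ and $L_XG$ factors via the embedding \eqref{e24}. Under the residue pairing --- which expresses $\omega$ on $LG$ as the canonical pairing $\mathfrak{g}((t)) \otimes \mathfrak{g}((t)) dt \to \mathbb{C}$ --- the annihilator of a sum of subspaces is the intersection of the individual annihilators. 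By the annihilator calculation carried out in the proof of Lemma~\ref{lem4}, the annihilator of $t^k \mathfrak{g}[[t]]$ is $t^{-k} \mathfrak{g}[[t]] dt$, and by Theorem~\ref{thm:residueann} the annihilator of $H^0(X,\, \ad(E_G)(\ast p))$ is $H^0(X,\, \ad(E_G) \otimes K_X(\ast p))$. Their intersection inside $\mathfrak{g}((t)) dt$ consists of those meromorphic sections of $\ad(E_G) \otimes K_X$ which are holomorphic away from $p$ and have pole of order at most $k$ at $p$ --- precisely the fiber $\mathcal{W}^k_\alpha$ of the bundle introduced in Section~\ref{se-hf}.

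Once the identification $\mathcal{V} = \mathcal{W}^k$ is established, Lemma~\ref{lem4} guarantees the $\mathscr{G}$--invariance of $\mathcal{W}^k$, and Proposition~\ref{prop:mainprop} delivers a symplectic form on the quotient $\mathcal{W}^k/(G^k[[t]] \times L_XG)$ obtained by descent from the Liouville symplectic form on $T^*LG$. The main obstacle is the annihilator--intersection step, which combines the coordinate--free residue duality of Theorem~\ref{thm:residueann} with the explicit power--series duality pairing $t^k \mathfrak{g}[[t]]$ with $t^{-k} \mathfrak{g}[[t]] dt$; once one recognizes both as instances of the same residue pairing $\mathcal{R}$, the verification reduces to the routine observation that a section of $\ad(E_G) \otimes K_X$, holomorphic on $X \setminus \{p\}$ and whose local expansion at $p$ lies in $t^{-k} \mathfrak{g}[[t]] dt$, has pole of order at most $k$ at $p$.
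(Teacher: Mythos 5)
Your proposal is correct and follows exactly the route the paper intends: the paper omits the details of this proof, saying only that it is ``similar to the proof of Theorem~\ref{thm:main1},'' and your argument supplies precisely the missing steps --- identifying the infinitesimal $G^k[[t]]\times L_XG$--orbit with the sum $t^k\mathfrak{g}[[t]] + H^0(X,\ad(E_G)(\ast p))$, and computing its annihilator as the intersection $t^{-k}\mathfrak{g}[[t]]dt \cap H^0(X,\ad(E_G)\otimes K_X(\ast p)) = \mathcal{W}^k_\alpha$ using the computation from Lemma~\ref{lem4} together with Theorem~\ref{thm:residueann}, before invoking Proposition~\ref{prop:mainprop}. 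The only (cosmetic) caveat is an off--by--one in the framing convention --- quotienting by $G^k[[t]]$ yields a trivialization modulo $t^k$, which under Definition~\ref{def:kframing} is literally a $(k-1)$--th order framing --- but this ambiguity is already present in the paper itself and does not affect the argument.
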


The proof is similar to the proof of Theorem \ref{thm:main1} and we omit the details. 

\section{Principal bundles with framings and meromorphic connections}

\subsection{Meromorphic connections}

As before, $X$ is an irreducible smooth complex projective curve. Fix a
marked point $p \,\in\, X$. Let $\varpi\, :\, E_G\, \longrightarrow\, X$ be a
principal $G$--bundle on $X$.
The Atiyah bundle $\text{At}(E_G)$ of $E_G$ is the quotient $(TE_G)/G \,
\longrightarrow\, E_G/G \,=\, X$, which a vector bundle on $X$. Let
\begin{equation}\label{at}
0\, \longrightarrow\, \text{ad}(E_G) \, \longrightarrow\, \text{At}(E_G)
\, \xrightarrow{\,\,\, d\varpi\,\,\,}\, TX \, \longrightarrow\, 0
\end{equation}
be the Atiyah exact sequence on $X$, where $d\varpi$ is the differential of the
above projection $\varpi$ (see \cite{At}); note that $\text{ad}(E_G)\,=\, T_\varpi/G$,
where $T_\varpi\, \subset\, TE_G$ is the relative tangent bundle for the projection
$\varpi$.

We recall from \cite{At} that an algebraic connection on $E_G$ is a homomorphism
$$
{\mathcal D}\, :\, TX \, \longrightarrow\, \text{At}(E_G)
$$
such that $(d\varpi)\circ{\mathcal D}\,=\, {\rm Id}_{TX}$, where $d\varpi$ is the
homomorphism in \eqref{at}. A \textit{meromorphic connection} on $E_G$ is a homomorphism
$$
{\mathcal D}\, :\, (TX)\otimes {\mathcal O}_X(-np) \, \longrightarrow\, \text{At}(E_G),
$$
where $n$ is some nonnegative integer, such that $(d\varpi)\circ{\mathcal D}\,=\,
{\rm Id}_{(TX)\otimes {\mathcal O}_X(-np)}$; note that we have
$$
(TX)\otimes {\mathcal O}_X(-np)\, \subset\, TX
$$
because it is assumed that $n\, \geq\, 0$, so the composition $(d\varpi)\circ{\mathcal D}$ makes
sense.

If ${\mathcal D}\, :\, (TX)\otimes {\mathcal O}_X(-np) \, \longrightarrow\, \text{At}(E_G)$
is a homomorphism such that $$(d\varpi)\circ{\mathcal D}\,=\,
{\rm Id}_{(TX)\otimes {\mathcal O}_X(-np)},$$ then $\mathcal D$ will be a called a meromorphic connection
on $E_G$ with a pole of order at most $k$.

For the trivial principal $G$--bundle $E^0_G\,=\, M\times G$ on any smooth complex
variety $M$, we have $\text{At}(E^0_G)\,=\, \text{ad}(E^0_G)\oplus TM$. The algebraic
connection on $E^0_G$ defined by the natural inclusion map
$$
TM \, \hookrightarrow\, \text{ad}(E_G)\oplus TM \,=\, \text{At}(E^0_G)
$$
is called the trivial connection on $E^0_G$.

Every principal $G$--bundle $F_G$ on $X$ admits a meromorphic connection. Indeed, this
follows immediately from the fact that the restriction of $F_G$ to the complement
$X\setminus\{p\}$ is trivial; the trivial connection on $F_G\big\vert_{X\setminus\{p\}}$
is a meromorphic connection on $F_G$. The space of all meromorphic connections on
$F_G$ is an affine space for the vector space $H^0(X,\, \text{ad}(F_G)\otimes K_X
\otimes {\mathcal O}_X(\ast p))$. Recall that $H^0(X,\, \text{ad}(F_G)\otimes K_X
\otimes {\mathcal O}_X(\ast p))$ is the space of all meromorphic Higgs fields on the
principal $G$--bundle $F_G$.

Take any element $\alpha\, \in\, LG$. Recall that $\alpha$ gives a principal $G$--bundle
$E_G$ on $X$, and
\begin{itemize}
\item a trivialization of $E_G$ over the complement $X\setminus \{p\}$, and

\item a trivialization of $E_G$ over the formal disc $D_p$ around the point $p\, \in\, X$.
\end{itemize}
Consider the trivial connection on $E_G\big\vert_{X\setminus \{p\}}$ given by the
above trivialization of $E_G$ over $X\setminus \{p\}$. It evidently defines a meromorphic
connection on $E_G$. So for each $\alpha\, \in\, LG$, the corresponding
principal $G$--bundle $E_G$ on $X$ is equipped with a meromorphic connection given by $\alpha$.
This meromorphic connection on the principal $G$--bundle $E_G$ on $X$ given by $\alpha$
will be denoted by
\begin{equation}\label{l1}
{\mathcal D}_\alpha.
\end{equation}

\begin{lemma}\label{lem5}
Take any $\alpha\, \in\, LG$, and let $E_G$ be the principal $G$--bundle on $X$ corresponding
to $\alpha$. Then the space of all meromorphic connections on $E_G$ is canonically identified
with the space of all meromorphic Higgs fields on $E_G$.
\end{lemma}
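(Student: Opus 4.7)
The plan is to use the distinguished meromorphic connection $\mathcal{D}_\alpha$ from \eqref{l1} as a canonical basepoint that upgrades the usual affine-space structure on meromorphic connections into a canonical linear isomorphism with the space of meromorphic Higgs fields. The key structural fact, already recalled just above the lemma statement, is that for any principal $G$-bundle $F_G$ on $X$ the set of meromorphic connections on $F_G$ is a torsor under the vector space $H^0(X,\, \mathrm{ad}(F_G)\otimes K_X\otimes \mathcal{O}_X(\ast p))$; this is because the difference of two splittings of the Atiyah sequence \eqref{at}, twisted by $\mathcal{O}_X(\ast p)$, is an $\mathcal{O}_X$-linear map $(TX)\otimes \mathcal{O}_X(\ast p)\lra \mathrm{ad}(F_G)\otimes \mathcal{O}_X(\ast p)$, i.e.\ a meromorphic Higgs field with pole only at $p$.

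Next I would apply this to $E_G$ with the canonical basepoint $\mathcal{D}_\alpha$. Explicitly, I would define the map
$$
\Phi_\alpha\,:\,\{\text{meromorphic connections on } E_G\}\ \lra\ H^0(X,\, \mathrm{ad}(E_G)\otimes K_X\otimes \mathcal{O}_X(\ast p))
$$
by sending $\mathcal{D}\,\longmapsto\, \mathcal{D}-\mathcal{D}_\alpha$. The affine-torsor statement recalled above immediately implies that $\Phi_\alpha$ is a bijection, and that the inverse is $\theta\,\longmapsto\,\mathcal{D}_\alpha+\theta$. Since $\mathcal{D}_\alpha$ is intrinsically determined by $\alpha$ (through the trivialization of $E_G$ over $X\setminus \{p\}$), the bijection $\Phi_\alpha$ is canonical once $\alpha$ has been fixed.

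The only substantive point to check, and the step that I expect to require the most care, is that $\mathcal{D}_\alpha$ really is a meromorphic connection on $E_G$ in the global sense, so that the subtraction $\mathcal{D}-\mathcal{D}_\alpha$ lives in the correct global Higgs space rather than merely in a punctured-neighborhood or formal-disc variant. For this I would argue as follows: the trivialization of $E_G$ over $X\setminus\{p\}$ gives the trivial connection on this open set, which is honestly algebraic on $X\setminus\{p\}$; pulling back to a neighborhood of $p$ via the other trivialization (the one on $D_p$), the transition is given by $\alpha\in LG\,=\,G((t))$, whose finite-order pole at $p$ forces the resulting Atiyah-bundle splitting to extend across $p$ with at most a pole of finite order. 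Hence $\mathcal{D}_\alpha$ is a genuine meromorphic connection on $E_G$ with pole only at $p$, and the torsor argument gives the desired canonical identification.
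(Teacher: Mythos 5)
Your proposal is correct and follows essentially the same route as the paper: both use $\mathcal{D}_\alpha$ as a basepoint and the affine/torsor structure of the space of meromorphic connections over $H^0(X,\,\mathrm{ad}(E_G)\otimes K_X\otimes\mathcal{O}_X(\ast p))$ to get the bijection $\mathcal{D}\longmapsto\mathcal{D}-\mathcal{D}_\alpha$. Your additional verification that $\mathcal{D}_\alpha$ extends across $p$ with a finite-order pole is a point the paper treats as evident (having already noted that the trivial connection on $E_G\big\vert_{X\setminus\{p\}}$ is a meromorphic connection), so it is a harmless elaboration rather than a divergence.
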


\begin{proof}
Take any meromorphic Higgs field
$$
\varphi\, \in\, H^0(X,\, \text{ad}(E_G)\otimes K_X(\ast p)) \,=\,
\lim_{i\rightarrow \infty} H^0(X,\, \text{ad}(E_G)\otimes K_X{\mathcal O}_X(i p))
$$
on $E_G$ (see \eqref{vp}). Consider ${\mathcal D}_\alpha +\varphi$, where ${\mathcal D}_\alpha$
is the meromorphic connection on $E_G$ given by $\alpha$ (see \eqref{l1}). It is evident
that ${\mathcal D}_\alpha +\varphi$ is a meromorphic connection on the principal $G$--bundle
$E_G$. Conversely, if ${\mathcal D}$ is a meromorphic connection on the principal $G$--bundle
$E_G$, then ${\mathcal D}-{\mathcal D}_\alpha$ is a meromorphic Higgs field on $E_G$.
\end{proof}

{}From Lemma \ref{lem5} it follows immediately that for any element
$\alpha\, \in\, LG$, the fiber ${\mathcal W}_{\alpha}$ of ${\mathcal W}$ (see \eqref{e17})
over the point $\alpha$ is identified with the space of all meromorphic connections on
the principal $G$--bundle $E_G$ given by $\alpha$.

Let ${\mathcal U}$ denote the space of all pairs of the form $(\alpha,\, {\mathcal D})$, where
$\alpha\, \in\, LG$ and $\mathcal D$ is a meromorphic connection on the principal $G$--bundle
$E_G$ on $X$ given by $\alpha$. We have the natural projection
\begin{equation}\label{e31}
\Phi\,:\, {\mathcal U}\, \longrightarrow\, LG
\end{equation}
that sends any $(\alpha,\, {\mathcal D})\, \in\, {\mathcal U}$ to $\alpha$. So the
fiber ${\mathcal U}_\alpha\,=\, \Phi^{-1}(\alpha)$, where $\alpha\, \in\, LG$, is the
space of meromorphic connection on the principal $G$--bundle on $X$ given by $\alpha$.

Lemma \ref{lem5} has the following corollary:

\begin{corollary}\label{cor1}
The fiber bundle ${\mathcal U}\,\longrightarrow\, LG$ in \eqref{e31} is canonically
identified with the fiber bundle $\mathcal W$ in \eqref{e17}. In particular,
$\mathcal U$ is sub--vector bundle of the cotangent bundle $T^*LG$ (because $\mathcal W$ is
so).
\end{corollary}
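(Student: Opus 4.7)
The plan is to deduce the corollary by promoting the fiberwise identification of Lemma \ref{lem5} to an identification of fiber bundles over $LG$. First I would recall that by construction the fiber $\mathcal{U}_\alpha \,=\, \Phi^{-1}(\alpha)$ is precisely the space of meromorphic connections on the principal $G$--bundle $E_G$ associated to $\alpha$, while the fiber $\mathcal{W}_\alpha$ is by definition the space of meromorphic Higgs fields on $E_G$, i.e., the space $H^0(X,\, \ad(E_G)\otimes K_X(\ast p))$. Lemma \ref{lem5} therefore gives a canonical bijection
$$\Psi_\alpha\,:\, \mathcal{W}_\alpha \,\longrightarrow\, \mathcal{U}_\alpha,\qquad \varphi\,\longmapsto\, \mathcal{D}_\alpha + \varphi,$$
with inverse $\mathcal{D}\,\longmapsto\, \mathcal{D}-\mathcal{D}_\alpha$, where $\mathcal{D}_\alpha$ is the reference meromorphic connection in \eqref{l1} obtained from the trivialization of $E_G\big\vert_{X\setminus\{p\}}$ supplied by $\alpha$.

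The next step is to check that these fiberwise bijections assemble into a morphism of fiber bundles over $LG$. I would observe that the assignment $\alpha \,\longmapsto\, \mathcal{D}_\alpha$ is natural: the trivialization of $E_G$ on $X\setminus\{p\}$ given by $\alpha$ varies algebraically with $\alpha$, so the associated trivial connection on $E_G\big\vert_{X\setminus\{p\}}$ (regarded as a meromorphic connection on $E_G$) depends algebraically on $\alpha$. Thus $\alpha\,\longmapsto\, \mathcal{D}_\alpha$ defines a section of $\Phi\,:\,\mathcal{U}\,\longrightarrow\, LG$, and the family of maps $\{\Psi_\alpha\}_{\alpha\in LG}$ glues to a morphism
$$\Psi\,:\, \mathcal{W}\,\longrightarrow\, \mathcal{U},\qquad (\alpha,\,\varphi)\,\longmapsto\, (\alpha,\,\mathcal{D}_\alpha+\varphi)$$
of fiber bundles over $LG$. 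It is bijective on each fiber by Lemma \ref{lem5}, and its inverse $(\alpha,\,\mathcal{D})\,\longmapsto\, (\alpha,\,\mathcal{D}-\mathcal{D}_\alpha)$ also varies algebraically with $\alpha$, so $\Psi$ is an isomorphism of fiber bundles.

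Finally, for the ``in particular'' part, I would note that since $\mathcal{W} \,\subset\, T^*LG$ is a sub--vector bundle (by \eqref{e17} and its identification in Theorem \ref{thm:main1}), the isomorphism $\Psi$ transports the linear structure to $\mathcal{U}$ and realizes $\mathcal{U}$ as a sub--vector bundle of $T^*LG$ as well. I do not anticipate a genuine obstacle here: the only point that requires care is making sure that the reference connection $\mathcal{D}_\alpha$, which depends on the choice of $\alpha$ (not merely on the underlying bundle $E_G$), varies in families --- but this follows because $\alpha$ directly supplies the trivialization from which $\mathcal{D}_\alpha$ is built, so there are no ambiguity issues to resolve and the argument reduces to repackaging Lemma \ref{lem5}.
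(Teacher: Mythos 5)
Your proposal is correct and matches the paper's (implicit) argument: the corollary is deduced directly from Lemma \ref{lem5} by observing that the fiberwise affine identification $\varphi\mapsto\mathcal{D}_\alpha+\varphi$ varies with $\alpha$ because the reference connection $\mathcal{D}_\alpha$ is built from the trivialization over $X\setminus\{p\}$ supplied by $\alpha$ itself. One tiny quibble: the fact that $\mathcal{W}$ is a sub--vector bundle of $T^*LG$ is already part of its construction in \eqref{e17} and does not need Theorem \ref{thm:main1}, which would be slightly circular to invoke here.
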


We recall that the action of $L_XG$ on $T^*LG$, induced by the action of $L_XG$ on
$LG$, preserves the subbundle $\mathcal{W}$ (see Lemma \ref{lem2a}). Using the identification
of $\mathcal U$ with $\mathcal W$ given by Corollary \ref{cor1}, the action of $L_XG$
on $\mathcal W$ produces an action of $L_XG$ on $\mathcal U$. Clearly, the
map $\Phi$ in \eqref{e31} is equivariant for the actions of $L_XG$ on $\mathcal U$
and $LG$. However, $\mathcal U$ has a \textit{different} action of $L_XG$ which also
satisfies the condition that the map $\Phi$ is equivariant. This action of $L_XG$ on
$\mathcal U$ is actually constructed using a different action of $L_XG$ on $T^*LG$.
We will now describe the new action of $L_XG$ on $T^*LG$.

Take any $(\alpha,\, \theta)\in LG\times (K_{D_p}\otimes
\mathfrak{g}((\xi)))$ and $g\,\in\, L_XG$. Then define the action
\begin{equation}\label{e32}
(\alpha,\, \theta)\cdot g \ =\ (\alpha g, \, g^{-1}{\mathcal D}_\alpha (g) +
\operatorname{Ad}(g)(\theta)),
\end{equation}
where ${\mathcal D}_\alpha$ is the meromorphic connection in \eqref{l1}. It is straight--forward to
check that \eqref{e32} defines an action of $L_XG$ on $T^*LG$. The natural projection
$T^*LG\, \longrightarrow\, LG$ remains $L_XG$--equivariant for this new action of
$L_XG$.

\begin{lemma}\label{lem6}
The action of $L_XG$ on $T^*LG$ in \eqref{e32} preserves the subbundle ${\mathcal W}
\, \subset\, T^*LG$ in \eqref{e17}. 
\end{lemma}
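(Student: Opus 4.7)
The plan is to interpret the action \eqref{e32} as the natural change--of--trivialization action on meromorphic connections, and then to use the identification $\mathcal{W} \cong \mathcal{U}$ from Corollary \ref{cor1} to deduce preservation of $\mathcal{W}$. Under the bijection $\theta \longleftrightarrow \mathcal{D}_\alpha + \theta$ of Corollary \ref{cor1}, a point $(\alpha,\theta) \in \mathcal{W}$ corresponds to the principal $G$--bundle $E_G$ determined by $\alpha$ together with the meromorphic connection $\mathcal{D} := \mathcal{D}_\alpha + \theta$ on $E_G$. The element $g \in L_X G$, being an algebraic morphism $X \setminus \{p\} \to G$, induces a change of trivialization of $E_G$ on $X \setminus \{p\}$ (the new trivialization is precisely the one recorded by $\alpha g$), while the underlying bundle $E_G$, its $D_p$--trivialization, and the connection $\mathcal{D}$ are all left unchanged.

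Next, I will verify that, in the $\alpha g$--trivialization, this same connection $\mathcal{D}$ is represented by $\mathcal{D}_{\alpha g} + \theta'$ with $\theta' = g^{-1}\mathcal{D}_\alpha(g) + \operatorname{Ad}(g)(\theta)$. This is the standard gauge transformation rule applied to $\mathcal{D}_\alpha + \theta$: in the $\alpha$--trivialization the connection $\mathcal{D}_\alpha$ is the trivial connection, so $\mathcal{D}_\alpha(g) = dg$ there, and the formula $\theta \mapsto g^{-1} dg + \operatorname{Ad}(g)(\theta)$ is exactly the transformation law of a connection 1--form under the gauge change by $g$. This matches the right--hand side of \eqref{e32}.

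Having identified the action \eqref{e32} with the change of trivialization on $X \setminus \{p\}$, preservation of $\mathcal{W}$ is then immediate: by construction $\theta' = \mathcal{D} - \mathcal{D}_{\alpha g}$ is the difference of two meromorphic connections on the \emph{same} bundle $E_G$ (both with pole only at $p$), hence is itself a meromorphic Higgs field on $E_G$, so by Lemma \ref{lem5} and Corollary \ref{cor1} it lies in $\mathcal{W}_{\alpha g}$. The main obstacle I expect is purely bookkeeping, namely keeping careful track of conventions (the left versus right trivialization of $T^*LG$, the sign in the gauge formula, and the interpretation of $\mathcal{D}_\alpha(g)$ as $dg$ in the $\alpha$--trivialization); once these are fixed, the geometric content---that \eqref{e32} merely rewrites the same underlying connection $\mathcal{D}$ in the new trivialization---makes preservation of $\mathcal{W}$ essentially automatic.
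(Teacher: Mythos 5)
Your proof is correct, but it runs along a slightly different track than the paper's. The paper's argument is a term-by-term check: since $g\in L_XG$ is regular on $X\setminus\{p\}$ and ${\mathcal D}_\alpha$ is a regular connection there, the term $g^{-1}{\mathcal D}_\alpha(g)$ is defined on all of $X\setminus\{p\}$, and $\operatorname{Ad}(g)(\theta)$ is likewise regular there because both $g$ and $\theta$ are; hence the sum lies in ${\mathcal W}_{\alpha g}$. You instead first identify the whole right-hand side of \eqref{e32} as the connection form of the \emph{fixed} connection ${\mathcal D}={\mathcal D}_\alpha+\theta$ rewritten in the $\alpha g$--trivialization, and then conclude via Lemma \ref{lem5}/Corollary \ref{cor1} that $\theta'={\mathcal D}-{\mathcal D}_{\alpha g}$ is a difference of two meromorphic connections on the same bundle, hence a meromorphic Higgs field. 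Your route is more conceptual and has the virtue of explaining \emph{why} formula \eqref{e32} is the right one --- it is exactly the interpretation that Corollary \ref{cor2} and the identification ${\mathcal M}_{Conn}(G)={\mathcal U}/L_XG$ presuppose --- but it leans on the unverified assertion that \eqref{e32} coincides with the gauge-transformation law (you flag the $\operatorname{Ad}(g)$ versus $\operatorname{Ad}(g^{-1})$ and sign conventions yourself); the paper's direct check sidesteps that entirely, since each term is manifestly regular on $X\setminus\{p\}$ regardless of which convention is in force. Both arguments ultimately rest on the same fact, namely that $g$ is an algebraic map defined on all of $X\setminus\{p\}$.
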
 

\begin{proof}
Let $E_G$ denote the principal $G$--bundle on $X$ given by $\alpha$. In \eqref{e32}, assume
that $\theta$ extends to a section of ${\rm ad}(E_G)\otimes K_X$ over the entire complement
$X\setminus\{p\}$. Since $g\, \in\, L_XG$, this implies that $g^{-1}{\mathcal D}_\alpha (g)$
is defined on entire $X\setminus\{p\}$; recall that ${\mathcal D}_\alpha$ in \eqref{l1}
is a regular connection on $E_G\big\vert_{X\setminus\{p\}}\, \longrightarrow\,
X\setminus\{p\}$. Also, $\operatorname{Ad}(g)(\theta)$
in \eqref{e32} is evidently defined over entire $X\setminus\{p\}$, because both $g$
and $\theta$ are defined over $X\setminus\{p\}$. From these it follows immediately that
the action of $L_XG$ on $T^*LG$ in \eqref{e32} preserves the subbundle $\mathcal W$.
\end{proof}

{}From Lemma \ref{lem6} we conclude that the action of $L_XG$ on $T^*LG$ in \eqref{e32}
induces an action on the subbundle $\mathcal W$ in
in \eqref{e17}. Therefore, using Corollary \ref{cor1}, we have the following:

\begin{corollary}\label{cor2}
Consider the action of $L_XG$ on $\mathcal W$ induced by the action of $L_XG$
on $T^*LG$ in \eqref{e32}. Using the identification of $\mathcal W$ with
$\mathcal U$ in Corollary \ref{cor1}, this action of $L_XG$ on $T^*LG$
produces an action of $L_XG$ on $\mathcal U$.
\end{corollary}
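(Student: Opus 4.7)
The statement is essentially a transport-of-structure result, so my plan is to combine Lemma \ref{lem6} with the identification of Corollary \ref{cor1} directly, and then to unwind the formula in \eqref{e32} into a canonical expression on $\mathcal U$.

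Here is how I would organize the argument. First, Lemma \ref{lem6} shows that the action \eqref{e32} of $L_XG$ on $T^*LG$ restricts to an action on the subbundle $\mathcal W \subset T^*LG$; in particular, $\mathcal W$ becomes an $L_XG$-space for this new action. Second, Corollary \ref{cor1} provides a canonical bijective map
\[
\Psi\,:\,\mathcal W\,\longrightarrow\,\mathcal U,\qquad (\alpha,\theta)\,\longmapsto\,(\alpha,\,\mathcal D_\alpha+\theta),
\]
where $\mathcal D_\alpha$ is the meromorphic connection of \eqref{l1}; this is a bundle isomorphism over $LG$. Transporting the $L_XG$-action along $\Psi$ defines an action of $L_XG$ on $\mathcal U$ by the formula
\[
(\alpha,\mathcal D)\cdot g \ :=\ \Psi\bigl(\Psi^{-1}(\alpha,\mathcal D)\cdot g\bigr)
\ =\ \bigl(\alpha g,\ \mathcal D_{\alpha g}+g^{-1}\mathcal D_\alpha(g)+\operatorname{Ad}(g)(\mathcal D-\mathcal D_\alpha)\bigr).
\]
Because $\Psi$ is a bijection and \eqref{e32} is already an action on $\mathcal W$, the resulting map $(\alpha,\mathcal D,g)\mapsto (\alpha,\mathcal D)\cdot g$ automatically satisfies the identity and associativity axioms: $\Psi$ intertwines the two actions by construction.

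The only non-trivial check worth mentioning is that the formula on $\mathcal U$ does not depend on the auxiliary choice of reference connection: although $\mathcal D_\alpha$ and $\mathcal D_{\alpha g}$ enter into the right-hand side, one should see that the assignment $(\alpha,\mathcal D)\mapsto (\alpha g,\mathcal D')$ is in fact the canonical gauge-change of $(\alpha,\mathcal D)$ by $g \in L_XG$ acting on $X\setminus\{p\}$, where $\mathcal D'$ is just $\mathcal D$ re-expressed in the trivialization of $E_G\big\vert_{X\setminus\{p\}}$ twisted by $g$. This follows from the standard transformation rule for connections under a gauge change, combined with the observation that $\mathcal D_\alpha$ (respectively $\mathcal D_{\alpha g}$) is by definition the trivial connection in the trivialization determined by $\alpha$ (respectively $\alpha g$). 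The step I expect to require the most care is this last geometric identification, because it involves keeping track of the sign/side conventions implicit in \eqref{e32}; once that is pinned down, equivariance of $\Phi\colon\mathcal U\to LG$ in \eqref{e31} is clear, since $\Phi\circ\Psi$ coincides with the base projection $\mathcal W\to LG$ and the latter is equivariant for the action on $\mathcal W$ produced by Lemma \ref{lem6}.
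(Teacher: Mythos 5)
Your proposal is correct and follows exactly the route the paper takes: the corollary is stated as an immediate consequence of Lemma \ref{lem6} (the action \eqref{e32} preserves $\mathcal W$) combined with the identification $\mathcal W\cong\mathcal U$ of Corollary \ref{cor1}, which is precisely your transport-of-structure argument via $\Psi$. The explicit formula for the transported action and the gauge-theoretic interpretation you add are consistent with the paper but go beyond what it records.
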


\begin{lemma}\label{lem7}
The action of $L_XG$ on $T^*LG$ in \eqref{e32} preserves the Liouville
symplectic form on $T^*LG$. 
\end{lemma}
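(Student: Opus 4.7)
The plan is to decompose the action $\sigma_g := ({-})\cdot g$ on $T^*LG$ defined by \eqref{e32} as a composition $\sigma_g = \tau_g \circ R_g^{\vee}$, where $R_g^{\vee}:T^*LG\to T^*LG$ is the canonical cotangent lift of the right--translation $R_g:\alpha\mapsto\alpha g$ on $LG$, and $\tau_g$ is the fiberwise translation $(\beta,\eta)\mapsto(\beta,\eta+\Psi_g(\beta))$ on $T^*LG$ for a certain map $\Psi_g: LG \to \mathfrak{g}((t))\,dt$ determined by $g$. In the trivialization of Section \ref{sec:Liouvilleforgroups} (after the Cartan--Killing identification $\mathfrak{g}((t))^* \cong \mathfrak{g}((t))\,dt$ and in the paper's conventions) the canonical cotangent lift takes the form $R_g^{\vee}(\alpha, \theta) = (\alpha g, \operatorname{Ad}(g)(\theta))$; comparison with \eqref{e32} then identifies $\Psi_g$ via $\Psi_g(\alpha g) = g^{-1}\mathcal{D}_\alpha(g)$.

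Canonical cotangent lifts preserve the Liouville $1$--form on $T^*LG$ by construction, so in particular $R_g^{\vee}$ preserves the Liouville symplectic form $d\omega$. Now a fiberwise translation $\tau_\eta$ of $T^*M$ by a $1$--form $\eta$ on the base satisfies $\tau_\eta^*\omega_{T^*M} = \omega_{T^*M} + \pi^*\eta$, and hence $\tau_\eta^* d\omega_{T^*M} = d\omega_{T^*M} + \pi^*d\eta$, where $\pi$ is the bundle projection. Consequently, $\tau_g$ preserves $d\omega$ if and only if the $1$--form $\widetilde\Psi_g$ on $LG$ associated to $\Psi_g$ (via the trivialization and Cartan--Killing identification) is a closed form.

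It therefore remains to verify that $\widetilde\Psi_g$ is closed. Writing the flat connection in the formal disc frame at $p$ as $\mathcal{D}_\alpha = d + A(\alpha)$ with $A(\alpha) = -(d\alpha)\alpha^{-1}$ (the negative of the right--invariant Maurer--Cartan form on $LG$), one expands $g^{-1}\mathcal{D}_\alpha(g) = g^{-1}dg + \operatorname{Ad}(g^{-1})A(\alpha)$. The $1$--form $\widetilde\Psi_g$ thus splits into a left--invariant piece coming from the constant--in--$\alpha$ term $g^{-1}dg$, plus an $\alpha$--dependent piece coming from the $\operatorname{Ad}(g^{-1})$--twist of the Maurer--Cartan term $A(\alpha)$. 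The exterior derivatives of these two pieces are computed using the Maurer--Cartan structure equation on $LG$ and the first--order variation formula $\partial_v A(\alpha) = -\operatorname{Ad}(\alpha)(dv)$, respectively. The resulting contributions cancel by combining the cyclic $\operatorname{ad}$--invariance of the Cartan--Killing pairing with the elementary identity $\operatorname{Res}_{t=0}(\partial_t f\cdot dt) = 0$ for any Laurent series $f$.

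I expect the main obstacle to be the closedness computation in the preceding paragraph: sign, left-- vs.\ right--trivialization, and Cartan--Killing conventions must be tracked carefully, and the cancellation between the left--invariant and $\alpha$--dependent contributions is delicate, reflecting the gauge--theoretic origin of the action \eqref{e32}. Once $d\widetilde\Psi_g = 0$ is established, $\sigma_g^*d\omega = d\omega$ follows formally from the decomposition $\sigma_g = \tau_g \circ R_g^{\vee}$ together with the two general facts above about cotangent lifts and fiberwise translations.
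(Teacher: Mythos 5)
Your reduction is the right skeleton, and it is in fact a more articulate version of what the paper itself asserts: the paper's proof consists of the remark that the Liouville one--form is \emph{not} preserved by \eqref{e32} while its exterior derivative is, which is exactly your observation that $\sigma_g^*\theta-\theta$ is the pullback $\pi^*(R_g^*\widetilde\Psi_g)$ of a one--form from the base, so that everything hinges on $d\widetilde\Psi_g=0$. The two general facts you invoke (cotangent lifts preserve the tautological one--form; a fiberwise translation by a base one--form $\eta$ shifts it by $\pi^*\eta$) are correct, granting that the linear part $\theta\mapsto\operatorname{Ad}(g)(\theta)$ of \eqref{e32} really is the cotangent lift of $R_g$ in the paper's trivialization --- which it is, up to the usual $\operatorname{Ad}(g)$ versus $\operatorname{Ad}(g^{-1})$ bookkeeping.

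The genuine gap is that the closedness of $\widetilde\Psi_g$ is the \emph{entire} content of the lemma, and you do not prove it; you defer it as ``the main obstacle'' and offer a cancellation mechanism that, as stated, does not suffice. Concretely: (i) your expansion $g^{-1}\mathcal D_\alpha(g)=g^{-1}dg+\operatorname{Ad}(g^{-1})A(\alpha)$ is not obviously the right formula --- covariantly differentiating $g$ as a gauge transformation produces an additional $-A(\alpha)$ term (equivalently, $g^{-1}\mathcal D_\alpha(g)$ collapses to $g^{-1}dg$ when expressed in the frame over $X\setminus\{p\}$), and the outcome of the closedness computation is sensitive to exactly which expression is correct and in which frame the fiber $\mathfrak g\otimes K_{D_p}((t))$ is being read. (ii) The identities you cite do not by themselves kill the individual contributions. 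The exterior derivative of the left--invariant piece attached to $g^{-1}dg$ is $-\operatorname{Res}\bigl(([u,v],\,g^{-1}\partial_tg)\,dt\bigr)$, which is nonzero for semisimple $\mathfrak g$ since $[\mathfrak g((t)),\mathfrak g((t))]=\mathfrak g((t))$; and the piece attached to the Maurer--Cartan term $A(\beta)$ contributes, besides bracket terms, the Kac--Moody cocycle $\operatorname{Res}\bigl((u,\partial_tv)\,dt\bigr)$, which the identity $\operatorname{Res}(\partial_tf\,dt)=0$ only shows to be \emph{antisymmetric}, not zero. So $d\widetilde\Psi_g=0$ can only come from a cancellation \emph{across} the three pieces, and that cancellation has to be exhibited with the conventions pinned down; it is precisely here that the paper's (admittedly cryptic) appeal to the automatic integrability of connections on a curve is doing the work, and your write--up neither carries out the computation nor substitutes a conceptual argument for it. Until that is done, the proof is incomplete at its only nontrivial point.
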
 

\begin{proof}
This is a straight--forward computation. It should be clarified that the Liouville
$1$--form (see Section \ref{sec:Liouvilleforgroups}) is not preserved by the action of $L_XG$, but its exterior derivative, namely
the Liouville symplectic form on $T^*LG$, is preserved by the action of $L_XG$. This follows
using the fact that any connection on a principal bundle on a smooth complex curve is
automatically integrable. (Any connection on a principal bundle on $X$
is integrable because $\bigwedge^2 \Omega^1_X\,=\, \bigwedge^2 K_X \,=\,0$.)
\end{proof}

Consider the action of $L_XG$ on $\mathcal U$ obtained in Corollary \ref{cor2}.
The corresponding quotient
\begin{equation}\label{30}
{\mathcal M}_{Conn}(G)\ :=\ \mathcal{U}/L_XG
\end{equation}
is the moduli stack of principal $G$--bundles $F_G$ on $X$ equipped with
\begin{itemize}
\item a trivialization of $F_G\big\vert_{D_p}\, \longrightarrow\, {D_p}$
on the formal disc $D_p$ around the point $p\, \in\, X$, and

\item a meromorphic connection on $F_G$.
\end{itemize}
The following theorem is an analog of Theorem \ref{thm:main1} for the moduli stack
$\mathcal{M}$ defined \eqref{30}.

\begin{theorem}\label{thm:main2}
The moduli stack ${\mathcal M}_{Conn}(G)$ in \eqref{30}, which 
parametrizes the principal $G$--bundles on $X$ with a meromorphic connection and
a trivialization over $D_p$, has a canonical symplectic structure coming from
Liouville symplectic structure on the total space of the cotangent bundle $T^*LG$. 
\end{theorem}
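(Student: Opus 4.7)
The plan is to mirror the strategy of Theorem \ref{thm:main1} but using the twisted $L_XG$-action on $T^*LG$ from \eqref{e32} together with the sub-bundle $\mathcal{U}$. The new difficulty is that Lemma \ref{lem7} gives invariance only of the symplectic 2-form $d\theta_M$, not of the Liouville 1-form $\theta_M$; Proposition \ref{prop:mainprop}, which first descends $\theta_M$ and then takes $d$, therefore does not apply verbatim. I would instead descend $d\theta_M$ directly, by verifying pointwise the linear-algebraic criterion of Proposition \ref{prop1} -- namely, that the kernel of the restricted 2-form equals the infinitesimal orbit space of the twisted action.

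First I would use Corollary \ref{cor1} to identify $\mathcal{U}$ with $\mathcal{W}$ as a sub-bundle of $T^*LG$. By Lemma \ref{lem6} the twisted action preserves $\mathcal{U}$, and by Lemma \ref{lem7} it preserves $d\theta_M$, so $d\theta_M|_\mathcal{U}$ is an $L_XG$-invariant closed 2-form. For it to descend to a symplectic form on $\mathcal{U}/L_XG$ it suffices to show that, at every $(\alpha,\theta)\in \mathcal{U}$, the tangent to the twisted $L_XG$-orbit through $(\alpha,\theta)$ coincides with the kernel of $d\theta_M$ restricted to $T_{(\alpha,\theta)}\mathcal{U}$.

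The twisted orbit tangent at $(\alpha,\theta)$ for $v\in \operatorname{Lie}(L_XG)$ works out to $\xi_v=(v,\,d^{\mathcal D} v)$, where $\mathcal{D}=\mathcal{D}_\alpha+\theta$ is the associated meromorphic connection and $d^{\mathcal D}$ is its covariant derivative -- this is the familiar infinitesimal gauge transformation of a connection. To verify $\xi_v$ lies in the kernel of $d\theta_M|_\mathcal{U}$, I would pair $\xi_v$ against an arbitrary $(u',\phi')\in T_{(\alpha,\theta)}\mathcal{U}$ using the explicit residue-pairing formula for the Liouville 2-form from Section \ref{sec:Liouvilleforgroups}. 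The $[v,\theta]$-contribution is rearranged using ad-invariance of the Cartan--Killing form, while the $dv$-contribution is absorbed by ``integration by parts'' for Laurent expansions -- the total residue of an exact 1-form on $X$ vanishes, so $\mathcal{R}(u',dv)=-\mathcal{R}(du',v)$. After these manipulations the remaining residues pair elements of $\operatorname{Lie}(L_XG)$ against elements of $\mathcal{W}_\alpha$, and Theorem \ref{thm:residueann} forces them to vanish. The reverse inclusion -- that the kernel is no larger than the orbit tangent -- goes through the same non-degeneracy step that drives the proof of Theorem \ref{thm:main1}: $\mathcal{R}$ is non-degenerate on the pair of complementary subspaces identified by Theorem \ref{thm:residueann}.

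The main obstacle is tracking $T_{(\alpha,\theta)}\mathcal{U}$ correctly, since it is \emph{not} the direct sum $T_\alpha LG\oplus \mathcal{W}_\alpha$: the subspace $\mathcal{W}_\alpha\subset \mathfrak{g}\otimes \mathbb{C}((t))\,dt$ genuinely moves with $\alpha$. An infinitesimal displacement of $\alpha$ in a direction $u'\in T_\alpha LG$ induces a compensating shift of the fiber -- morally a $[u',\theta]$-twist coming from the changing $D_p$-trivialization -- which has to be followed faithfully through the pairing computation above. This trivialization twist is precisely what the Maurer--Cartan correction $g^{-1}\mathcal{D}_\alpha(g)$ in \eqref{e32} is designed to cancel: the extra $d^{\mathcal{D}_\alpha}v$-term in $\xi_v$ (beyond the standard cotangent lift) is absorbed by the fiber shift in $T\mathcal{U}$, so Theorem \ref{thm:residueann} can close the kernel identity. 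Once this matching is secured, Proposition \ref{prop1} delivers the desired non-degenerate symplectic form on $\mathcal{U}/L_XG=\mathcal{M}_{Conn}(G)$.
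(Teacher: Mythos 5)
Your proposal is correct and rests on the same two pillars as the paper's argument --- the pointwise linear algebra of Proposition \ref{prop1} and the annihilator statement of Theorem \ref{thm:residueann}, fed through the identification $\mathcal{U}\cong\mathcal{W}$ of Corollary \ref{cor1} --- but it is organized differently, and the difference is substantive. The paper's proof is a three--line reduction: it sets $M=LG$, $\mathscr{G}=L_XG$, notes $\mathcal{V}=\mathcal{W}$ by Theorem \ref{thm:residueann}, identifies $\mathcal{U}$ with $\mathcal{V}$, and then simply invokes Proposition \ref{prop:mainprop}. But Proposition \ref{prop:mainprop} is built on Lemma \ref{prop:mainprop1}, which first descends the Liouville \emph{one}--form $\theta'_M$ to the quotient and only then differentiates; that descent in turn uses Lemma \ref{lem2}, i.e.\ invariance of $\theta_M$ under the group action. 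For the twisted action \eqref{e32} defining ${\mathcal M}_{Conn}(G)$, the paper itself concedes in Lemma \ref{lem7} that $\theta_M$ is \emph{not} preserved --- only $d\theta_M$ is --- so the literal hypotheses of Proposition \ref{prop:mainprop} are not met, and the paper's one--line invocation quietly skips this. You identified exactly this mismatch and routed around it: you descend the invariant closed two--form $d\theta_M\big|_{\mathcal U}$ directly, reducing the descent and the nondegeneracy to the single statement that the kernel of the restricted two--form at each point equals the tangent space to the twisted orbit. Your computation of the orbit tangent as the infinitesimal gauge transformation $\xi_v=(v,\,d^{\mathcal D}v)$, the disposal of the cross terms via ad--invariance of the Cartan--Killing form and vanishing of total residues of global meromorphic one--forms, and the reverse inclusion via the nondegeneracy built into Theorem \ref{thm:residueann}, are all sound; your flagging of the fact that $T_{(\alpha,\theta)}\mathcal{U}$ is not a product and that the Maurer--Cartan correction in \eqref{e32} is what compensates the motion of the fiber is precisely the bookkeeping one must do to make the kernel identity close. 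In short, your route proves the same theorem with the same ingredients but supplies a justification for the descent step that the paper's proof, as written, elides.
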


\begin{proof}
Just like Theorem \ref{thm:main1}, this theorem will also be proved using
Proposition \ref{prop:mainprop}. To get into the set--up of Proposition \ref{prop:mainprop},
set $\mathscr{G}$ in Proposition \ref{prop:mainprop} to be $L_XG$. Also, set
$M\,=\, LG$ in Proposition \ref{prop:mainprop}. Set $\omega$ in
Proposition \ref{prop:mainprop} to be the nondegenerate symmetric
bilinear form $\omega$ on $LG$ constructed in \eqref{e16}.

Define $\mathcal{F}^{\mathscr{G}}$ as in \eqref{e20}, and then define $\mathcal{V}$ as 
\eqref{e7a}. Then $\mathcal W$ coincides with $\mathcal{V}$, which
follow using Theorem \ref{thm:residueann}. Therefore,
$\mathcal U$ is identified with $\mathcal V$, because $\mathcal U$ is identified with
$\mathcal W$. Now the theorem follows from Proposition \ref{prop:mainprop}.
\end{proof}

\subsection{Singular connections with pole of bounded order}

As in Section \ref{se-hf}, let $k$ be a positive integer and consider principal $G$ bundles $E_G$
on $X$ which is equipped with
\begin{itemize}
\item a framing of order $k$ (see Definition \ref{def:kframing}) at the point $p \,\in\, X$, and

\item a meromorphic connection with a pole of order at most $k$ at the point $p$.
\end{itemize}
Recall the fiber bundle $\Phi\,:\, {\mathcal U}\, \longrightarrow\, LG$ constructed in \eqref{e31}.
Let $$\mathcal{U}^{k}\ \subset \ {\mathcal U}$$
be the subbundle whose fiber over any point $\alpha\, \in\,LG$ consists of all principal $G$--bundle $E_G$ on
$X$ equipped with a regular connection on $E_G\big\vert_{X\setminus\{p\}}$
whose pole at $p$ has order at most $k$.

As in Section \ref{sec:arbitraryframing}, the group $G^{k}[[t]]\times L_XG$ acts on the loop group 
$LG$ and hence also on $T^*LG$. Thus applying Proposition \ref{prop:mainprop}, we get the following: 

\begin{theorem}\label{thm:main3}
Let ${\mathcal M}_{Conn}^k(G)$ denote the moduli stack parametrizing the principal $G$--bundles on $X$ equipped
with a $k$--th order framing at the point $p$ and a meromorphic connection with pole of order at most
$k$ at $p$. Then ${\mathcal M}_{Conn}^k(G)$ inherits a canonical symplectic structure coming from the
symplectic structure on $T^*LG$. 
\end{theorem}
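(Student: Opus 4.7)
The plan is to follow the template established by Theorem \ref{thm:main2} combined with the bounded--pole refinement of Theorem \ref{th-j-f}: apply Proposition \ref{prop:mainprop} with $M = LG$, with the nondegenerate symmetric bilinear form $\omega$ of \eqref{e16}, and with the group $\mathscr{G} = G^{k}[[t]]\times L_XG$. Here $L_XG$ acts on $T^*LG$ via the gauge--theoretic action \eqref{e32} (the same action used in the proof of Theorem \ref{thm:main2}), while $G^{k}[[t]]$ acts via the lift to $T^*LG$ of its natural left--translation action on $LG$.

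The first step is to identify the subbundle $\mathcal{U}^{k} \subset T^*LG$ intrinsically as $\omega'\bigl((\mathcal{F}^{\mathscr{G}})^{\perp}\bigr)$, where $\mathcal{F}^{\mathscr{G}} \subset T(LG)$ is the subsheaf of $\mathscr{G}$--orbit directions as in \eqref{e20}. For $\alpha \in LG$ giving a principal $G$--bundle $E_G$ on $X$, the Lie algebra of $\mathscr{G}$ maps into $\mathfrak{g}((t)) \cong T_\alpha LG$ as the sum $t^k\mathfrak{g}[[t]] + H^0(X,\ad(E_G)(\ast p))$. By Theorem \ref{thm:residueann}, the annihilator of $H^0(X,\ad(E_G)(\ast p))$ under the residue pairing is $H^0(X,\ad(E_G)\otimes K_X(\ast p))$; by the argument given in the proof of Lemma \ref{lem4}, the annihilator of $t^k\mathfrak{g}[[t]]$ is $t^{-k}\mathfrak{g}[[t]]dt$. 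The intersection of these two annihilators is precisely the space of meromorphic Higgs fields on $E_G$ with pole at $p$ of order at most $k$, which, via Lemma \ref{lem5} (and the bounded--pole variant of Corollary \ref{cor1}), identifies with the fiber of $\mathcal{U}^{k}$ over $\alpha$.

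Next one must check that the combined action of $\mathscr{G}$ preserves both $\mathcal{U}^{k}$ and the Liouville symplectic form on $T^*LG$. Invariance of $\mathcal{U}^{k}$ under the $L_XG$--action follows from Lemma \ref{lem6} together with the observation that for $g \in L_XG$ the correction term $g^{-1}\mathcal{D}_\alpha(g)$ is regular at $p$, so the conjugation by $g$ preserves the pole--order--$k$ condition on the connection part. Invariance under the left translation by $G^{k}[[t]]$ is essentially the content of the bounded--pole discussion in Lemma \ref{lem4}. The Liouville symplectic form is preserved by the $L_XG$--action by Lemma \ref{lem7}, and by the $G^{k}[[t]]$--action by Lemma \ref{lem2}, since in the latter case the action is simply the lift of a left translation.

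With these verifications in hand, Proposition \ref{prop:mainprop} directly yields a canonical symplectic form on the quotient $\mathcal{U}^{k}/\mathscr{G}$, and the standard uniformization--type identification shows that this quotient is ${\mathcal M}_{Conn}^k(G)$. The main technical obstacle is the bookkeeping needed to confirm that the right--action of $L_XG$ via \eqref{e32} and the left translation by $G^{k}[[t]]$ genuinely commute on $T^*LG$ (and thus define a single action of the product group), and that the combined quotient parametrizes exactly the objects described in the theorem. This is parallel to the analogous verifications made for Theorem \ref{thm:main2} and Theorem \ref{th-j-f}, and no new conceptual ingredients are required beyond those already developed in the paper.
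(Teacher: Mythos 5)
Your proposal follows essentially the same route as the paper: the paper's own proof of Theorem \ref{thm:main3} consists of exactly the observation that $G^{k}[[t]]\times L_XG$ acts on $LG$ and hence on $T^*LG$, followed by an appeal to Proposition \ref{prop:mainprop}, and you have filled in the same identifications (Theorem \ref{thm:residueann} for the $L_XG$--direction, the computation from Lemma \ref{lem4} for the $G^{k}[[t]]$--direction, and the twisted action \eqref{e32} together with Corollary \ref{cor1} to pass from Higgs fields to connections). One small correction to your invariance argument: for general $g\,\in\, L_XG$ the term $g^{-1}{\mathcal D}_\alpha(g)$ is \emph{not} regular at $p$ (it is only regular on $X\setminus\{p\}$, as in Lemma \ref{lem6}); the pole--order--$k$ condition is nevertheless preserved because the right action of $L_XG$ changes only the trivialization over $X\setminus\{p\}$ while leaving the bundle, the connection, and the formal trivialization at $p$ unchanged, so the pole order at $p$ is intrinsic to the point of $\mathcal{U}$ being acted upon.
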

 
\section*{Acknowledgements}

S. Mukhopadhyay acknowledges support of the DAE, Government of India, under Project 
Identification No. RTI4001. I. Biswas is partially supported by a J. C. Bose Fellowship 
(JBR/2023/000003). M. Inaba was supported by JSPS KAKENHI 19K03422 and 22H00094. A. Komyo 
was supported by JSPS KAKENHI 19K14506, 24K06674 and 22H00094. M.-H. Saito was supported by 
JSPS KAKENHI 22H00094.

\end{document}